\date{August~31, 2014}
\newtheorem{dummy}{anything}[section]
\newtheorem{theorem}[dummy]{Theorem}
\newtheorem*{thma}{Theorem A}
\newtheorem*{thmb}{Corollary B}
\newtheorem*{thmc}{Theorem C}
\newtheorem{lemma}[dummy]{Lemma}
\newtheorem{proposition}[dummy]{Proposition}
\newtheorem{corollary}[dummy]{Corollary}
\theoremstyle{definition}
\newtheorem{definition}[dummy]{Definition}
  \newtheorem{example}[dummy]{Example}
  \newtheorem{remark}[dummy]{Remark}
    \newtheorem*{question}{Question}
  \newtheorem*{acknowledgement}{Acknowledgement}
\newcommand
{\eqncount}{\setcounter{equation}{\value{dummy}}%
\addtocounter{dummy}{1}}
\newcommand{\cE}{\mathcal E}
\newcommand{\cF}{\mathcal F}
\newcommand{\cG}{\mathcal G}
\newcommand{\cK}{\mathcal K}
\newcommand{\bC}{\mathbf C}
\newcommand{\bD}{\mathbf D}
\newcommand{\bZ}{\mathbb Z}
\newcommand{\bbZ}{\mathbb Z}
\DeclareMathOperator{\Hom}{Hom}
\DeclareMathOperator{\Image}{im}
 \DeclareMathOperator{\Ext}{Ext}
 \DeclareMathOperator{\Ind}{Ind}
 \DeclareMathOperator{\Inc}{Inc}
\DeclareMathOperator{\Res}{Res}
\DeclareMathOperator{\Iso}{\mathfrak{Iso}}
\DeclareMathOperator{\Supp}{Supp}
\DeclareMathOperator{\Mor}{Mor}
 \DeclareMathOperator{\Map}{Map}
\DeclareMathOperator{\ind}{Ind}
\DeclareMathOperator{\res}{Res}
 \DeclareMathOperator{\Dim}{Dim}
\DeclareMathOperator{\HomDim}{HomDim}
\DeclareMathOperator{\hdim}{hdim}
  \DeclareMathOperator{\rk}{rank}
\newcommand{\snor}{\lhd}
\DeclareMathOperator{\Mod}{Mod}
\DeclareMathOperator{\Def}{Def} 
\newcommand{\RMod}{R\textrm{-Mod}}
 \DeclareMathOperator{\Inf}{Inf}
\newcommand{\cy}[1]{\bZ/{#1}}
\newcommand{\la}{\langle}
\newcommand{\ra}{\rangle}
\newcommand{\vv}{\, | \,}
\newcommand{\bd}{\partial}
\newcommand{\id}{\mathrm{id}}
\def\bZp{\bZ_{(p)}}
\def\G{\varGamma}
\newcommand{\maprt}[1]{\xrightarrow{#1}}
\DeclareMathOperator{\Or}{Or}
\newcommand{\Sub}{{\mathcal S}(G)}
\newcommand\RG{R\G}
\newcommand\ZG{\bZ\G}
\newcommand\OrG{\Or _{\cF}G}
\newcommand\oK{\overline{K}}
\newcommand\oG{\overline{G}}
\newcommand\un{\underline}
\newcommand\uR[1]{R[{#1}^{\, \textbf{?}\,}]}
\newcommand\uRb[1]{R[{#1}\hphantom{{\hskip-4pt}l}^{\textbf{?}\,}]}
\newcommand\uC[1]{\CC({#1}^{\textbf{?}};R)}
\newcommand\uCZ[1]{\CC({#1}^{\textbf{?}};\bZ)}
\DeclareMathOperator{\hd}{hdim}
\newcommand\CC{\bC}
\newcommand\DD{\bD}
\newcommand\uH[1]{H_*({#1}^{\textbf{?}};R)}
\DeclareMathOperator{\Qd}{Qd}
\newcommand{\bigast}{\divideontimes}
\begin{document}

\title{Homotopy Representations over the Orbit Category}
\author{Ian Hambleton}
\author{Erg\"un Yal\c c\i n}

\address{Department of Mathematics, McMaster University,
Hamilton, Ontario L8S 4K1, Canada}

\email{hambleton@mcmaster.ca }

\address{Department of Mathematics, Bilkent University,
06800 Bilkent, Ankara, Turkey}

\email{yalcine@fen.bilkent.edu.tr }

\thanks{Research partially supported by NSERC Discovery Grant A4000. The second author is partially supported by T\" UB\. ITAK-TBAG/110T712.}

\begin{abstract} Let $G$ be a finite group. The unit sphere in a finite-dimensional orthogonal $G$-representation motivates the definition of homotopy representations, due to tom Dieck. We introduce an algebraic analogue and establish its basic properties, including the Borel-Smith conditions and realization by finite $G$-CW-complexes.
\end{abstract}

\maketitle

\section{Introduction}
\label{sect:introduction}

Let $G$ be a finite group. The unit spheres $S(V)$ in  finite-dimensional orthogonal representations of $G$ provide the basic examples of smooth $G$-actions on spheres. Moreover, character theory reveals intricate relations between the dimensions of the fixed subspheres $S(V)^H$, for subgroups $H \leq G$, and the structure of the isotopy subgroups $\{G_x \vv x \in S(V)\}$. Our goal is to  better understand  the constraints on  these basic invariants,  in order to construct new smooth \emph{non-linear} finite group actions on spheres (see \cite{hpy1}, \cite{h-yalcin2}). 

In order to put this problem in a more general setting, tom Dieck \cite[II.10.1]{tomDieck2}
introduced \emph{geometric homotopy representations}, as finite $G$-CW-complexes $X$ with the property that each fixed set $X^H$ is homotopy equivalent to a sphere. In this paper, we study an algebraic version of this notion for $R$-module chain complexes over the orbit category $\G _G =\Or _{\cF} G$, with respect to a ring $R$ and a family $\cF$ of subgroups of $G$. We usually work with $R = \bZp$, for some prime $p$, or $R=\bbZ$. This theory was developed by L\"uck \cite[\S 9, \S 17]{lueck3} and tom Dieck \cite[\S 10-11]{tomDieck2}.
 
The homological dimensions of the various fixed sets are encoded in a conjugation-invariant function  
$\un{n} \colon \Sub \to \bbZ$, where $\Sub$ denotes the set of subgroups of $G$. The function $\un{n}$ is \emph{supported on the family $\cF$},  if  $\un{n}(H) = -1 $ for $H \notin \cF$ (see Definition \ref{defn:superclass}). 
We say that a finite projective chain complex $\CC$ over $\RG_G$ is an \emph{$R$-homology $\un{n}$-sphere} if 
 the reduced homology of $\CC (K)$ is the same as the reduced homology of an $\un{n}(K)$-sphere (with coefficients in $R$) for all $K \in \cF$.

If $\CC$ is an $R$-homology $\un{n}$-sphere, which satisfies the internal homological conditions observed for representation spheres (see Definition \ref{def:algrep}), then we say that $\CC$ is an  \emph{algebraic homotopy representation}. By \cite[II.10]{tomDieck2}, these conditions are all necessary for $\CC$ to be chain homotopy equivalent to a geometric homotopy representation. In Proposition \ref{prop:tightness conditions},  we show more generally that these conditions hold for $\CC$ an $R$-homology $\un{n}$-sphere, whenever $\un{n}=\Dim\CC$, where $\Dim \CC$ denotes the chain dimension function of $\CC$. When this equality holds, we say that $\CC$ is a \emph{tight} complex.
 
In general, $\un{n}(K) \leq \Dim \CC(K)$ for each $K \in \cF$, and one would expect obstructions to finding a tight complex which is chain homotopy equivalent to a given $R$-homology $\un{n}$-sphere. Our first main result shows the relevance of the internal homological conditions for this question. 
 
\begin{thma}\label{thm:mainthm} Let  $G$ be a finite group, and  $\cF$ be a family of subgroups of $G$.  Suppose that 
 \begin{enumerate}
\item $R$ is a principal ideal domain,
\item $\un{n}\colon \Sub \to \bZ$ is a conjugation-invariant function supported on $\cF$, and
\item $\CC$ is a finite chain complex of free $\RG_G$-modules which is an
$R$-homology $\un{n}$-sphere.
\end{enumerate}
Then $\CC$ is chain homotopy equivalent to a finite
free chain complex $\DD$ satisfying $\un{n}=\Dim \DD$ if and only if $\CC$ is an algebraic homotopy representation. 
\end{thma}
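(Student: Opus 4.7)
The forward direction is essentially immediate. If $\CC \simeq \DD$ and $\un{n}=\Dim\DD$, then $\DD$ is tight, so by Proposition~\ref{prop:tightness conditions} it satisfies the internal homological conditions of Definition~\ref{def:algrep}. Since those conditions are phrased in terms of the reduced homology modules $\widetilde{H}_*(\CC(K);R)$, which are chain homotopy invariants, the property transfers back to $\CC$.

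For the converse, my plan is to construct $\DD$ by an inductive cell-cancellation procedure indexed by the conjugacy classes of subgroups in $\cF$. Fix an ordering $[K_1],\ldots,[K_N]$ of the conjugacy classes in $\cF$ in which $K_i$ is not subconjugate to $K_j$ when $i>j$, so that the largest isotropy types are processed first, and set $\cF_j = \{[K_1],\ldots,[K_j]\}$. Proceeding by induction on $j$, I would maintain a finite free $\RG_G$-complex $\DD^{(j)}$ chain homotopy equivalent to $\CC$ with $\Dim \DD^{(j)}(K_i) = \un{n}(K_i)$ for all $i \leq j$. The inductive step from $j-1$ to $j$ proceeds by analyzing the relative complex at $K=K_j$ obtained from $\DD^{(j-1)}(K)$ by quotienting out the contributions of strictly larger isotropy; this is a finite free chain complex over $R[W_G K]$, where $W_G K = N_G(K)/K$. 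The combination of the algebraic homotopy representation hypothesis and the sphere homology of $\CC(K)$ forces the homology of this relative complex to be concentrated in a single degree with an $R[W_G K]$-projective kernel/image structure, which, over a PID, permits cancellation of free summands until the chain dimension is exactly $\un{n}(K)$. This local cancellation is then induced up along the natural inclusion of orbit categories and spliced into $\DD^{(j-1)}$ to produce $\DD^{(j)}$.

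The main obstacle is executing the cancellation equivariantly over the full orbit category without disturbing the tightness already achieved at larger isotropy types, and in a way that still delivers a chain homotopy equivalence with $\CC$. Concretely, one has to track how free generators of the local complex at level $K_j$ contribute to $\DD^{(j-1)}(L)$ for all $L \leq K_j$ in $\cF$, and verify that the proposed reduction of free $R[W_G K_j]$-summands can be realized by an $\RG_G$-chain map which is the identity on the already-tight part at strictly larger isotropy. This is precisely where the internal homological conditions of Definition~\ref{def:algrep} earn their keep: they provide the local projectivity and vanishing data needed for the successive cancellations to be compatible with the orbit-category structure. Once this compatibility is established, the finiteness of $\cF$ ensures that after $N$ steps one arrives at a tight complex $\DD = \DD^{(N)}$ chain homotopy equivalent to $\CC$, completing the proof.
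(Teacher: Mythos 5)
Your overall strategy matches the paper's: prove the forward direction via Proposition~\ref{prop:tightness conditions} plus chain-homotopy invariance, and prove the converse by downward induction on conjugacy classes of isotropy types, cancelling free summands in the isotypic quotient (the paper's $\CC_{(H)} = \CC^{(H)}/\CC^{>(H)}$, whose value at $H$ is the splitting functor $S_H\CC$) and splicing the result back via pullback/pushout.

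The gap is in the inductive step, where you assert that the algebraic homotopy representation hypothesis ``forces the homology of this relative complex to be concentrated in a single degree.'' That concentration is exactly the statement $H_{n+1}(S_H\CC)=0$ with $n=\un{n}(H)$, and it is not at all automatic: $\dim\CC_{(H)}$ can exceed $n$, and the short exact sequence
$0 \to \CC^{>(H)}(H) \to \CC(H) \to S_H\CC \to 0$
only gives vanishing of $H_d(S_H\CC)$ for $d > n+1$ for free (since $\dim\CC^{>(H)}\leq n$ by the inductive hypothesis of tightness above $H$). Establishing $H_{n+1}(S_H\CC)=0$ is where conditions (ii) and (iii) of Definition~\ref{def:algrep} are really used, and it requires a genuine argument: one must show that the subcomplex $\CC^{>(H)}(H) = \sum_{K\in\cK_H}\CC^K_H$, a union of the images of the restriction maps from those $K>H$ sharing the same homological dimension, carries the full top homology of $\CC(H)$. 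The paper proves this via a Mayer--Vietoris induction (Lemma~\ref{lem:tightness} and Corollary~\ref{cor:vanishing}), using condition~(ii) to identify the top homology of each $\CC^K_H$ and condition~(iii) both to guarantee the intersections $\CC^{K_i}_H\cap\CC^{K_j}_H$ are themselves of the form $\CC^{\langle K_i,K_j\rangle}_H$ with $\langle K_i,K_j\rangle\in\cK_H$, and to provide the unique maximal element $M$ that anchors the comparison. Your sketch never makes this sum-of-images structure visible, nor does it explain how the join condition (iii) keeps the Mayer--Vietoris induction inside $\cK_H$; without these ingredients the claimed ``local projectivity and vanishing data'' is unjustified. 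I recommend you isolate the statement $H_{n+1}(S_H\CC)=0$ as a separate lemma and prove it before attempting the cancellation.
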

  
Theorem A was motivated by \cite[Theorem 8.10]{hpy1}, which states that a finite chain complex of free $\bbZ\G_G$-modules can be realized by a geometric $G$-CW-complex if it is a tight homology $\un{n}$-sphere such that $\un{n} (H) \geq 3$ for all $H \in \cF$.  Upon combining these two statements, we get the following geometric realization result.

\begin{thmb}\label{cor:maincor}
Let $\CC$ be a finite  chain complex of free $\ZG_G$-modules which is a
homology $\un{n}$-sphere. 
If $\CC$ is an  algebraic homotopy representation, 
and  in addition, if $\un{n}
(K) \geq 3$ for all $K \in \cF$, then there is a finite $G$-CW-complex $X$, with isotropy in $\cF$, such that $\uCZ{X}$ is chain homotopy equivalent to
$\CC$ as chain complexes of $\ZG_G$-modules. 
\end{thmb}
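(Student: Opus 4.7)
The plan is to derive Corollary B as a direct two-step combination of Theorem A with the cited realization result \cite[Theorem 8.10]{hpy1}. First I would verify that the hypotheses of Theorem A are met in our situation: the coefficient ring $R = \bbZ$ is a principal ideal domain, the function $\un{n}\colon \Sub \to \bbZ$ is (implicitly) conjugation-invariant and supported on $\cF$ since $\CC$ is a homology $\un{n}$-sphere over $\Or_{\cF} G$, and $\CC$ is a finite free chain complex of $\ZG_G$-modules that is a homology $\un{n}$-sphere. Since $\CC$ is assumed to be an algebraic homotopy representation, Theorem A produces a finite free chain complex $\DD$ of $\ZG_G$-modules which is chain homotopy equivalent to $\CC$ and satisfies $\un{n} = \Dim \DD$. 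In other words, $\DD$ is a \emph{tight} homology $\un{n}$-sphere.

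Next I would apply \cite[Theorem 8.10]{hpy1} to $\DD$. By the dimension hypothesis $\un{n}(K) \geq 3$ for every $K \in \cF$, and because $\DD$ is tight and free, that theorem yields a finite $G$-CW-complex $X$ with isotropy contained in $\cF$ such that $\uCZ{X}$ is chain homotopy equivalent to $\DD$ as a chain complex of $\ZG_G$-modules. Composing the two chain homotopy equivalences
\[
\uCZ{X} \simeq \DD \simeq \CC
\]
gives the required geometric realization of $\CC$ up to chain homotopy.

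The proof is therefore essentially bookkeeping once Theorem A and \cite[Theorem 8.10]{hpy1} are in hand; no substantial obstacle remains. The only point meriting care is the compatibility of the two inputs: Theorem A guarantees that the intermediate complex $\DD$ is both finite \emph{and} free over $\ZG_G$ (not merely projective), which is precisely the hypothesis needed to feed $\DD$ into the geometric realization theorem of \cite{hpy1}. If one wished to be entirely explicit, one could also remark that the dimension condition $\un{n}(K) \geq 3$ ensures the connectedness and simple-connectedness of the fixed-point data required in the CW-realization, so no additional low-dimensional adjustments are necessary.
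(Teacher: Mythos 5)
Your proposal is correct and matches the paper's own proof: apply Theorem A to replace $\CC$ by a chain-homotopy-equivalent finite free tight complex $\DD$ with $\Dim \DD = \un{n}$, and then apply \cite[Theorem 8.10]{hpy1} to $\DD$, noting that the tightness condition $\DD_i(H)=0$ for $i>\un{n}(H)$ is stronger than the hypothesis $\DD_i(H)=0$ for $i>\un{n}(H)+1$ required there. The paper simply observes that tightness plus $\un{n}(K)\geq 3$ automatically supplies the hypotheses of the realization theorem, which is exactly the compatibility check you make explicit.
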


We are  interested in constructing finite $G$-CW-complexes with some restrictions on the family of isotropy subgroups. We say a $G$-CW-complex $X$ has \emph{rank one isotropy} if for every $x\in X$, the isotropy subgroup $G_x$ has $\rk G_x \leq 1$. Recall that \emph{rank} of a finite group $G$ is defined as the largest integer $k$ such that $(\bbZ /p)^k \leq G$ for some prime $p$. We will use Theorem A and Corollary B to study the following: 

\begin{question} Which finite groups $G$ admit a finite $G$-CW-complex $X$ 
with rank one isotropy, such that $X$ is homotopy equivalent to a sphere~?   
\end{question}

One motivation for this work is that rank one isotropy examples lead to  free $G$-CW-complex actions of  finite groups on \emph{products} of spheres (see  Adem and Smith \cite{adem-smith1}).

In \cite{hpy1} we gave the first non-trivial example, by constructing  a finite $G$-CW-complex $X \simeq S^n$ for the symmetric group $G=S_5$, with cyclic $2$-group isotropy. However, the arguments used special features of the isotropy family. 
 Corollary B now provides an effective general method for the geometric realization of algebraic models. The  algebraic homotopy representation conditions are easy to check locally over $R= \bZp$ at each prime, and fit well with the local-to-global procedure for constructing chain complexes $\CC$ over $\ZG_G$. In a sequel \cite{h-yalcin2} to this paper, we apply Corollary B to construct infinitely many new examples  with rank one isotropy, 
 for certain interesting families of rank two groups.

In Section \ref{sect:Borel-Smith} we consider the algebraic version of a well-known theorem in transformation groups:  the  dimension function of a homotopy representation satisfies certain conditions called the Borel-Smith conditions (see Definition \ref{def:Borel-Smith}). 

\begin{thmc}\label{thm:IntroBorel-Smith}
Let $G$ be a finite group, 
$R=\bZ/p$, and $\cF$ be a given family of subgroups of $G$. If $\CC$ is a finite projective chain complex over $\RG _G$, which is an $R$-homology
$\un{n}$-sphere, then the function $\un{n}$ satisfies the Borel-Smith conditions at the prime $p$.  
\end{thmc}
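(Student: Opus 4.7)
The Borel-Smith conditions at prime $p$ are local conditions on triples of subgroups $H\snor K\snor L\le G$ with $L/H$ an elementary abelian $p$-group of rank at most two, comprising the classical Smith parity relation (rank one, $p$ odd), the Borel additivity formula (rank two), and sign-twisted variants at $p=2$ involving normalizer actions. The plan is to verify each condition by applying Smith theory to the $L/H$-action on $\CC(H)$, exploiting the functorial subcomplex $\CC(L)\hookrightarrow\CC(H)$ provided by the orbit-category structure of $\CC$.

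The central algebraic input is the following. For the free $\RG_G$-module $R[\Mor(-,G/L_0)]$, the $G$-map $G/H\to G/L$ (for $H\le L$) induces on evaluation the inclusion $R[(G/L_0)^L]\hookrightarrow R[(G/L_0)^H]$ of the subspace spanned by $L/H$-fixed basis elements, with quotient $R[(G/L_0)^H\setminus(G/L_0)^L]$ a free $R[L/H]$-module (since every non-fixed $L/H$-orbit on $(G/L_0)^H$ has cardinality $p$). Because every projective $R[\bZ/p]$-module is free---the group ring $R[\bZ/p]$ being local over $R=\bZ/p$---this structure survives under projective summands, so for any finite projective $\RG_G$-chain complex $\CC$ there is a short exact sequence of $R[L/H]$-chain complexes
\[
0\longrightarrow\CC(L)\longrightarrow\CC(H)\longrightarrow\CC(H)/\CC(L)\longrightarrow 0
\]
whose quotient is a chain complex of free $R[L/H]$-modules.

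For $L/H\cong\bZ/p$, the associated long exact sequence in homology, combined with $\tilde H_*(\CC(H);R)\cong\tilde H_*(S^{\un n(H)};R)$ and $\tilde H_*(\CC(L);R)\cong\tilde H_*(S^{\un n(L)};R)$ and the standard Smith-sequence analysis of the free quotient, yields $\un n(L)\le\un n(H)$ and the Smith parity $\un n(L)\equiv\un n(H)\pmod 2$ for $p$ odd. For $L/H\cong(\bZ/p)^2$, applying the classical Borel formula to the resulting lattice of fixed-point subcomplexes---combined with the rank-one Smith parities at each intermediate order-$p$ subgroup $K_i/H$---produces the rank-two additivity relation between $\un n(H)$, $\un n(L)$, and the $\un n(K_i)$. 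The sign-twisted conditions at $p=2$ follow by tracking the action of $N_G(H)/L$ on the nontrivial element of $L/H$.

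I expect the main obstacle to lie in justifying the short exact sequence displayed above: one must verify that $\CC(L)\hookrightarrow\CC(H)$ is a subcomplex with $R[L/H]$-free quotient for every projective $\CC$, which rests on the orbit-category calculation for free modules together with the local-ring structure of $R[\bZ/p]$. Once this is secured, the derivation of the Borel-Smith conditions proceeds via the classical Smith-theoretic and Borel-formula arguments essentially verbatim.
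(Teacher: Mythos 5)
Your strategy is genuinely different from the paper's: you propose to port the classical chain-level Smith-theory and Borel-formula arguments directly into $\CC(H)$, using the observation that for a projective $R\G_G$-module $P$ the map $r^L_H\colon P(L)\to P(H)$ identifies $P(L)$ with the $L/H$-fixed submodule and that $P(H)/P(L)$ is $R[L/H]$-free when $L/H\cong\bZ/p$ (free orbits plus local-ring projective-implies-free). The paper instead restricts and deflates to a finite projective complex over $R\G_{N/L}$ for the relevant subquotient (Section~\ref{sec:deflation}) and then runs the hypercohomology spectral sequence $E_2^{s,t}=\Ext^s_{R\G}(H_t(\CC),I_1R)$, extracting the congruences from the periodicity and ring structure of $H^*(\cy p;R)$, $H^*(\cy p\times\cy p;R)$, $H^*(\cy 4;R)$, $H^*(Q_8;R)$. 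Your route to conditions (i) and (ii) is plausible and would work once made precise, and it also recovers the monotonicity $\un n(L)\le \un n(H)$ which the paper proves separately in Lemma~\ref{lem:monotone2}.

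The genuine gap is your treatment of conditions (iii) and (iv). You describe the Borel--Smith conditions as ``local conditions on triples $H\snor K\snor L$ with $L/H$ an elementary abelian $p$-group of rank at most two,'' with the $p=2$ cases being ``sign-twisted variants involving normalizer actions.'' This is not what conditions (iii) and (iv) say. They concern a chain $L\snor K\snor N$ with $K/L\cong\cy 2$ but $N/L\cong \cy 4$ or $N/L\cong Q_8$ --- subquotients that are not elementary abelian --- and conclude that $f(L)-f(K)$ is divisible by $2$, respectively by $4$. The content is the period of the mod-$2$ cohomology of $\cy 4$ and $Q_8$ (modulo nilpotents these rings are polynomial on a degree-$2$, respectively degree-$4$, generator), not a sign twist on a rank-two Borel formula. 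Your proposed ``tracking the action of $N_G(H)/L$'' does not engage with this structure, and the claim that the classical argument runs ``essentially verbatim'' is not justified for these two cases. You would need to separately handle the $R[\cy 4]$- and $R[Q_8]$-module structure on the free complement complex, which is substantively harder than the $\cy p$ Smith sequence and is exactly what the paper's Lemmas~\ref{lem:Case (iii)} and \ref{lem:Case (iv)} do via the Ext calculation. A secondary omission: since the Borel--Smith conditions quantify over arbitrary $L\snor K\snor N\le G$, not just subgroups in $\cF$, you need to extend $\cF$ to all subgroups (setting $\CC(H)=0$ outside $\cF$) before any of the restriction/deflation or Smith-theoretic reductions make sense; the paper does this explicitly at the start of its proof.
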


As an application, we show that such a finite projective chain complex over $\RG_G$ does not exist for the group $G=\Qd(p)$ with respect to the family $\cF$ of rank 1 subgroups
(see Example \ref{ex:Qdp} and Proposition \ref{pro:algQdp}). This is an important group theoretic constraint  on the existence question for geometric homotopy representations with rank one isotropy.

One of the main ideas in the proof of Theorem C is the reduction of a given chain complex of $R\G_G$-module $\CC$ to a chain complex over $R\G_{K/N}$ for a subquotient $K/N$ 
appearing in the Borel-Smith conditions. For this reduction, we introduce \emph{inflation} and \emph{deflation} of modules over the orbit category, via restriction and induction associated to a certain functor $F$ (see Section \ref{sec:deflation}). Then we use spectral sequence arguments to conclude that the conditions given in the Borel-Smith conditions hold for these reduced chain complexes over $R\G _{K/N}$.

 Here is a brief outline of the paper. 
In Section \ref{sect:AlgHomRep} we give the precise setting and background definitions for
the concepts just presented (see Definition \ref{def:algrep}) and prove the  ``only if" direction of Theorem A. The ``if" direction of Theorem A is proved in Section \ref{sec:thma}, together with Corollary B. In Section \ref{sect:Borel-Smith}  we discuss the Borel-Smith conditions and prove Theorem C.

Our methods involve the study of finite-dimensional chain complexes of finitely generated projective modules over the orbit category, called \emph{finite projective chain complexes}, for short. Such chain complexes are the algebraic analogue of finitely-dominated $G$-CW complexes.

\begin{acknowledgement}  We thank the referees for their helpful comments and suggestions.
\end{acknowledgement}

\section{Algebraic homotopy representations}
\label{sect:AlgHomRep}

Let $G$ be a finite group and $\cF$ be a family of subgroups of $G$ which is closed under conjugations and taking subgroups. 
The orbit category $\OrG$ is defined as the category whose objects are orbits of type
$G/K$, with $K \in \cF$, and where the morphisms from $G/K$ to $G/L$
are given by $G$-maps:
$$ \Mor _{\OrG} (G/K , G/L ) = \Map _G (G/K, G/L ).$$
The category $\G _G= \OrG$ is a small category, and we can consider the
module category over $\G _G$ in the following sense. Let $R$ be a commutative ring with 1.
A \emph{(right) $\RG _G$-module} $M$ is a contravariant functor from $\G _G$ to
the category of $R$-modules. We denote the $R$-module $M(G/K)$
simply by $M(K)$ and write $M(f)\colon  M(L) \to M(K)$  for a $G$-map $f\colon 
G/K \to G/L$.  

The category of $\RG _G$-modules is an abelian category, so the usual concepts of homological algebra, such as kernel, direct sum,
exactness, projective module, etc., exist for $\RG_G$-modules.
A sequence of $\RG _G$-modules $0 \to A \to B \to C \to 0$
is \emph{exact} if and only if $$ 0 \to A(K) \to B(K) \to C(K) \to 0$$ is
an exact sequence of $R$-modules for every $K \in \cF$. For an $\RG_G$-module $M$ the $R$-module $M(K)$ can also be considered as an $RW_G(K)$-module in an obvious way where $W_G(K) = N_G(K)/K$. We will follow the convention in \cite{lueck3} and  consider $M(K)$ as a right $RW_G(K)$-module. In particular, we will consider the sequence above as an exact sequence of right $RW_G(K)$-modules.

For each $H \in \cF$, let $F_H:=R[G/H ^?]$ denote the $R\G_G$-module with values $F_H (K)=R[(G/H)^K]$ for every $K \in \cF$, and where for every  $G$-map $f: G/L \to G/K$, the induced map $F_H (f): R[(G/H)^K]\to R[(G/H)^L]$ is defined in the obvious way. By the Yoneda lemma, there is an isomorphism $$\Hom _{R\G_G} (R[G/H^?], M)\cong M(H)$$ for every $R\G_G$-module $M$. From this it is easy to show that the module $R[G/H^?]$ is a projective module in the usual sense, for each $H \in  \cF$. An $R\G_G$-module is called \emph{free} if it is isomorphic to a direct sum of $R\G_G$-modules of the form $R[G/H^?]$. It can be shown that an $R\G_G$-module is projective if and only if it is a direct summand of a free module. The further details about the properties of modules over the orbit category can be found in \cite{hpy1} (see also L\"uck \cite[\S 9,\S 17]{lueck3} and tom Dieck \cite[\S 10-11]{tomDieck2}).   

In this section we consider chain complexes $\CC$ of $\RG _G$-modules, with respect to a given family $\cF$. When we say a chain complex we always mean a non-negative complex, so $\CC _i=0$ for $i<0$. We call a chain complex $\CC$ \emph{projective} (resp.~\emph{free}) if for all $i\geq 0$,
the modules $\CC_i$ are projective (resp.~free). We say that a chain complex $\CC$ is \emph{finite} if $\CC_i = 0$ for all $i > n$, for some $n\geq 0$, and the chain modules $\CC_i$ are all finitely generated $\RG_G$-modules. 

\begin{remark} Up to chain homotopy equivalence, there is no difference between finite projective 
chain complexes and finite-dimensional projective chain complexes with  finitely generated homology (see \cite[3.6]{h-yalcin2}). For this reason, our definitions and results are mostly stated for \emph{finite} chain complexes. 
\end{remark}

We define the \emph{support} of a chain complex $\CC$ over $R\G_G$ as the family of subgroups $$\Supp(\CC) = \{ H \in \cF \vv \CC(H) \neq 0\}.$$

It is sometimes convenient to vary the family of subgroups. 

\begin{definition}
If $\cF \subset \cG$ are two families, the orbit category $\G _{G, \cF}=\Or_\cF G$ is a full-subcategory of $\Gamma _{G, \cG}=\Or_\cG G$. If $M$ is a module over $R\G_{G, \cF}$, then we define
$\Inc_\cF^\cG(M)(H) =M(H)$, if $H \in \cF$, and zero  otherwise. 
Similarly, 
for a module $N$ over $R\G_{G, \cG}$,  define $\res_\cF^\cG(N)(H) = N(H)$, for $H \in \cF$. We extend to maps and chain complexes similarly. Note that $\Supp(\Inc_\cF^\cG(\CC)) = \Supp(\CC)$, and $\Supp(\res_\cF^\cG(\DD)) = \Supp(\DD) \cap \cF$.  
\end{definition}

 Given a $G$-CW-complex $X$, 
there is an associated chain complex of $\RG_G$-modules over the family of all subgroups
$$\uC{X}:   \quad  \cdots \to \uR{X_n}
\xrightarrow{\bd_{n}}  \uR{X_{n-1}} \rightarrow
\cdots\xrightarrow{\bd_{1}}  \uR{X_0} \to 0$$ where $X_i$ denotes
the set of (oriented) $i$-dimensional cells in $X$ and $\uR{X_i}$ is the
$\RG_G$-module defined by $ \uR{X_i}(H)= R[X_i^H]$  for every $H \leq G$. We denote the
homology of this complex by $\uH{X}$. The chain complex $\CC (X^H; R)$ is actually defined for all subgroups $H \leq G$, but for a given family of subgroups $\cF$, we can restrict its values from $\Or(G)$ to the full sub-category $\OrG$. 

The smallest family containing all the isotropy subgroups  $\{G_x \vv x \in X\}$ is 
$$\Iso(X) = \{ H \leq G \vv X^H \neq \emptyset\}$$
and this motivates our notion of support for algebraic chain complexes.
In particular, we have 
$$ \Supp(\res_\cF(\uC{X})) = \cF \cap \Iso(X). $$  
If the family $\cF$ includes all the isotropy subgroups of $X$, then the complex $\uC{X}$ is a chain complex of free $\RG_G$-modules, hence projective $\RG_G$-modules, but otherwise the chain modules $\uR{X_n}$ may not be projective over $\RG _G$.

Given a finite-dimensional $G$-CW-complex $X$,  there is a
\emph{dimension function} $$\Dim X \colon  \Sub  \to \bZ,$$  given by $(\Dim X )(H)=\dim X^H$ for all $H \in \Sub$ where $\Sub$ denote the set of all subgroups of $G$. By convention,  we set $\dim \emptyset = -1$ for the dimension of the empty set.
In a similar way, we define the following.

\begin{definition}\label{defn:Dim} The (chain) \emph{dimension function} of a finite-dimensional chain complex $\CC$ over $\RG_G$ is defined as the function $\Dim \CC \colon  \Sub \to \bZ$ which has the value
 $$(\Dim \CC )(H)=\dim \CC (H)$$ for all 
 $H \in \cF$, 
 where the \emph{dimension} of a chain complex of $R$-modules is defined as the largest integer $d$ such $C_d\neq 0$ (hence the zero complex has dimension $-1$). If 
$H \notin \cF$, then we set $(\Dim \CC)(H)=-1$. 
\end{definition}

The dimension function $\Dim \CC \colon \Sub \to \bZ$ is  \emph{conjugation-invariant}, meaning that it takes the same value on conjugate subgroups of $G$. The term \emph{super class function} is often used for such functions. 

\begin{definition}\label{defn:superclass}
 The \emph{support} of a super class function $\un{n}$ is defined as the set 
$$\Supp (\un{n})=\{ H \leq G \colon \un{n} (H) \neq -1 \}.$$
We say that a super class function $\un{n}\colon \Sub  \to \bZ$ is \emph{supported on $\cF$}, if  $\Supp(\un{n}) \subseteq \cF$. Note that $\Supp (\CC)\subseteq \cF$ is the support of the dimension function $\Dim\CC$ of a chain complex $\CC$ over $\RG_G$. 
\end{definition}

In a similar way, we can define the \emph{homological dimension function} of a chain complex $\CC$ of $\RG_G$-modules
as the function $\HomDim \CC \colon  \Sub \to \bZ$ where for each $H\in \cF$, the integer 
$$(\HomDim \CC ) (H) = \hdim \CC(H)$$
is defined as the largest integer $d$ such that  $H_d (\CC (H))\neq 0$. If $H \notin \cF$, then we set $\un{n}(H) = -1$, as before.

Let us write $(H) \leq (K)$ whenever $g^{-1} Hg \leq K$ for some $g\in G$. Here $(H)$ denotes the set of subgroups conjugate to $H$ in $G$. The notation $(H) < (K)$ means that $(H)\leq (K)$ but $(H)\neq (K)$.

\begin{definition}\label{def:monotone}
We call a function $\un{n}\colon \Sub \to  \bZ$ \emph{monotone} if it
satisfies the property that $\un{n}(H) \geqslant \un{n}(K)$ whenever
$(H) \leq (K)$. We say that a monotone function  $\un{n}$ is
\emph{strictly monotone} if $\un{n}(H) > \un{n}(K)$, whenever $(H)< (K)$.  \qed 
\end{definition} 

We have the following:

\begin{lemma}\label{lem:monotone} The (chain) 
dimension function of every finite-dimensional projective chain complex $\CC$ of $\RG_G$-modules is monotone.
\end{lemma}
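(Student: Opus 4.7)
The plan is to combine conjugation-invariance of $\Dim\CC$ with a componentwise injectivity statement: the functor $\CC_n$ sends any subgroup inclusion $H \leq K$ to an injection $\CC_n(K) \hookrightarrow \CC_n(H)$. I would proceed in three short steps.

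First, observe that $\Dim\CC$ is a super class function. Indeed, conjugate subgroups $K$ and $g^{-1}Kg$ give isomorphic objects $G/K$ and $G/(g^{-1}Kg)$ in $\G_G$, hence $\CC(K)\cong \CC(g^{-1}Kg)$ as chain complexes of $R$-modules and the two chain dimensions agree. Consequently the inequality $(H)\leq (K)\Rightarrow \Dim\CC(H)\geq \Dim\CC(K)$ reduces, after conjugating, to the case of an actual inclusion $H\leq K$ with both subgroups in $\cF$.

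Second, I would establish the key module-level statement: for every projective $R\G_G$-module $P$ and every inclusion $H\leq K$ in $\cF$, the $R$-module map $P(K)\to P(H)$ induced by the canonical $G$-projection $p\colon G/H\to G/K$ is injective. For a representable generator $M=R[G/L^?]$, the Yoneda description identifies $M(p)$ with the $R$-linear extension of the inclusion of fixed-point sets $(G/L)^K\hookrightarrow (G/L)^H$, which is injective since any $K$-fixed coset in $G/L$ is automatically $H$-fixed when $H\leq K$. Direct sums preserve injectivity, so the statement passes to every free $R\G_G$-module. For an arbitrary projective $P$, choose $Q$ with $F=P\oplus Q$ free; then the map $F(p)$ splits as the direct sum of $P(p)$ and $Q(p)$, and injectivity of the sum forces injectivity of each summand.

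Third, apply this componentwise. Setting $d=\Dim\CC(K)=\dim\CC(K)$, we have $\CC_d(K)\neq 0$, and since $\CC_d$ is projective the previous step gives $\CC_d(H)\neq 0$, so $\Dim\CC(H)\geq d = \Dim\CC(K)$. The argument is essentially elementary; the only place where genuine care is needed---and what I would identify as the main technical point---is the passage from free to projective modules, which is not automatic from the Yoneda computation and requires the direct-summand observation above.
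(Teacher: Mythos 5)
Your proof is correct, and it takes a genuinely different route from the paper's. The paper's proof of this lemma invokes the structural decomposition theorem for projective $\RG_G$-modules (tom Dieck~\cite[Chap.~I, Theorem 11.18]{tomDieck2}): every projective $P$ is of the form $\bigoplus_H E_H P_H$, and one reads off which summands can contribute to $\CC_n(K)$ and notices those same summands are nonzero at $L$ whenever $(L)\leq (K)$. Your approach replaces that structural theorem with a componentwise injectivity statement: restriction maps $P(K)\to P(H)$ are injective for projective $P$, proved by linearizing the inclusion of fixed-point sets $(G/L)^K\hookrightarrow(G/L)^H$ for a free generator and then passing through direct sums and direct summands. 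What you have effectively done is reprove the paper's later Lemma~\ref{lem:resmap} and derive monotonicity as a corollary of it; the paper instead proves monotonicity independently first and establishes the injectivity of restriction maps afterward. Your route is more elementary (it avoids the decomposition theorem entirely) and arguably cleaner, since it isolates the one fact actually doing the work; the paper's proof buys familiarity with the $E_H P_H$ decomposition, which it needs again in Lemma~\ref{lem:intermediate}. One small point you handled implicitly but should state: when $K\notin\cF$, $\Dim\CC(K)=-1$ and the inequality is vacuous, and since $\cF$ is closed under subgroups and conjugation, $(H)\leq(K)$ with $K\in\cF$ forces $H\in\cF$, so the reduction to an honest inclusion of subgroups in $\cF$ is indeed safe.
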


\begin{proof} Let $(L)\leq (K)$. If $\un{n}(K)=-1$, then the inequality $\un{n}(L) \geq \un{n}(K)$ is clear. So assume $\un{n}(K)=n \neq -1$. Then $\CC_n (K) \neq 0$. By the decomposition theorem for projective $\RG_G$-modules \cite[Chap.~I, Theorem 11.18]{tomDieck2}, every projective $\RG _G$-module  $P$ is of the form $P\cong \oplus _H E_H P_H$, where $H\in \cF$ and $P_H$ is a projective $N_G (H)/H$-module. Here the $R\G _G$-module $E_HP_H$ is defined by $$E_H P_H (?)=P_H \otimes _{RN_G(H)/H} RMap_G(G/?, G/H).$$ Applying this decomposition theorem to $\CC _n$, we observe that $\CC_n$ must have a summand $E_H P_H$ with $(K)\leq (H)$. But then $\CC_n(L)\neq 0$, and hence $\un{n}(L) \geq \un{n}(K)$.  
\end{proof}

We are particularly interested in chain complexes which have the homology of a sphere when evaluated 
at every $K \in \cF$. To specify the restriction maps in dimension zero, we will consider chain complexes $\CC$ which are equipped with  an augmentation map $\varepsilon\colon \CC _0 \to \un{R}$ such that $\varepsilon \circ \bd _1=0$. Here $\un{R}$ denotes the constant functor, and we assume that $\varepsilon(H)$ is surjective for $H \in \Supp(\CC)$.
We often consider $\varepsilon$ as a chain map $\CC \to \un{R}$ by considering $\un{R}$ as a chain complex over $\RG_G$ which is concentrated at zero. We denote a chain complex with an augmentation as a pair $(\CC, \varepsilon)$.

By the \emph{reduced homology} of a complex $(\CC,\varepsilon)$, we always mean the homology of the augmented chain complex 
$$ \widetilde \CC = \{ \cdots \to \CC_n \xrightarrow{\bd_n} \cdots  \to \CC _2 \xrightarrow{\bd_2} \CC _1\xrightarrow{\bd _1}
 \CC _0\xrightarrow{\varepsilon} \un{R} \to 0\}$$ 
where $\un{R}$ is considered to be at dimension $-1$. Note that the complex $\widetilde  \CC$ is the $-1$ shift of the mapping cone of the chain map $\varepsilon \colon \CC \to \un{R}$.    

\begin{definition}\label{def:Rhomologysphere}
 Let $\un{n}$ be a super class function supported on $\cF$, and let $\CC$ be a chain complex over $\RG_G$ with respect to a family $\cF$ of subgroups.  
 \begin{enumerate}
\item  We say that $\CC$ is an \emph{$R$-homology $\un {n}$-sphere} if there is an augmentation map $\varepsilon\colon \CC \to \un{R}$ such that the reduced homology of $\CC (K)$ is the same as the homology of an $\un{n}(K)$-sphere (with coefficients in $R$) for all $K \in \cF$. 
\item We say that $\CC$ is \emph{oriented} if the $W_G
(K)$-action  on the homology of $\CC (K)$ is trivial for all $K\in \cF$. 
\end{enumerate}
\end{definition}

Note
that we do not assume that the dimension function is strictly
monotone as in Definition II.10.1 in \cite{tomDieck2}. 

In transformation group theory, a $G$-CW-complex $X$ is called a \emph{homotopy representation} if $X^H$ is homotopy equivalent to the sphere $S^{\un{n}(H)}$ where $\un{n}(H)=\dim X^H$ for every $H\leq G$ (see tom Dieck  \cite[Section II.10]{tomDieck2}). We now introduce an algebraic analogue of this useful notion for chain complexes over the orbit category.  

In \cite[II.10]{tomDieck2}, there is a list of properties that are satisfied by homotopy representations. We will use algebraic versions of these properties to define an analogous notion for chain complexes.

\begin{definition}\label{def:algrep} Let $\CC$ be a finite 
projective  chain complex over $\RG_G$, which is an $R$-homology $\un{n}$-sphere.
We say $\CC$ is an \emph{algebraic homotopy representation} (over $R$) if 
\begin{enumerate}
\item The function $\un{n}$ is a monotone
function.
\item If $H,K \in \cF$ are such that $n=\un{n}(K)=\un{n}(H)$,
then for every $G$-map $f \colon  G/H \to G/K$ the induced map $\CC(f)\colon \CC (K) \to \CC (H)$ is an $R$-homology isomorphism.
\item Suppose $H, K, L\in \cF$ are such that $H \leq K,L$ and
let $M=\langle K, L \rangle$ be the subgroup of $G$ generated by $K$ and $L$. If
$n=\un{n}(H)=\un{n}(K)=\un{n}(L) >-1$, then $M \in \cF$ and
$n=\un{n}(M)$.  
\end{enumerate}
\end{definition}

Note that conditions (ii) and (iii) of Definition \ref{def:algrep} are automatic if the dimension function $\un{n}$ is strictly monotone. Under condition (iii),  the isotropy family $\cF$ has an important maximality property.

\begin{proposition}\label{cor:max} Let $\un{n}$ be a super class function and let $\CC$ be a %
projective chain complex of $\RG_G$-modules, which is an $R$-homology $\un{n}$-sphere. If condition \textup{(iii)} holds, then for each $H \in \cF$, the set of subgroups $\cF_H = \{ K \in \cF \vv (H) \leq (K),\ \un{n}(K) = \un{n}(H)> -1\}$ has a unique maximal element, up to conjugation. 
\end{proposition}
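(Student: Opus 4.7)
The plan is to verify existence and then use condition (iii) of Definition \ref{def:algrep} to force any two maximal elements to coincide up to conjugation. Since $G$ is finite, there are only finitely many conjugacy classes of subgroups, so $\cF_H$ (viewed up to conjugation) is a finite poset under the subconjugacy relation $\leq$, and hence has at least one maximal element. The real content is uniqueness.

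For uniqueness, I would take two elements $K_1, K_2 \in \cF_H$ that are maximal up to conjugation, and aim to show $(K_1) = (K_2)$. A small but essential preliminary step is that $(H) \leq (K_i)$ only provides $H \leq g_i K_i g_i^{-1}$ for some $g_i \in G$, whereas condition (iii) requires the literal containment $H \leq K_i$. Since $\cF$ and $\cF_H$ are closed under conjugation, I may replace each $K_i$ by the conjugate $g_i K_i g_i^{-1}$ without loss of generality, so that $H \leq K_1$ and $H \leq K_2$ simultaneously.

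Now the hypotheses of condition (iii) are satisfied with the common value $n = \un{n}(H) = \un{n}(K_1) = \un{n}(K_2) > -1$. Setting $M = \langle K_1, K_2\rangle$, condition (iii) gives $M \in \cF$ and $\un{n}(M) = n$. Since $H \leq K_1 \leq M$, we have $M \in \cF_H$, and $K_1 \leq M$ forces $(K_1) \leq (M)$. Maximality of $(K_1)$ then yields $(K_1) = (M)$; in particular $|K_1| = |M|$, and since $K_1$ is actually contained in $M$ this means $K_1 = M$. Consequently $K_2 \leq K_1$. Swapping the roles of $K_1$ and $K_2$ in exactly the same argument gives $K_1 \leq K_2$, and therefore $K_1 = K_2$, proving $(K_1) = (K_2)$ in the original (non-conjugated) situation as well.

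I do not expect any serious obstacle here; the proof is essentially a direct unpacking of condition (iii). The only point that requires a moment's care is the bookkeeping with the subconjugacy order versus literal containment, i.e., justifying that we may pass to conjugates of $K_1$ and $K_2$ inside $\cF_H$ before invoking (iii). Everything else, including the use of $\un{n}(M) = n > -1$ to ensure $M$ actually lies in $\cF_H$ (rather than just in $\cF$), is automatic from the definitions.
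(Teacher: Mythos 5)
Your proof is correct and follows essentially the same route the paper intends: condition (iii) is applied to the join $\langle K_1, K_2\rangle$ of two maximal elements of $\cF_H$ to show they must coincide up to conjugation. The bookkeeping you highlight—passing from the subconjugacy hypothesis $(H)\leq(K_i)$ to the literal containments $H\leq K_i$ required by condition (iii) by replacing each $K_i$ with a conjugate, which is permitted since $\cF_H$ is conjugation-closed—is exactly the point the paper's terse "clear by induction" glosses over, and you have it right.
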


\begin{proof} Clear by induction from the statement of condition (iii). 
\end{proof}

In the remainder of this section we will assume that $R$ is a principal ideal domain. The important examples for us are $R =\bZp$ or $R=\bZ$. The main result of this section is the following proposition. 
 
\begin{proposition}
\label{prop:tightness conditions} Let $\un{n}$ be a super class function and $\CC$ be 
a finite projective  chain complex over $\RG_G$, which is an $R$-homology $\un{n}$-sphere. Assume that $R$ is a principal ideal domain. If the equality $\un{n} = \Dim \CC$ holds, then $\CC$ is an algebraic homotopy representation.
\end{proposition}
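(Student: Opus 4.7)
The plan is to verify the three defining properties (i)--(iii) of an algebraic homotopy representation from Definition~\ref{def:algrep}.

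Property (i) is immediate from Lemma~\ref{lem:monotone}, since $\un{n} = \Dim\CC$ by hypothesis.

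For property (ii), after conjugation I may assume $H \leq K$ and that $f\colon G/H \to G/K$ is the canonical projection. I plan to show that each $\CC(f)_i\colon \CC_i(K) \to \CC_i(H)$ is a \emph{split} injection of free $R$-modules: writing $\CC_i$ as a direct summand of a free $\RG_G$-module $\bigoplus_\alpha R[G/H_\alpha^?]$, the set inclusion $(G/H_\alpha)^K \hookrightarrow (G/H_\alpha)^H$ induces a split inclusion of free $R$-modules on each summand, and the splitting descends to the $\CC_i$-summand. By the tightness hypothesis $\CC_{n+1}(K) = 0$, so over the PID $R$ the top cycles $Z_n(\CC(K)) = H_n(\CC(K)) \cong R$ form a rank-one free direct summand of $\CC_n(K)$; in particular, any generator $z_K$ of $Z_n(\CC(K))$ is primitive in $\CC_n(K)$. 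Fixing a primitive generator $z_H$ of $Z_n(\CC(H))$ analogously and writing $\CC(f)_n(z_K) = \lambda\, z_H$ with $\lambda \in R$, I note that a split injection between free $R$-modules carries primitive elements to primitive elements, so $\lambda\, z_H = \CC(f)_n(z_K)$ is primitive in $\CC_n(H)$. Since $z_H$ is itself primitive, this forces $\lambda$ to be a unit, whence $\CC(f)$ induces an isomorphism on $H_n$. The lower reduced homology groups vanish on both sides, completing (ii).

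For property (iii), I plan to apply (ii) to the canonical projections $f_K\colon G/H \to G/K$ and $f_L\colon G/H \to G/L$, so that a generator $z_H$ of $H_n(\CC(H)) \cong R$ lies (up to a unit) in both images $\CC(f_K)_n(\CC_n(K))$ and $\CC(f_L)_n(\CC_n(L))$. Using the L\"uck decomposition $\CC_n \cong \bigoplus_\alpha E_{H_\alpha} P_\alpha$ from the proof of Lemma~\ref{lem:monotone}, together with the flatness of each projective $P_\alpha$ over $RW_G(H_\alpha)$, the two images identify with $\bigoplus_\alpha P_\alpha \otimes_{RW_G(H_\alpha)} R[(G/H_\alpha)^K]$ and $\bigoplus_\alpha P_\alpha \otimes_{RW_G(H_\alpha)} R[(G/H_\alpha)^L]$ respectively, sitting inside $\CC_n(H)$. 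Flatness also gives that their intersection is $\bigoplus_\alpha P_\alpha \otimes_{RW_G(H_\alpha)} R[(G/H_\alpha)^M]$ with $M = \langle K, L\rangle$, and this summand is nonzero exactly on those $\alpha$ for which $(M) \leq (H_\alpha)$. Nonvanishing of $z_H$ then produces such an $\alpha$, giving an $H_\alpha \in \cF$ into which some conjugate of $M$ embeds. Closure of $\cF$ under subgroups and conjugation yields $M \in \cF$, and $E_{H_\alpha} P_\alpha(M) \neq 0$ gives $\un{n}(M) = \dim \CC(M) \geq n$; combined with the monotonicity of $\un{n}$ applied to $K \leq M$, this forces $\un{n}(M) = n$.

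The main obstacle is the step in (ii) of upgrading ``$\lambda \neq 0$'' to ``$\lambda$ is a unit''. This is where both the tightness assumption $\un{n} = \Dim\CC$ (which makes $Z_n(\CC(K))$ a rank-one direct summand of $\CC_n(K)$ by removing any cells in degree $n+1$) and the PID hypothesis on $R$ enter in an essential way, via the primitivity argument on split injections. The identification of images in (iii) through the flat L\"uck decomposition is the corresponding structural input that then makes the conclusions $M \in \cF$ and $\un{n}(M) = n$ routine.
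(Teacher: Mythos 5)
Your proof is correct, but it follows a genuinely different route from the paper's, in both parts (ii) and (iii). For (ii), the paper takes the short exact sequence $0 \to \CC(K) \to \CC(H) \to \DD \to 0$ provided by Lemma~\ref{lem:resmap} and chases the long exact reduced homology sequence: since $\DD$ has dimension $\leq n$, $H_{n+1}(\DD)=0$ gives injectivity of $f^*$, and torsion-freeness of $H_n(\DD) \subseteq \DD_n$ forces the cokernel of the injection $R\to R$ to vanish. You instead upgrade Lemma~\ref{lem:resmap} (over the PID $R$) to a degreewise \emph{split} injection and run a primitivity argument on the top cycles, which works and is perhaps more concrete, at the cost of needing the observation that $Z_n(\CC(K))$ and $Z_n(\CC(H))$ are rank-one direct summands (this is where the tightness $\un{n}=\Dim\CC$ enters for you, by killing $\CC_{n+1}$; in the paper tightness enters as the dimension bound on $\DD$). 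For (iii), the paper works with the image subcomplexes $\CC^K_H, \CC^L_H \subseteq \CC(H)$, invokes Lemma~\ref{lem:intersection} to identify $\CC^K_H \cap \CC^L_H$ with $\CC^M_H$ when nonzero, and runs a Mayer--Vietoris argument to show this intersection is itself a homology $n$-sphere. You bypass Mayer--Vietoris entirely, working only in the top chain module $\CC_n$ via the L\"uck decomposition $\CC_n \cong \bigoplus_\alpha E_{H_\alpha}P_\alpha$ and using flatness of the $P_\alpha$ to commute tensoring past the set-theoretic intersection $(G/H_\alpha)^K \cap (G/H_\alpha)^L = (G/H_\alpha)^M$; this correctly detects $M\in\cF$ and $\CC_n(M)\neq 0$, and you then close with monotonicity. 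Both arguments are sound. Note one structural difference: the paper's Mayer--Vietoris argument proves the stronger intermediate statement that $\CC^K_H \cap \CC^L_H$ is a homology $n$-sphere, and that pattern is reused almost verbatim as Lemma~\ref{lem:tightness} in the proof of Theorem A; your shortcut would not serve as a template there. Also, it is worth making explicit that your intersection identity relies on flatness commuting with finite intersections of submodules, and that $R[(G/H_\alpha)^M]$ is a free $RW_G(H_\alpha)$-module (the $W_G(H_\alpha)$-action on $(G/H_\alpha)^M$ is free), so $P_\alpha \otimes_{RW_G(H_\alpha)} R[(G/H_\alpha)^M]\neq 0$ once the set is nonempty.
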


Before we prove Proposition \ref{prop:tightness conditions}, we make some observations and give some definitions for projective chain complexes.

\begin{lemma}\label{lem:resmap} Let $\CC$ be a projective chain complex over $\RG_G$. Then, for every $G$-map $f\colon  G/H \to G/K$, the induced map $\CC (f) \colon  \CC(K)\to \CC(H)$  is an injective map with an $R$-torsion free cokernel.
\end{lemma}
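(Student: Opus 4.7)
The plan is to reduce to the case of a representable projective module $R[G/L^?]$ by observing that both properties in question---injectivity and $R$-torsion freeness of the cokernel---pass to direct summands and are compatible with direct sums. Since every projective $\RG_G$-module is a direct summand of a free module (i.e.\ of a direct sum of representables $R[G/L^?]$, $L \in \cF$), it is enough to verify the conclusion for each chain module $\CC_i$ of this special form.

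Next, I would unwind the induced map in the representable case. A $G$-map $f\colon G/H \to G/K$ is determined by an element $g_0 \in G$ with $g_0^{-1}Hg_0 \leq K$, via $gH \mapsto g g_0 K$. By the Yoneda-type description, the induced map
\[
R[G/L^?](f)\colon R[(G/L)^K] \longrightarrow R[(G/L)^H]
\]
is the $R$-linear extension of the set map $\varphi_f\colon (G/L)^K \to (G/L)^H$ sending $yL \mapsto g_0 y L$. The containments $y^{-1}Ky \leq L$ and $g_0^{-1}Hg_0 \leq K$ ensure that $g_0yL$ does indeed lie in $(G/L)^H$, so $\varphi_f$ is well defined.

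The key observation is that $\varphi_f$ is nothing but the restriction to $(G/L)^K$ of the bijection $G/L \to G/L$ given by left multiplication by $g_0$. In particular, $\varphi_f$ is an injection of sets, so $R[G/L^?](f)$ is an injection of free $R$-modules whose cokernel is the free $R$-module on the set-theoretic complement of the image of $\varphi_f$; such a cokernel is automatically $R$-torsion free.

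Finally I would assemble the pieces. Writing a finitely generated projective $P$ as a summand of a free module $F = \oplus_\alpha R[G/L_\alpha^?]$, the map $F(f)$ is the direct sum of injections with $R$-torsion free cokernels, hence is itself injective with $R$-torsion free cokernel (using that a direct summand of an $R$-torsion free module is $R$-torsion free). Restricting to the summand $P$ preserves injectivity, and the cokernel of $P(f)$ is a direct summand of the cokernel of $F(f)$, so it too is $R$-torsion free. No step really looks like a hard obstacle: the only thing one needs to be careful about is the correct description of the induced map on representables, which amounts to bookkeeping with the Yoneda lemma and left-multiplication on $G/L$.
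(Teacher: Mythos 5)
Your proposal is correct and follows essentially the same route as the paper: reduce to a free module via direct summands, identify the induced map as the $R$-linearization of an injective set map $X^K \to X^H$ given by $x \mapsto g_0 x$, and conclude injectivity with $R$-torsion-free cokernel. The only superficial difference is that the paper works with a general free module $R[X^?]$ for a finite $G$-set $X$ rather than decomposing further into representables $R[G/L^?]$, and it states the direct-summand closure more tersely.
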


\begin{proof} It is enough to show that if $P$ a projective $\RG _G$-module, then for every $G$-map $f\colon  G/H \to G/K$, the induced map $P(f) \colon  P(K)\to P(H)$  is an injective map with a torsion free cokernel. Since every projective module is a direct summand of a free module, it is enough to prove this for a free module $P=R [X^?]$, where $X$ is a finite $G$-set. Let $f\colon G/H \to G/K$ be the $G$-map defined by $f(H)=gK$. Then the induced map $P(f)\colon R[X^K]\to R[X^H]$ is the linearization of the map $X^K \to X^H$ given by $x\mapsto gx$. Since this map is one-to-one, we can conclude that $P(f)$ is injective  with torsion free cokernel.  
\end{proof}

When $H\leq K$ and $f\colon G/H \to G/K$ is the $G$-map defined by $f(gH)=gK$ for each $g \in G$, then we denote the induced map  $\CC(f)\colon \CC (K) \to \CC (H)$ by $r^K_H$ and call it the \emph{restriction} map. When $H$ and $K$ are conjugate, so that $K=x^{-1}Hx$ for some $x \in G$, then the map $\CC(f)\colon\CC (K)\to \CC (H)$ induced by the $G$-map $f\colon G/H \to G/K$ defined by $f(gH)=gxK$ for each $g\in G$, is called the \emph{conjugation} map and usually denoted by $c^g_K$. Every $G$-map can be written as a composition of two $G$-maps of the above two types, so every induced map $\CC (f)\colon \CC (K)\to \CC (H)$ can be written as a composition of restriction and conjugation maps.

Since conjugation maps have inverses, they are always isomorphisms. So, the condition (ii) of Definition \ref{def:algrep} is actually a statement only about restriction maps. To study the restriction maps more closely, we consider the image of $r^K _H\colon \CC (K) \to \CC (H)$ for a pair $H \leq K$ and denote it by $\CC ^K_H$. Note that $\CC ^K_H$ is a subcomplex of $\CC (H)$ as a chain complex of $R$-modules. We also remark that  $\CC ^K_H$ is isomorphic to $\CC (K)$, as a chain complex of $R$-modules,  by Lemma \ref{lem:resmap},  whenever $\CC$ is a projective chain complex.

\begin{lemma}\label{lem:intersection} Let $\CC $ be a projective chain complex over $\RG _G$. Suppose that $K,L\in \cF$ are such that $H\leq K$ and $H\leq L$, and let $M=\la K, L\ra$ be the subgroup generated by $K$ and $L$. If\, $\CC ^K_H \cap \CC ^L _H\neq 0$ then $M \in \cF$,  and hence we have $\CC ^K_H \cap \CC ^L _H =\CC ^M _H$.   
\end{lemma}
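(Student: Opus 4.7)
The plan is to reduce to the case of a generating free module $P = R[G/N^?]$, where the statement becomes an elementary computation with fixed-point sets.

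The first reduction is from chain complexes to individual modules. Since all restriction maps commute with differentials, $\CC^K_H$ and $\CC^L_H$ are chain subcomplexes of $\CC(H)$ formed degreewise, and it suffices to prove, for each chain module $P = \CC_n$, that $P^K_H \cap P^L_H$ is nonzero only if $M \in \cF$, and that in that case $P^K_H \cap P^L_H = P^M_H$. The second reduction is from projective to free: since every finitely generated projective $\RG_G$-module is a direct summand of a finitely generated free one, and both $(-)^K_H$ and the formation of intersections distribute over direct sums, the claim reduces further to $P = R[G/N^?]$ for a single $N \in \cF$.

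In that case $P(J) = R[(G/N)^J]$ for every $J \in \cF$, and the restriction $r^J_H$ is the $R$-linear extension of the set inclusion $(G/N)^J \hookrightarrow (G/N)^H$ (as in the proof of Lemma~\ref{lem:resmap}). Thus $P^K_H$ and $P^L_H$ are the $R$-submodules of $R[(G/N)^H]$ spanned by $(G/N)^K$ and $(G/N)^L$ respectively, and
$$P^K_H \cap P^L_H \;=\; R\bigl[\,(G/N)^K \cap (G/N)^L\,\bigr] \;=\; R\bigl[\,(G/N)^M\,\bigr],$$
because the conditions $g^{-1}Kg \leq N$ and $g^{-1}Lg \leq N$ together are equivalent to $g^{-1}Mg \leq N$. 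If this module is nonzero, then some conjugate of $M$ is contained in $N \in \cF$, so $M \in \cF$ because $\cF$ is closed under subgroups and conjugation. Once $M \in \cF$ has been established, the right-hand side is by definition $P^M_H$.

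I do not anticipate a serious obstacle. The main content is a transparent calculation with permutation modules; the only care required is verifying that the two reductions---chain complexes to single modules, and projectives to generating frees---are compatible with formation of images and intersections. Both compatibilities rely only on additivity and naturality of the relevant constructions in the orbit category variable.
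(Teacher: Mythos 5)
Your proof is correct and takes essentially the same approach as the paper: reduce to a free module $R[X^?]$ via the standard direct-summand argument, recognize the restriction maps as linearized inclusions of fixed-point sets, and observe $X^K \cap X^L = X^M$ with nonemptiness forcing $M \in \cF$ by isotropy closure. The paper works directly with a general finite $G$-set $X$ rather than passing to a single orbit $G/N$, but that is a cosmetic difference.
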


\begin{proof} As before it is enough to prove this for a free $\RG_G$-module $P=R[X^?]$ where $X$ is a finite $G$-set whose isotropy subgroups lie in $\cF$. The restriction maps $r^K_H$ and $r^L_H$ are linearizations of the maps $X^K \to X^H$  and $X^L \to X^H$, respectively, which are  defined by inclusion of subsets. Then it is clear that the intersection of images of $r^K_H$ and $r^L_H$ (if non-zero) would be $R[X^K\cap X^L]$, considered as an $R$-submodule of $R[X^H]$. We have $X^K\cap X^L=X^M$ where $M=\la K, L \rangle$.  Therefore,  if $\CC _H ^K \cap \CC _H ^L\neq 0$, then we must have $X^M \neq \emptyset$ which implies that $M \in \cF$. Thus $\CC _H^M$ is defined and we can write $\CC ^K _H \cap \CC ^L _H= \CC ^M _H$ by the above fixed point formula.  
\end{proof}

Now, we are ready to prove Proposition \ref{prop:tightness conditions}.
 
\begin{proof}[Proof of Proposition \ref{prop:tightness conditions}]  The first condition in Definition \ref{def:algrep}  follows from Lemma
\ref{lem:monotone}. For (ii) and (iii), we use the arguments similar to the arguments given
in II.10.12 and II.10.13 in \cite{tomDieck2}. 

To prove (ii), let $f\colon G/H \to G/K$ be a $G$-map.  By Lemma \ref{lem:resmap}, the induced map $\CC (f): \CC (K) \to \CC (H)$ is injective with torsion free cokernel.  
Let $\DD$ denote the cokernel of $\CC (f)$. Then we have a short exact sequence of $R$-modules
 $$0\to \CC(K) \to \CC (H) \to \DD \to 0 $$ where both $\CC (K)$ and $\CC (H)$ have dimension $n$. Now consider the long exact \emph{reduced} homology sequence 
  (with coefficients in $R$) associated to this short exact sequence:
$$ \dots \to 0 \to H_{n+1} ( \DD ) \to H_n (\CC (K) )
\xrightarrow{f^*} H_n (\CC (H)) \to H_n (\DD ) \to \cdots $$
Note that $\DD$ has dimension less than or equal to $n$, so $H_{n+1}(\DD)=0$ and $H_n (\DD)$ is torsion free.
Since $H_n (\CC (K))=H_n (\CC (H))=R$, it follows that $f^*$ is an isomorphism. Since both $\CC(K)$ and $\CC(H)$ have no other reduced homology, we conclude that $\CC (f)$ induces an $R$-homology isomorphism between associated augmented complexes. Since the induced map $\un R (f): \un{R} (K)\to \un{R} (H)$ is the identity map $\id\colon  R\to R$, the chain map $\CC(f) \colon  \CC(K) \to \CC(H)$ is an $R$-homology isomorphism. 

To prove (iii), observe that there is a Mayer-Vietoris type exact sequence associated to the pair of complexes $\CC ^K_H$ and $\CC ^L_H$ which gives an exact sequence of the form
$$ 0 \to H_n (\CC^K_H \cap \CC ^L_H ) \to H_n (\CC ^K_H) \oplus H_n (\CC ^L_H )
 \to H_n (\CC ^K _H+ \CC ^L _H )  \to H_{n-1} (\CC ^K_H \cap \CC ^L_H  ) \to 0.$$ 
Here we again take the homology sequence as the reduced homology sequence. 

Let $i^K\colon \CC^K_H \to \CC(H)$, $i^L_H\colon \CC^L_H\to \CC(H)$, and  $j\colon \CC^K_H + \CC^L_H \to \CC(H)$ denote the inclusion maps.
We have zero on the left-most term since $\CC ^K_H +\CC ^L_H$ is an $n$-dimensional complex. To see the zero on the right-most term, note that by Lemma \ref{lem:resmap},  $\CC ^K _H \cong \CC (K)$ and $\CC ^L _H \cong \CC (L)$ as chain complexes of $R$-modules, so they have the same homology. This gives that $H_{i} (\CC ^K_H)=H_{i} (\CC ^L _H)=0$ for $i \leq n-1$. 

Also note that by part (ii), the composition
$$ H_n (\CC (K)) \cong H_n (\CC ^K _H)\xrightarrow{i^K_*} H_n (\CC ^K _H + \CC ^L _H ) \xrightarrow{j_*} H_n (\CC (H) )$$ is an isomorphism. So, $j_*$ is surjective. Since $H_{n+1} (\CC (H) /(\CC ^K_H +\CC ^L_H))=0$, we see that $j_*$ is also injective. Therefore, $j_*$
is an isomorphism. This implies that $i^K _*$ is an isomorphism. Similarly one can show that $i^L _*\colon H_n (\CC ^L _H )\to H_n (\CC ^K _H +\CC ^L _H)$ is also an isomorphism.
Using these isomorphisms and looking at the exact sequence above, we conclude that $H_n (\CC ^K _H \cap \CC ^L _H )\cong R$ and $H_i (\CC ^K _H \cap \CC ^L _H )=0$ for $i\leq n-1$.
So, $\CC ^K _H \cap \CC ^L _H$ is an $R$-homology $n$-sphere.

Since $n>-1$, this implies that
$\CC _H ^K \cap \CC _H ^L \neq 0$, and hence $M = \la K, L\ra \in \cF$ by Lemma \ref{lem:intersection}. Moreover, $\CC _H ^K \cap \CC _H ^L  = \CC^M_H$. This proves that $\un{n}(M)=n$ as desired. 
\end{proof}

\section{The Proof of Theorem A}\label{sec:thma}

In this section we will again assume that $R$ is a principal ideal domain. The main examples for us are $R =\bZp$ or $R=\bZ$, as before. 

\begin{definition} We say a  chain complex $\CC$ of $\RG_G$-modules 
is {\it tight at $H\in \cF$} if $$\dim \CC (H)=\hd \CC (H).$$  We call a chain complex of $\RG _G$-modules {\it tight} if it is tight at every $H\in \cF$. 
\end{definition}

Suppose that $\CC$ is a finite projective complex over $\RG_G$ which is an $R$-homology $\un{n}$-sphere.
If $\CC$ is chain homotopy equivalent to a tight complex, then Proposition \ref{prop:tightness conditions} shows that $\CC$ is an algebraic homotopy representation. This establishes one direction of Theorem A. The other direction uses the assumption that the chain modules of $\CC$ are free over $\RG_G$.
 
\begin{theorem}\label{thm:tightnessR}
Let $\CC$ be a finite 
chain complex of free $\RG_G$-modules which is a
homology $\un{n}$-sphere. If  $\CC$ is an algebraic homotopy representation over $R$, then $\CC$ is chain homotopy equivalent to a finite
free chain complex $\DD$ which is tight.\end{theorem}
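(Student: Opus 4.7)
The plan is to prove the theorem by iteratively cancelling ``excess'' free summands of $\CC$ until the chain dimension matches $\un{n}$. For each $d \geq 0$, I would use the projective decomposition recorded in the proof of Lemma \ref{lem:monotone} to write $\CC_d = \bigoplus_H E_H P_H$ with $P_H$ a free $RW_G(H)$-module, and call a summand $E_H P_H$ \emph{excess} if $\un{n}(H) < d$. The induction is on the total $RW_G(H)$-rank of all excess summands summed across all dimensions. When no excess summand remains, every summand $E_H P_H$ of $\CC_d$ satisfies $\un{n}(H) \geq d$; combined with monotonicity (Lemma \ref{lem:monotone}) and the inequality $\un{n}(H) \leq \hd \CC(H) \leq \dim \CC(H)$, this forces $\un{n} = \Dim \CC$, i.e., tightness.

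For the inductive step, I would choose a conjugacy class $(K)$ maximal (in the $\leq$ order) among those admitting an excess summand $E_K P_K \subseteq \CC_d$ at some $d > n := \un{n}(K)$. By maximality and monotonicity of $\Dim \CC$, no summand $E_L P_L$ with $(L) > (K)$ can appear in $\CC_d$. Since $\CC$ is free over $\RG_G$, the evaluation $\CC(K)$ is a bounded chain complex of free $R$-modules with $\hd = n < d$, so $\bd_d(K)$ is injective on $\CC_d(K)$; in particular, it is injective on the free $RW_G(K)$-summand $P_K = (E_K P_K)(K)$.

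I would then pick a free $RW_G(K)$-generator $e \in P_K$, yielding a free $\RG_G$-summand $\bar e \colon R[G/K^?] \hookrightarrow \CC_d$, and show that $\bd_d \circ \bar e$ is injective as a map of $\RG_G$-modules with image a free $\RG_G$-direct summand of $\CC_{d-1}$. The standard elementary cancellation for finite chain complexes of free modules would then produce a free complex $\CC' \simeq \CC$ with one fewer excess generator, strictly decreasing the inductive invariant, and the induction would close.

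The main obstacle is the global compatibility step: verifying that $\bd_d \circ \bar e$ remains injective at every $(L) < (K)$ in $\cF$ and that its image splits off globally as an $\RG_G$-submodule. At an $L$ with $\un{n}(L) = d$ one has $H_d(\CC(L)) = R \neq 0$, so injectivity at $L$ is not automatic and the image could overlap with top cycles. This is precisely where the algebraic homotopy representation hypotheses enter. By Proposition \ref{cor:max} together with condition (iii) of Definition \ref{def:algrep}, the set $\{ L \in \cF : (L) \leq (K),\ \un{n}(L) = d\}$ admits a unique maximal element $M \in \cF$ with $\un{n}(M) = d$, and condition (ii) makes the restriction $\CC(M) \to \CC(L)$ an $R$-homology isomorphism for each such $L$. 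I expect this forces every top cycle at $L$ to be detected by (and lift through) the $E_M P_M$ summand of $\CC_d$, which is a distinct summand from the $E_K P_K$ we are cancelling, so that the cancellation does not interfere with the top homology. Establishing this disjointness carefully --- making the chosen $\bar e$ avoid the top cycles at every such $L$ and producing the required $\RG_G$-splitting --- is the genuine technical heart of the proof.
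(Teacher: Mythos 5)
Your overall strategy --- downward induction on conjugacy classes, cancelling excess free summands at the top --- is exactly the strategy of the paper, which organizes it via the isotypic subquotients $\CC_{(H)} = \CC^{(H)}/\CC^{>(H)}$, evaluated at $H$ via the splitting functor $S_H$, rather than handling one free generator $\uR{G/K}$ at a time. Both are bookkeeping variants of the same plan, so this is not a genuinely different route. However, your sketch has a genuine gap: you correctly locate the crux of the difficulty but do not resolve it. You write ``I expect this forces every top cycle at $L$ to be detected by (and lift through) the $E_M P_M$ summand\ldots'' and explicitly flag this as ``the genuine technical heart of the proof.'' That heart is precisely what is missing. The paper supplies it in Lemma \ref{lem:tightness} and Corollary \ref{cor:vanishing}: a Mayer--Vietoris-type induction on the set $\cK_H = \{K > H : \un{n}(K) = \un{n}(H) = n\}$, using conditions (ii) and (iii) of Definition \ref{def:algrep} together with Lemma \ref{lem:intersection}, to prove that $H_n\bigl(\sum_{K\in\cK_H}\CC^K_H\bigr) \to H_n(\CC(H))$ is an isomorphism and hence $H_{n+1}(S_H\CC) = 0$. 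It is exactly this vanishing that makes the top boundary of the isotypic quotient injective and allows the cancellation (via Lemma \ref{lem:splitting} / \cite[Prop.~8.7]{hpy1}) to go through. Simply invoking Proposition \ref{cor:max} and conditions (ii), (iii) does not by itself produce the splitting; the inductive Mayer--Vietoris argument, and the observation that the relevant top cycles live in $\CC^{>(H)}(H)$, is where the real work is.

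There are also two smaller precision issues. First, when you assert that $\bd_d(K)$ is injective on $\CC_d(K)$ because $\hdim\CC(K) = n < d$, this only follows if $d = \dim\CC(K)$ --- otherwise $\ker\bd_d(K) = \Image\bd_{d+1}(K)$ need not vanish. You must take $d$ to be the top dimension in which an excess summand of type $(K)$ occurs (which the paper guarantees by its downward induction and the prior tightening above $(H)$). Second, your ``danger zone'' is framed as $L$ with $\un{n}(L) = d$; in the paper's setup the dangerous subgroups are those $K > H$ with $\un{n}(K) = n = d-1$, precisely the collection $\cK_H$, and the obstruction manifests as a possibly nonzero $H_{n+1}(S_H\CC)$. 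These are essentially the same phenomenon, but you should reconcile the indexing before attempting to write the argument out.
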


\begin{remark} If $\CC$ is a finite projective chain complex,  then the analogous result holds for a sufficiently large $k$-fold join tensor product $\CC' =\bigast_k \CC$, by \cite[Theorem 7.6]{hpy1}. \end{remark}

We need to show that the complex $\CC$ can be made \emph{tight} at each $H \in \cF$ by replacing it with a chain complex homotopic to $\CC$. The proof is given in several steps.

\subsection{Tightness at maximal isotropy subgroups}
Let $H$ be a \emph{maximal element} in $\cF$. Consider the subcomplex
$\CC ^{(H)}$ of $\CC$ formed by free summands of $\CC$ isomorphic to
$\uR{G/H}$. Note that $\CC ^{(H)}$ is a submodule because $\Hom _{R\G_G} (R[G/H^?], R[G/K ^?])\neq 0$ only if $(H) \leq (K)$, and since $H$ is maximal, we have $\bd _i (\CC _i ^{(H)}) \subseteq \CC _{i-1} ^{(H)}$ for all $i$. The complex $\CC ^{(H)} $ is a complex of isotypic modules
of type $\uR{G/H}$. Recall that free $\RG_G$-module $F$ is called \emph{isotypic} of type $G/H$ if
it is isomorphic to a direct sum of copies of a free module
$\uR{G/H}$, for some $H \in \cF$. For extensions involving isotypic modules we have the following:

\begin{lemma}\label{lem:splitting}
Let $$\cE\colon   0 \to F \to F' \to M\to 0$$ be a short exact sequence of $\RG_G$-modules such that both $F$ and $F'$ are isotypic free
modules of the same type $G/H$. If $M(H)$ is $R$-torsion free, then
$\cE$ splits and $M$ is stably free.
\end{lemma}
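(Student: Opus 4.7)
The plan is to reduce the problem to a short exact sequence over the subquotient group ring $RW_G(H)$ via the Yoneda adjunction, split it there, and transfer the splitting back to $R\Gamma_G$.

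For the reduction, I would use that evaluation at $H$ provides a fully faithful, exact functor from the subcategory of isotypic modules of type $G/H$ to right $RW_G(H)$-modules, with quasi-inverse given by the extension functor $E_H(P) = P \otimes_{RW_G(H)} R[(G/H)^?]$. Exactness of $E_H$ rests on the observation that $W_G(H)$ acts freely on each fixed-point set $(G/H)^K$ by right multiplication, so $R[(G/H)^K]$ is a free (hence flat) right $RW_G(H)$-module for every $K \in \cF$. By Yoneda the identifications $F = E_H(F(H))$ and $F' = E_H(F'(H))$ with $F(H), F'(H)$ finitely generated free $RW_G(H)$-modules are natural, and the sequence $\cE$ corresponds to a short exact sequence of right $RW_G(H)$-modules
\[
0 \to F(H) \to F'(H) \to M(H) \to 0,
\]
with $M \cong E_H(M(H))$. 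By full faithfulness of $E_H$, splittings of $\cE$ correspond bijectively to splittings of this $RW_G(H)$-sequence.

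The central step is to prove this $RW_G(H)$-sequence splits. Under the hypothesis that $M(H)$ is $R$-torsion free and finitely generated over the PID $R$, $M(H)$ is $R$-free. The goal is to bootstrap this $R$-freeness, together with the specific structure of an inclusion of free $RW_G(H)$-modules, into $RW_G(H)$-projectivity of $M(H)$. I would approach this by dualizing with respect to $RW_G(H)$, using the long exact sequence
\[
0 \to M(H)^* \to F'(H)^* \to F(H)^* \to \Ext^1_{RW_G(H)}(M(H),\,RW_G(H)) \to 0,
\]
where $(-)^* = \Hom_{RW_G(H)}(-, RW_G(H))$ and the higher Ext terms vanish since $F(H), F'(H)$ are free. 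Using that $RW_G(H)$ is a Frobenius $R$-algebra, the $R$-torsion-freeness of $M(H)$ forces the dual inclusion $F'(H)^* \to F(H)^*$ to be surjective, hence $\Ext^1_{RW_G(H)}(M(H), RW_G(H)) = 0$, and then $\Ext^1_{RW_G(H)}(M(H), F(H)) = 0$ since $F(H)$ is a direct summand of a direct sum of copies of $RW_G(H)$.

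Finally, the splitting $F'(H) \cong F(H) \oplus M(H)$ exhibits $M(H)$ as stably free over $RW_G(H)$ (namely $M(H) \oplus F(H) \cong F'(H)$ with both $F(H)$ and $F'(H)$ free of finite rank). Applying the exact, fully faithful functor $E_H$ yields $M \oplus F \cong F'$ in $R\Gamma_G$-modules, establishing both conclusions: $\cE$ splits and $M$ is stably free over $R\Gamma_G$.

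The main obstacle is the middle step: deducing $RW_G(H)$-projectivity of $M(H)$ from mere $R$-torsion-freeness. The argument must exploit both the Frobenius property of the finite group algebra $RW_G(H)$ and the fact that $M(H)$ arises as the cokernel of a map between free modules, rather than being an arbitrary torsion-free module (for which the conclusion would fail).
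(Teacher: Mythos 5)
Your proof is correct and complete.  The paper itself gives no argument here: it merely cites Lemma~8.6 of \cite{hpy1} and records the single remark that the PID hypothesis forces a finitely generated $R$-torsion-free module to be $R$-free.  That observation is exactly the pivot of your argument, and the rest of your route is the standard one for this kind of statement: identify the isotypic category with $RW_G(H)$-modules via the extension functor $E_H$ (which is exact and fully faithful because $W_G(H)$ acts freely on each $(G/H)^K$, so $E_H$ preserves free modules and admits evaluation at $H$ as inverse), pass to the evaluated sequence $0\to F(H)\to F'(H)\to M(H)\to 0$, and then use that $RW_G(H)$ is a symmetric (Frobenius) $R$-algebra to convert $R$-splitness (coming from $M(H)$ being $R$-free) into $\Ext^1_{RW_G(H)}(M(H),RW_G(H))=0$, hence a splitting over $RW_G(H)$, hence over $R\Gamma_G$.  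Two points worth tightening in the write-up: (1) the identification $F\cong E_H(F(H))$ is by the explicit formula $R[G/H^?]\cong E_H(RW_G(H))$ and additivity of $E_H$, not really ``by Yoneda''; (2) $F(H)$ is a finitely generated \emph{free} $RW_G(H)$-module outright, so the ``direct summand of a direct sum of copies'' phrasing can be simplified.  Neither affects the validity of the argument.
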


\begin{proof} This is Lemma 8.6 of \cite{hpy1}. The assumption that $R$ is a principal ideal domain ensures that finitely generated $R$-torsion free modules are free.
\end{proof}

Note that  $\CC ^{(H)}(H)=\CC (H)$, since $H$ is maximal in $\cF$. 
This means that $\CC ^{(H)}$ is a finite free chain complex over $\RG_G$ of the form
$$\CC ^{(H)}\colon 0\to F _{d}
\to F_{d-1}\to \cdots \to F_1 \to F_0 \to 0$$ 
which  is a $R$-homology $\un{n} (H)$-sphere, with $\un{n} (H) \leq d $.

\begin{lemma}\label{lem:maximals} Let $\CC$ be a finite chain complex of free $\RG _G$-modules. Then $\CC$ is chain homotopy equivalent to a finite free chain complex $\DD$ which is tight at every maximal element $H \in \cF$.
\end{lemma}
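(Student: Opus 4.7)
The plan is to reduce $\dim \CC(H)$ from $d$ to $n = \hd \CC(H)$ at each conjugacy class of maximal element $H \in \cF$ by iterated cell-cancellation based on Lemma \ref{lem:splitting}, working one maximal class at a time.

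I would begin by recording the key observation that for $H$ maximal, $\uR{G/K}(H) = R[(G/K)^H] = 0$ for every $K \in \cF$ with $(K) \neq (H)$: any $H$-fixed point in $G/K$ would force $(H) \leq (K)$, contradicting maximality. Hence $F_i(H) = F_i^{(H)}(H)$ for all $i$, so $\CC(H) = \CC^{(H)}(H)$; moreover, via the equivalence $F \leftrightarrow F(H)$ between isotypic $\RG_G$-modules of type $G/H$ and $RW_G(H)$-modules (using the functor $E_H$ from the proof of Lemma \ref{lem:monotone}), $\CC^{(H)}$ corresponds to a chain complex of free $RW_G(H)$-modules.

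Assuming inductively that $d = \dim \CC(H) > n$, the vanishing $H_d(\CC(H)) = 0$ makes $\partial_d(H)$ injective. My first task is to promote this to injectivity of $\partial_d^{(H)} \colon F_d^{(H)} \to F_{d-1}^{(H)}$ as a map of $\RG_G$-modules. Writing $\partial_d^{(H)} = E_H(\phi)$ with $\phi = \partial_d(H)$, evaluation at $K \in \cF$ gives $\partial_d^{(H)}(K) = \phi \otimes_{RW_G(H)} R[(G/H)^K]$; the right $W_G(H) = N_G(H)/H$-action on $(G/H)^K$ is free (the stabilizer of any $gH$ under right translation is trivial), so $R[(G/H)^K]$ is a free, hence flat, $RW_G(H)$-module, and flatness converts injectivity of $\phi$ into injectivity of $\partial_d^{(H)}(K)$ for each $K$. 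Setting $M = \coker \partial_d^{(H)}$, I obtain a short exact sequence $0 \to F_d^{(H)} \to F_{d-1}^{(H)} \to M \to 0$ of $\RG_G$-modules. To invoke Lemma \ref{lem:splitting}, I need $M(H) = F_{d-1}(H)/\Image \partial_d(H)$ to be $R$-torsion free; the short exact sequence
$$0 \to H_{d-1}(\CC(H)) \to M(H) \to \Image \partial_{d-1}(H) \to 0$$
has right-hand term a submodule of a free $R$-module (hence free, since $R$ is a PID), and left-hand term torsion-free ($0$ or $R$) by the homology-sphere hypothesis, so $M(H)$ is torsion-free and Lemma \ref{lem:splitting} guarantees that the sequence splits and $M$ is stably free.

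Next I would choose $c \geq 0$ so that $M \oplus \uR{G/H}^c$ is free and enlarge $\CC$ by the contractible complex $T = (\uR{G/H}^c \xrightarrow{\id} \uR{G/H}^c)$ in degrees $d-1$ and $d-2$. The sum $\CC' = \CC \oplus T$ is chain homotopy equivalent to $\CC$, and the top $(H)$-isotypic differential in $\CC'$ now has free cokernel $M \oplus \uR{G/H}^c$. The resulting split sequence yields an $\RG_G$-decomposition $F_{d-1}^{(H)} \oplus \uR{G/H}^c \cong F_d^{(H)} \oplus (M \oplus \uR{G/H}^c)$ in which $\partial_d^{(H)}$ is the inclusion of the first summand; a standard column-reduction on the matrix of $\partial_d$ followed by cancellation of the matched pair $F_d^{(H)}$ produces a chain homotopy equivalent free complex $\DD$ from which the $(H)$-isotypic summands of $F_d$ have been removed. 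Since $F_d^{\neq (H)}(H) = 0$, we obtain $\dim \DD(H) \leq d - 1$; iterating $d - n$ times yields tightness at $H$. Finally, because $\uR{G/H}(H') = 0$ for any maximal $H'$ with $(H') \neq (H)$, modifications in the $(H)$-isotypic part do not affect evaluation at $H'$, so the procedure may be run successively on representatives of the finitely many conjugacy classes of maximal elements of $\cF$. The main obstacle is verifying torsion-freeness of $M(H)$ at every inductive step, which is where the homology-sphere hypothesis (preserved under chain homotopy equivalence) becomes essential.
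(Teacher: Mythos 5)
Your proposal is correct and follows essentially the same route as the paper: the paper's proof is a one-line appeal to \cite[Proposition 8.7]{hpy1} applied to the isotypic subcomplex $\CC^{(H)}$, and your write-up spells out exactly the cell-cancellation argument that proposition carries out---injectivity of $\partial_d^{(H)}$ via the $E_H$-equivalence, the split short exact sequence from Lemma \ref{lem:splitting}, stabilization by a short contractible complex, and Gaussian elimination---iterated over each maximal conjugacy class. You also correctly note that the $R$-torsion-freeness of $M(H)$ required by Lemma \ref{lem:splitting} comes from the $R$-homology $\un{n}$-sphere hypothesis, which the lemma statement leaves implicit but which is in force throughout the proof of Theorem \ref{thm:tightnessR}.
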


\begin{proof} We apply \cite[Proposition 8.7]{hpy1} to the subcomplex $\CC^{(H)}$,  for each maximal element $H \in \cF$. The key step is provided by Lemma \ref{lem:splitting}.
\end{proof}

\subsection{The inductive step}
To make the complex $\CC$ tight at every $H\in \cF$ we use a downward induction, but the situation at an intermediate step
is more complicated than the first step considered above.

Suppose that $H\in \cF$ is such that $\CC$ is tight at every $K\in \cF$ such that $(K)>(H)$. Let $\CC
^{(H)}$ denote the subcomplex of $\CC$ with free summands of type
$\uR{G/K}$ satisfying $(H)\leq (K)$. In a similar way, we can define
the subcomplex $\CC ^{>(H)} $ of $\CC$ whose free summands are of type
$\uR{G/K}$ with $(H)< (K)$. The complex $\CC ^{>{(H)}}$ is a subcomplex
of $\CC ^{(H)}$. Let us denote the quotient complex $\CC ^{(H)} / \CC ^{>(H)}$
by $\CC _{(H)}$. The complex $\CC _{(H)}$ is isotypic with isotropy type $\uR{G/H}$. We have a short exact sequence of chain complexes
of free $\RG_G$-modules
$$ 0 \to \CC ^ {>(H)} \to \CC ^{(H)} \to \CC _{(H)} \to 0.$$
By evaluating at $H$, we obtain an exact sequence of chain complexes
$$0 \to \CC^{>(H)}(H) \to \CC^{(H)}(H) \to \CC_{(H)}(H) \to 0.$$ 
Since $\CC ^{(H)}(H)=\CC (H)$ and the image of the map on the left is generated by summands of the form $R[G/K^?]$ with $(H)<(K)$, the complex $\CC _{(H)}(H)$ is isomorphic to $S_H \CC $ as an $R[N_G(H)/H]$-module. Here $S_H$ denotes is splitting functor defined more generally for any module over an EI-category (see \cite[Definition 9.26]{lueck3}).

We also have an exact sequence
$$ 0 \to \CC ^{(H)} \to
\CC \to \CC / \CC ^{(H)} \to 0.$$
 If we can show that $\CC ^{(H)}$ is
homotopy equivalent to a complex $\DD '$  which is tight at $H$, then by taking the push-out of  $\DD'$ along the injective map $\CC^{(H)} \to \CC$, we can find a complex $\DD$ homotopy equivalent to 
$\CC$ which is tight at every $K\in \cF$ with $(K) \geq (H)$. So it is enough to show that $\CC ^{(H)} $ is homotopy equivalent to a complex $\DD '$ which is tight at $H$.

\begin{lemma}\label{lem:intermediate} Let $\CC$ be a finite free chain complex of $\RG_G$-modules,  such that $\CC$ is tight at every $K\in \cF$ with $(K)>(H)$, for some $H \in \cF$.
 Suppose 
\begin{enumerate}
\item $n=\hd \CC (H)\geq \dim \CC (K)$, for all $(K)>(H)$,  
and that 
\item $H_{n+1} (S_H\CC)=0$. 
\end{enumerate}
Then $\CC ^{(H)}$ is homotopy equivalent to a finite free chain complex $\DD '$ which is tight at every $K \in \cF$ with $(K)\geq(H)$.  
\end{lemma}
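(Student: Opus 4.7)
My plan is a downward induction on the chain dimension $d = \dim \CC^{(H)}$. If $d \leq n$, then since $\hd \CC^{(H)}(H) = \hd \CC(H) = n$, tightness at $H$ is automatic (the subcomplex $\CC^{>(H)}$ is already tight by assumption, and $\CC^{(H)}(H) = \CC(H)$), so assume $d > n$ and aim to produce a chain homotopy equivalent finite free $\RG_G$-complex of chain dimension $d-1$. The key structural fact is that hypothesis~(i), combined with the assumed tightness of $\CC$ at every $(K) > (H)$, forces $\dim \CC^{>(H)} \leq n$. Consequently in every degree $i > n$ the module $\CC^{(H)}_i$ is a free isotypic $\RG_G$-module of type $G/H$; in particular the top module $\CC^{(H)}_d$ is isotypic.

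When $d > n+1$, both $\CC^{(H)}_d$ and $\CC^{(H)}_{d-1}$ are isotypic of type $G/H$, so I would apply Lemma~\ref{lem:splitting} to the top boundary $\partial_d$. The map $\partial_d(H)$ is injective because $H_d(\CC(H)) = 0$; since $\uR{G/H}(K) = R[(G/H)^K]$ is a free right $R[W_G(H)]$-module for every $K$ (the $W_G(H)$-action on $(G/H)^K$ is free), injectivity at $H$ upgrades to injectivity of $\partial_d$ as an $\RG_G$-map. The cokernel at $H$ equals $\CC(H)_{d-1}/\Image \partial_d(H) \cong \Image \partial_{d-1}(H) \subseteq \CC(H)_{d-2}$, using $H_{d-1}(\CC(H)) = 0$ for $d-1 > n$, and is therefore $R$-torsion free. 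Lemma~\ref{lem:splitting} then gives a splitting with stably free cokernel; absorbing a trivial pair $\uR{G/H} \xrightarrow{\id} \uR{G/H}$ in adjacent degrees to turn stably free into free reduces the chain dimension by one.

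The delicate case is $d = n+1$, where $\CC^{(H)}_n$ has mixed isotypic types and Lemma~\ref{lem:splitting} does not apply directly to $\partial_{n+1}$. I would pass to the quotient $\CC_{(H)} = \CC^{(H)}/\CC^{>(H)}$, which is isotypic of type $G/H$ with $\CC_{(H)}(H) = S_H \CC$. Evaluating the extension $0 \to \CC^{>(H)} \to \CC^{(H)} \to \CC_{(H)} \to 0$ at $H$ and applying the long exact sequence in reduced homology, together with $\dim \CC^{>(H)}(H) \leq n$ and hypothesis~(ii), gives $H_i(S_H \CC) = 0$ for all $i \geq n+1$. Hence the top boundary $\bar\partial_{n+1}$ of $\CC_{(H)}$ is injective at $H$ and therefore as a map of isotypic $G/H$-modules. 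I would apply Lemma~\ref{lem:splitting} inside $\CC_{(H)}$ to cancel $\CC_{(H), n+1}$ against a free summand of $\CC_{(H), n}$, and lift the cancellation back through the extension to modify $\CC^{(H)}$ itself, yielding $\DD'$ with $\dim \DD' \leq n$ and tight at every $(K) \geq (H)$.

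The main obstacle I anticipate is precisely in this last case: $R$-torsion-freeness of $\coker(\bar\partial_{n+1})(H)$ is equivalent to $R$-torsion-freeness of $H_n(S_H \CC)$, which the hypotheses do not automatically guarantee (the long exact sequence only controls it up to contributions from $H_{n-1}(\CC^{>(H)}(H))$). The plan is to resolve this either by a stabilization argument---augmenting $\CC^{(H)}$ by trivial $\uR{G/H}$-pairs to absorb any problematic torsion---or by projecting onto a cleaner sub-extension in which the torsion-free hypothesis of Lemma~\ref{lem:splitting} holds. The subsequent lifting of the cancellation through the (possibly non-split) extension $0 \to \CC^{>(H)} \to \CC^{(H)} \to \CC_{(H)} \to 0$ is delicate but essentially standard homological algebra, and iterating the reductions produces the desired tight free complex $\DD'$.
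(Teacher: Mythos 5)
Your overall plan is aligned with the paper's: pass to the isotypic quotient $\CC_{(H)} = \CC^{(H)}/\CC^{>(H)}$, observe that $\dim\CC^{>(H)} \leq n$ so $\CC_{(H)}$ is isotypic of type $G/H$ with $\CC_{(H)}(H) = S_H\CC$, extract injectivity of the top boundary from $H_d(S_H\CC) = 0$ (automatic for $d > n+1$, from hypothesis~(ii) for $d = n+1$), tighten, and return to $\CC^{(H)}$ by pullback along the surjection $\CC^{(H)} \twoheadrightarrow \CC_{(H)}$. The paper, however, never works inside $\CC^{(H)}$ directly; it always passes to $\CC_{(H)}$ and then simply invokes \cite[Proposition~8.7]{hpy1} as a black box to produce the tight complex $\DD''$, whereas you unpack that step by hand via repeated uses of Lemma~\ref{lem:splitting}. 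Your handling of the range $d > n+1$ is correct and matches what that proposition is built from.

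The genuine gap is exactly where you flag it, and your proposed fix does not work. In the case $d = n+1$, the splitting in $\CC_{(H)}$ requires $\coker(\bar\partial_{n+1})(H)$ to be $R$-torsion-free, which (as you observe) is equivalent to $H_n(S_H\CC)$ being $R$-torsion-free, and the stated hypotheses of the lemma do not give this. But ``augmenting by trivial $\uR{G/H}$-pairs'' cannot help: a trivial pair is acyclic, so stabilization leaves the chain homotopy type and therefore the homology $H_n(S_H\CC)$ unchanged; if it has $R$-torsion before stabilization it still has $R$-torsion after, and the hypothesis of Lemma~\ref{lem:splitting} still fails. The ``cleaner sub-extension'' alternative is not developed enough to assess. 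In the paper this torsion condition is precisely the additional input that \cite[Proposition~8.7]{hpy1} requires alongside the vanishing of the top homology, and in the ambient proof of Theorem~\ref{thm:tightnessR} it is supplied by the full strength of Definition~\ref{def:algrep} together with the isomorphism $H_n(\CC^{>(H)}(H)) \cong H_n(\CC(H))$ established in (the proof of) Corollary~\ref{cor:vanishing}; it does not follow from hypotheses (i) and (ii) alone (a one-module example like $\bZ \xrightarrow{2} \bZ$ already shows a free complex cannot always be tightened to its homological dimension). So your diagnosis of the obstruction is correct, but the proposal does not close it, and as written the proof is incomplete at exactly the point where the paper relies on the cited proposition.
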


\begin{proof}  Let us fix $H \in \cF$ and assume that $\CC$ is tight at every $K\in \cF$ with $(K)>(H)$. We first observe that  $\CC ^{>(H)}$ has dimension $\leq n=\hdim \CC(H)$, since $\CC^{>(H)}(K) =\CC(K)$ for $(K)>(H)$, and $\dim \CC(K) \leq n$.  Let $d=\dim \CC (H)$. If $d=n$, then we are done, so assume that $d>n$. Then $\dim \CC_{(H)} = d$, and $\CC _{(H)}$ is a complex of the
form $$\CC _{(H)}\colon 0\to F _{d} \to F_{d-1}\to \cdots \to F_1 \to F_0
\to 0.$$ 

We claim that the map $\bd _d\colon  F_d \to F_{d-1}$ in the above chain complex is injective. 
Since $\CC_{(H)}$ is isotypic of type $(H)$, it is enough to show that this map is injective when it is calculated at $H$. To see this observe that the map $\bd_d$ is the same as the map obtained by applying the functor $E_H$ to the $N_G(H)/H$-homomorphism 
$\bd_d (H)\colon  F_d(H) \to F_{d-1}(H)$ (see \cite[Lemma 9.31]{lueck3}). Since the functor $E_H$ is exact, we have $\ker \bd_d= E_H (\ker \bd _d(H))$. Hence, if $\bd_d(H)$ is injective, then $\bd_n$ is injective.

We will show that $H_d (\CC _{(H)} (H) )=H_d(S_H\CC)=0$. 
 To see this consider the short exact sequence $0\to \CC ^{>(H)}(H) \to \CC (H)\to S_H\CC \to 0$. Since the complex $\CC ^{>(H)}$ has dimension $\leq n$, the corresponding long exact sequence gives that $H_d (S_H\CC ) \cong H_d (\CC (H))=0 $ when $d>n+1$. If $d=n+1$, then this is true by assumption (ii) in the lemma. 
Now we apply \cite[Proposition 8.7]{hpy1} to $\CC_{(H)}$ to obtain a tight complex $\DD'' \simeq \CC_{(H)}$, and then let $\DD'\simeq \CC^{(H)}$ denote the pullback of $\DD''$ along the surjection $\CC^{(H)} \to  \CC_{(H)}$. 
\end{proof}

\subsection{Verifying the hypothesis for the inductive step}

To complete the proof of Theorem \ref{thm:tightnessR}, we need to show that the assumptions in Lemma \ref{lem:intermediate} hold at an intermediate step of the downward induction. We will make detailed use of the internal homological conditions (i), (ii), and (iii) in Definition \ref{def:algrep}, satisfied by an algebraic homotopy representation $\CC$. We proceed as follows:

\begin{enumerate}
\item[(1)] The  dimension assumptions in Lemma \ref{lem:intermediate} follow from the condition (i), since when $\un{n}$ is monotone, we have 
$$n:=\hd \CC (H)=\un{n} (H)\geq \un{n} (K)=\hd \CC (K)=\dim\CC (K)$$ for all $K \in \cF$ with $(K)>(H)$. 

\item[(2)] The assumption that $H_{n+1} (S_H\CC)=0$ is established in Corollary \ref{cor:vanishing}. It  follows from the conditions (ii) and (iii) and the Mayer-Vietoris argument given below.
\end{enumerate}

In the rest of the section, we assume that $\CC$ is a finite projective chain complex of $\RG_G$-modules, which is an $R$-homology $\un{n}$-sphere, and satisfies 
the conditions \textup{(i)}, \textup{(ii)}, and \textup{(iii)} in Definition \ref{def:algrep}.  Assume also that $\CC$ is tight for all $K\in \cF$ with $(K)>(H)$ for some fixed subgroup $H \in \cF$.  We will say $\CC$ is  
 \emph{tight above $H$}, for short.
  Let $\cK_H$ denote the set of all subgroups
$$\cK_H = \{ K \in \cF \vv K >H \text{\ and\ } n:= \un{n} (K)=\un{n}(H)\}.$$

Let $\CC$ be an algebraic homotopy representation, which is tight above $H$.
Let $\CC^{K} _H$ denote the image of the restriction map 
$$r^{K} _H\colon \CC (K)\to \CC (H),$$ 
for every $K \in \cF$ with $K \geq H$. 
Then $\CC _H ^{K} $ is a subcomplex of $\CC (H)$ and by Lemma \ref{lem:resmap}, it is isomorphic to $\CC (K)$. By condition (iii) of Definition \ref{def:algrep}, the collection $\cK_H$ has a unique maximal element $M$. 
In addition, we  have the equality
$$\CC^{>(H)}(H) = \sum _{K \in \cK_H} \CC ^{K} _H,$$
since $(G/K)^H$ is the union of the subspaces $ (G/K)^L $, with $L > H$ and  $(L) = (K)$. 

Moreover, if $K \in \cK_H$, then by condition (ii), the subcomplex $\CC ^{K} _H$ is an $R$-homology $n$-sphere and the map $$H_n (\CC ^M _H )\to  H_n (\CC ^{K} _H)$$
induced by the inclusion map $\CC_H ^M \hookrightarrow \CC^K_H$ is an isomorphism. More generally, the following also holds.

\begin{lemma}\label{lem:tightness}
Let $\CC$ be an algebraic homotopy representation which is tight above $H$,  for some fixed $H\in \cF$, 
and let $K_1,\dots , K_m$ be a set of subgroups in $\cK_H$.  Then the subcomplex $\sum _{i=1} ^m \CC ^{K_i} _H $ is an $R$-homology $n$-sphere and the map
\eqncount
\begin{equation}\label{statement}
H_n (\CC ^M _H )\to  H_n (\sum _{i=1} ^{m} \CC ^{K_i} _H)\end{equation}
 induced by the inclusion maps  is an isomorphism.
\end{lemma}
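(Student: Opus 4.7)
The plan is to proceed by induction on $m$, using a Mayer-Vietoris argument analogous to the one in the proof of part~(iii) of Proposition~\ref{prop:tightness conditions}.

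For the base case $m=1$: by Lemma~\ref{lem:resmap} the restriction map identifies $\CC^{K_1}_H \cong \CC(K_1)$, which is an $R$-homology $n$-sphere since $\un{n}(K_1) = n$. Because $K_1 \leq M$ (Proposition~\ref{cor:max}) and $\un{n}(M) = \un{n}(K_1) = n$, condition~(ii) of Definition~\ref{def:algrep} applied to the canonical $G$-map $G/K_1 \to G/M$ shows that the induced map $\CC(M) \to \CC(K_1)$ is an $R$-homology isomorphism. Under these identifications it corresponds to the inclusion $\CC^M_H \hookrightarrow \CC^{K_1}_H$, settling the base case.

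For the inductive step, write $A = \sum_{i=1}^{m-1}\CC^{K_i}_H$, $B = \CC^{K_m}_H$, and set $L_i = \langle K_i, K_m\rangle$ for $i < m$. Condition~(iii) of Definition~\ref{def:algrep} gives $L_i \in \cF$ with $\un{n}(L_i) = n$, and $L_i \geq K_i > H$ places $L_i$ in $\cK_H$. Using the free-module structure of the chain modules together with the fixed-point identity $X^{K_i} \cap X^{K_m} = X^{L_i}$ for $G$-sets $X$ (as in the proof of Lemma~\ref{lem:intersection}, reducing from projective to free via direct summands), we obtain
$$A \cap B \;=\; \sum_{i=1}^{m-1}\bigl(\CC^{K_i}_H \cap \CC^{K_m}_H\bigr) \;=\; \sum_{i=1}^{m-1}\CC^{L_i}_H.$$
Applying the inductive hypothesis to $\{K_1,\dots,K_{m-1}\}$ and to $\{L_1,\dots,L_{m-1}\}$, and the base case to $\{K_m\}$, shows that $A$, $A\cap B$, and $B$ are each $R$-homology $n$-spheres whose top homology receives an isomorphism from $H_n(\CC^M_H) \cong R$ via the inclusion.

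Tightness of $\CC$ above $H$ forces $\dim\CC^{K_i}_H = n$, so $\dim(A+B) \leq n$ and $H_{n+1}(A+B) = 0$. The reduced Mayer-Vietoris sequence therefore collapses to
$$0 \to H_n(A\cap B) \xrightarrow{(\iota_A,\iota_B)} H_n(A) \oplus H_n(B) \xrightarrow{j_A - j_B} H_n(A+B) \to 0,$$
with $H_i(A+B) = 0$ for $-1 \leq i < n$. Since $\iota_B$ is an isomorphism, $j_A$ is injective, hence an isomorphism between copies of $R$; the composite $H_n(\CC^M_H) \xrightarrow{\sim} H_n(A) \xrightarrow{j_A} H_n(A+B)$ equals the inclusion-induced map and is therefore an isomorphism. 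The main obstacle in this argument is the intersection formula for $A \cap B$; the distributivity of $\cap$ over $+$ used here is special to subcomplexes described by fixed-point subsets of $G$-sets, and requires the reduction from projective to free summands as in Lemma~\ref{lem:intersection}.
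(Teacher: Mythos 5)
Your proposal matches the paper's argument essentially step for step: the base case from condition (ii), the Mayer--Vietoris induction, the identity $A\cap B=\sum_i \CC^{\langle K_i,K_m\rangle}_H$ (which the paper also obtains by combining Lemma~\ref{lem:intersection} with the reduction to free summands), placing the $\langle K_i,K_m\rangle$ in $\cK_H$, and applying the inductive hypothesis to all three pieces. The only cosmetic difference is at the very end: the paper closes with a commuting diagram and the five lemma, whereas you argue directly that $\iota_B$ an isomorphism forces $j_A$ injective; to finish you should also note that $\iota_A$ is an isomorphism (as you already have from the inductive step), which makes $j_A$ surjective as well --- injectivity alone does not give an isomorphism of copies of $R$ when $R=\bbZ$.
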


\begin{proof} This follows from the Mayer-Vietoris spectral sequence in algebraic topology
(see \cite[pp.~166-168]{brown1}), which computes the homology of a union of spaces $X=\bigcup  X_i$ in terms of the homology of the subspaces and their intersections. We include a direct argument for the reader's convenience. 
  
The case $m=1$ follows from the remarks above. For $m>1$, we have the following Mayer-Vietoris type long
exact sequence
$$0 \to H_n (\DD _{m-1} \cap \CC^{K_m}_H) \to H_n (\DD_{m-1}) \oplus H_n (\CC
^{K_m} _H)  \to H_n (\DD_m ) \to H_{n-1} (\DD_{m-1} \cap \CC ^{K_m} _H )\to $$ where $\DD_j= \sum _{i=1} ^j
\CC ^{K_i } _H$ for $j=m-1,m$. By the inductive assumption, we know that $\DD _{m-1}$ is an $R$-homology $n$-sphere and the map $H_n (\CC ^M _H ) \to H_n (\DD_{m-1})$ induced by inclusion 
is an isomorphism.  

We have
 $$\DD _{m-1} \cap \CC _H ^{K_m}=(\sum _{i=1} ^{m-1} \CC ^{K_i} _H )\cap \CC ^{K_m} _H =\sum _{i=1} ^{m-1} (\CC ^{K_i} _H \cap \CC ^{K_m} _H )= \sum _{i=1} ^{m-1} \CC _H ^{\la K_i , K_m \ra}$$ where the last equality follows from  Lemma \ref{lem:intersection}. We can apply Lemma \ref{lem:intersection} here because $\CC_H^M \subseteq \CC_H ^{K}$ for all $K \in \cK_H$ gives that $\CC _H ^{K_i}\cap \CC_H ^{K_m}\neq 0$ for every $i=1,\dots, m-1$. We also obtain $\langle K_i , K_m\rangle \in \cK_H$ for all $i$. Applying our inductive assumption again to these subgroups, we conclude that $\DD _{m-1}\cap \CC ^{K_m} _H $ is an $R$-homology $n$-sphere and that the map 
$$H_n (\CC ^M _H )\to H_n (\DD _{m-1} \cap \CC ^{K_m} _H )$$
induced by inclusion 
 is an isomorphism. This gives that $H_i (\DD _m )=0$ for $i\leq n-1$. We also obtain a commuting diagram 
$$\xymatrix{0 \ar[r] & H_n (\CC ^M _H )  \ar[r] \ar[d]& H_n (\CC ^M _H )\oplus H_n (\CC ^M _H )\ar[r]\ar[d] & H_n (\CC ^M _H ) \ar[r]\ar[d]^{\varphi} & 0 \\
0 \ar[r] & H_n (\DD _{m-1} \cap \CC^{K_m}_H) \ar[r] & H_n (\DD_{m-1}) \oplus H_n (\CC
^{K_m} _H)  \ar[r] & H_n (\DD_m ) \ar[r] & 0} $$
Since all the vertical maps except the map $\varphi$ are known to be isomorphisms, we obtain that $\varphi$ is also an isomorphism
 by the five lemma. This completes the proof.
\end{proof}

\begin{corollary}\label{cor:vanishing} Let $\CC$ be an algebraic homotopy representation which is tight above $H$, for some fixed $H\in \cF$. 
Then $H_{n+1}(S_H\CC) = 0$.
\end{corollary}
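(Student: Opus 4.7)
The plan is to read off the vanishing from the long exact sequence attached to the short exact sequence of chain complexes of $R$-modules
$$0 \to \CC^{>(H)}(H) \to \CC(H) \to S_H\CC \to 0,$$
where I use the identifications $\CC^{(H)}(H) = \CC(H)$ and $S_H\CC \cong \CC_{(H)}(H) = \CC^{(H)}(H)/\CC^{>(H)}(H)$ recorded in the discussion preceding Lemma \ref{lem:intermediate}. The relevant segment of the homology sequence reads
$$H_{n+1}(\CC(H)) \to H_{n+1}(S_H\CC) \to H_n(\CC^{>(H)}(H)) \xrightarrow{\iota_*} H_n(\CC(H)),$$
and its leftmost term vanishes because $\CC$ is an $R$-homology $\un{n}$-sphere with $\un{n}(H) = n$. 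I am therefore reduced to showing that the inclusion-induced map $\iota_*$ is injective.

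I would then split into two cases. If $\cK_H = \emptyset$, every $K \in \cF$ with $K > H$ satisfies $\un{n}(K) < n$, and tightness of $\CC$ above $H$ gives $\dim \CC(K) = \un{n}(K) < n$, so $\dim \CC^{>(H)}(H) < n$ and $H_n(\CC^{>(H)}(H)) = 0$ for free. In the substantive case $\cK_H \neq \emptyset$, I enumerate $\cK_H = \{K_1, \ldots, K_m\}$ and invoke the identification $\CC^{>(H)}(H) = \sum_{i=1}^{m} \CC^{K_i}_H$ already recorded in the paper. Let $M$ be the unique maximal element of $\cK_H$ supplied by Proposition \ref{cor:max} (which uses condition (iii) of Definition \ref{def:algrep}). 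Lemma \ref{lem:tightness} then asserts that the inclusion $\CC^M_H \hookrightarrow \sum_{i=1}^{m} \CC^{K_i}_H$ induces an isomorphism on $H_n$.

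To close the argument, I factor $\iota_*$ through this inclusion by composing with $\CC^M_H \hookrightarrow \CC(H)$. Under the identification $\CC^M_H \cong \CC(M)$ from Lemma \ref{lem:resmap}, that composite corresponds to the restriction map $r^M_H \colon \CC(M) \to \CC(H)$, which is an $R$-homology isomorphism by condition (ii) of Definition \ref{def:algrep}, applicable because $\un{n}(M) = n = \un{n}(H)$. Thus the composition
$$H_n(\CC^M_H) \xrightarrow{\cong} H_n\bigl(\textstyle\sum_{i=1}^{m} \CC^{K_i}_H\bigr) \xrightarrow{\iota_*} H_n(\CC(H))$$
is an isomorphism; since its first factor is already an isomorphism, the second factor $\iota_*$ is injective (in fact an isomorphism). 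Feeding this back into the long exact sequence yields $H_{n+1}(S_H\CC) = 0$. There is no real obstacle beyond bookkeeping: the substantive content lives in Lemma \ref{lem:tightness}, and the present corollary simply splices that Mayer--Vietoris output onto the restriction-is-homology-isomorphism input from condition (ii).
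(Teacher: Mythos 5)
Your proof is correct and follows essentially the same route as the paper: both run the long exact sequence of $0 \to \CC^{>(H)}(H) \to \CC(H) \to S_H\CC \to 0$ and feed in Lemma~\ref{lem:tightness} together with condition (ii) of Definition~\ref{def:algrep} to control the inclusion-induced map on $H_n$. You add the bookkeeping case $\cK_H = \emptyset$, which the paper's proof leaves implicit, but that is a minor refinement rather than a different argument.
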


\begin{proof} Let $\cK_H=\{K_1, \dots , K_m\}$. By condition (ii), we know that the composition 
$$H_n (\CC (M) ) \xrightarrow{\cong} H_n (\CC ^M _H )\to H_n (\sum _{i=1} ^{m} \CC ^{K_i} _H)\to H_n (\CC (H))$$
is an isomorphism. However, we have just proved that the middle map is an isomorphism, and that all the modules involved in the composition are isomorphic to $R$. Therefore, the map induced by inclusion
$$H_n (\sum _{i=1} ^{m} \CC ^{K_i} _H)\to H_n (\CC (H))$$
is an isomorphism. 
Since $\CC$ is tight above $H$,  we have $\dim \CC (K)<n$ whenever $(H)\leq (K)$ and $\un{n} (K) < n$, for some $K\in \cF$.  This implies the relation
$$H_n(\CC^{>(H)}(H))= H_n (\sum _{i=1} ^{m} \CC ^{K_i} _H) \cong H_n(\CC(H))$$ where the isomorphism is induced by the the inclusion of chain complexes. 
From the exact sequence $0 \to \CC^{>(H)}(H) \to \CC(H) \to S_H\CC \to 0$, 
 and the fact that $\hd\CC (H) =n$, we conclude that
$H_{n+1}(S_H\CC)=0$, as required. 
\end{proof}

This completes the proof of Theorem \ref{thm:tightnessR} and hence the proof of Theorem A. In \cite{hpy1}, we proved the following realization theorem for free $\ZG_G$-module chain complexes,  with respect to any family $\cF$, which are $\bZ$-homology $\un{n}$-spheres satisfying certain extra conditions.

\begin{theorem}[{\cite[Theorem 8.10]{hpy1}, \cite{pamuks1}}]\label{thm:realization} Let $\CC$ be a finite  
chain complex of free $\ZG_G$-modules which is a $\bZ$-homology $\un{n}$-sphere.
Suppose that $\un{n} (K) \geq 3$ for all $K \in \cF$. If $\CC _i
(H)=0$ for all $i > \un{n} (H) +1$, and all $H\in \cF$, then there
is a finite $G$-CW-complex $X$ with isotropy in $\cF$, 
such that $\uCZ{X}$ is chain homotopy
equivalent to $\CC$ as chain complexes of $\ZG_G$-modules.
\end{theorem}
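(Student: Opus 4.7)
The plan is to construct $X$ by an equivariant skeletal induction. We build a nested tower of finite $G$-CW complexes $\emptyset = X^{(-1)} \subseteq X^{(0)} \subseteq X^{(1)} \subseteq \cdots \subseteq X^{(N)} =: X$ with isotropy in $\cF$, together with chain homotopy equivalences $\phi^{(i)}\colon \uCZ{X^{(i)}} \simeq \tau_{\leq i}\CC$ compatible with the inclusions. The hypothesis $\CC_i(H) = 0$ for $i > \un{n}(H)+1$ ensures that $\CC$ vanishes above degree $N := \max_{H \in \cF}(\un{n}(H)+1)$, so the tower is finite and yields the desired $X = X^{(N)}$.

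For the base case, fix a free decomposition $\CC_0 = \bigoplus_\alpha \uR{G/K_\alpha}$ with $K_\alpha \in \cF$, and set $X^{(0)} = \bigsqcup_\alpha G/K_\alpha$. For the inductive step at dimension $i$, fix a free basis $\CC_i = \bigoplus_\beta \uR{G/L_\beta}$, so that $X^{(i)}$ is obtained from $X^{(i-1)}$ by attaching $G$-cells of the form $G/L_\beta \times D^i$ via attaching maps $\alpha_\beta\colon G/L_\beta \times S^{i-1} \to X^{(i-1)}$. By the $G$-map adjunction, each $\alpha_\beta$ corresponds to a $W_G(L_\beta)$-equivariant map $S^{i-1} \to (X^{(i-1)})^{L_\beta}$, and its desired cellular class is the one prescribed by $\bd_i$ in $\CC_{i-1}(L_\beta)$, transported through $\phi^{(i-1)}$.

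The workhorse is equivariant obstruction theory applied fixed-point set by fixed-point set. The hypothesis $\un{n}(K) \geq 3$ for all $K \in \cF$ is used to ensure that each fixed-point set $(X^{(i-1)})^L$ constructed at any stage is simply connected, so the equivariant Hurewicz and Whitehead theorems apply and chain-level boundary data suffice to determine the $W_G(L)$-equivariant homotopy class of the attaching map. To coordinate the construction across isotropy types, at each fixed $i$ we attach the $G/L$-cells in order of the partial order on $\cF$, with larger conjugacy classes first, so that by the time we attach a $G/L_\beta$-cell, all cells of type $G/L$ with $(L) > (L_\beta)$ are already in place and contribute to $(X^{(i-1)})^{L_\beta}$.

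The main obstacle will be realizing each prescribed chain-level boundary class as a genuine equivariant homotopy class of attaching map, compatibly across all isotropy types. Concretely, one meets Bredon-type obstructions in equivariant cohomology of $X^{(i-1)}$ with coefficients in the system $L \mapsto \pi_{i-1}((X^{(i-1)})^L)$; the simple-connectivity guaranteed by $\un{n}(K) \geq 3$ reduces these coefficient systems via Hurewicz to their homology analogues, where the existence of the required lifts follows from the chain equivalence $\phi^{(i-1)}$. Iterating through $i = 1, \ldots, N$ produces the desired $X$ with $\uCZ{X} \simeq \CC$ as chain complexes of $\ZG_G$-modules.
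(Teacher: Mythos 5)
The paper does not prove Theorem \ref{thm:realization} here; it is quoted from \cite[Theorem~8.10]{hpy1} and \cite{pamuks1}. Your skeletal-induction plan is in the same spirit as that proof, so the architecture is sound, but there is a genuine gap at the step on which the whole argument leans hardest.

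You write that ``the hypothesis $\un{n}(K)\geq 3$ for all $K\in\cF$ is used to ensure that each fixed-point set $(X^{(i-1)})^L$ constructed at any stage is simply connected,'' and then treat this as automatic. It is not: simple connectedness of the fixed sets is a property the construction has to manufacture, not one that is granted by the hypotheses. If you attach $2$-cells literally along the boundary map $\bd_2$ of a fixed free basis of $\CC_2$, the attaching words in $\pi_1\bigl((X^{(1)})^L\bigr)$ are only determined up to the commutator subgroup; the requirement that the cellular boundary equal $\bd_2$ constrains their abelianizations but says nothing about whether their normal closure exhausts $\pi_1$. There are perfect groups with balanced presentations, so $H_1=0$ of the resulting $2$-complex does not imply $\pi_1 = 1$. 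The hypothesis $\un{n}(K)\geq 3$, which gives $H_1(\CC(K)) = H_2(\CC(K)) = 0$ for every $K\in\cF$, is what makes a simply-connected $G$-$2$-skeleton \emph{possible}: it guarantees that $\CC$ can be replaced, up to chain homotopy equivalence of finite free complexes over $\ZG_G$, by a complex whose low-dimensional part is realizable by a $G$-$2$-complex with all nonempty fixed sets simply connected. But this normalization must be carried out explicitly before the cell-by-cell induction can begin; it does not happen by fiat. Remark \ref{simplyconnected} records that the construction in \cite{hpy1} \emph{produces} such a complex precisely because this step is built in. Without it, the appeal to Hurewicz and Whitehead at the start of your induction has no footing.

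A secondary point: you invoke the hypothesis $\CC_i(H)=0$ for $i>\un{n}(H)+1$ only to bound the overall dimension $N$. It is really a pointwise constraint: it ensures that cells of orbit type $G/H$ are never attached above dimension $\un{n}(H)+1$, so that each fixed set $X^H$ comes out with the correct CW-dimension and the induction terminates coherently at different dimensions for different isotropy types. Flattening it to a single global bound $N$ loses what it is there for.
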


Note that a $\bZ$-homology $\un{n}$-sphere $\CC$  with $\Dim \CC = \un{n}$, and  $\un{n} (K) \geq 3$ for all $K\in \cF$, will automatically satisfy these conditions. So Corollary B follows immediately from
Theorem A and Theorem \ref{thm:realization}.

\begin{remark}\label{simplyconnected} The construction actually produces a finite $G$-CW-complex $X$ with the additional property that all the non-empty fixed sets $X^H$ are simply-connected. Moreover, by construction, $W_G(H) = N_G(H)/H$ will act trivially on the homology of $X^H$. Therefore $X$ will be an  \emph{oriented} geometric homotopy representation (in the sense of tom Dieck). From the perspective of Theorem A, since we don't specify any dimension function, a $G$-CW-complex $X$ with all fixed sets $X^H$ integral homology spheres will lead (by three-fold join) to a homotopy representation. The same necessary and sufficient conditions for existence apply.  
\end{remark}

\section{Inflation and deflation of chain complexes}\label{sec:deflation}
In this section we define two general operations on chain complexes in preparation for the proof of Theorem C.  For a finite $G$-CW complex $X$ which is a mod-$p$ homology sphere, the Borel-Smith conditions can be
proved using a reduction argument to certain $p$-group subquotients (compare \cite[III.4]{tomDieck2}). 
For a subquotient $K/L$, the reduction comes from considering the
fixed point space $X^L$ as a $K$-space. To do a similar reduction for chain complexes over $\RG _G$, we first introduce a new functor for $\RG _G$-modules, called the \emph{deflation} functor. We will introduce this functor as a restriction functor between corresponding module categories. For this discussion $R$ can be taken as any commutative ring with $1$ and $\cF_G$ is any family subject to the extra conditions we assume during the construction.

Let $N$ be a normal subgroup of $G$. We define a functor $$F \colon
\G _{G/N} \to \G _G $$ by considering a $G/N$-set (or $G/N$-map) as
a $G$-set (or $G$-map) via composition with the quotient map $G \to G/N$. For this definition to make sense, the families $\cF
_{G/N}$ and $\cF _G$ should satisfy the property that if $K\geq N$ is such that $(K/N) \in
\cF _{G/N} $, then $K \in \cF _G$. Since we always
assume the families are nonempty, the above assumption also implies
that $N \in \cF_G$. For notational simplicity from now on, let us denote $K/N$ by $\overline K$ for every $K \geq N$.

If a family $\cF_G$ is already given, we will always take $\cF
_{G/N} = \{ \overline  K \vv  K \geq N \text{ and } K \in \cF_G \}$ and the condition
above will be automatically satisfied. We also assume that $N \in \cF _G$ to have a nonempty family for
$\cF _{G/N}$.

The functor $F$ gives rise to two functors  (see \cite[9.15]{lueck3}): 
$$\res _F \colon  \Mod\text{-}\RG_{G} \to \Mod\text{-}\RG_{G/N}$$ and
$$\ind _F \colon  \Mod\text{-}\RG_{G/N} \to \Mod\text{-}\RG_{G}\ .$$
The first functor $\res _F$ takes a $\RG _G$-module $M$ to the $\RG
_{G/N}$-module $$\Def ^G _{G/N} (M):= M \circ F \colon  \G _{G/N} \to
\RMod.$$ We call this functor the {\em deflation functor}. Note that
$$(\Def ^G _{G/N} M ) (\overline K)=M(K).$$
The induction functor $ \Inf _{G/N} ^G:= \ind _F$ associated to $F$
is called the {\em inflation functor}. 
For every $H \in \cF_G$, we have $$\Inf ^G _{G/N} (M) (H)= \Bigl 
(\bigoplus\nolimits _{\overline{K} \in \cF _{G/N}} M(\overline{K})\otimes _{R W_{\overline G} (\overline{K})} R\Map_G (G/H, G/K) \Bigr ) / \sim$$
where the relations come from the tensor product over $R\G_{G/N}$ (see \cite[Definition 9.12]{lueck3}). In general, it can be difficult to calculate $\Inf ^G _{G/N} M$ for an arbitrary $R\G _{G/N}$-module $M$. In the case where $M$ is a free $R\G_{G/N}$-module we have the following lemma.

\begin{lemma}\label{lem:inflation} Let $X$ be a finite $G/N$-set. Then,
we have $$\Inf ^G _{G/N} \uR{X}= \uRb{(\Inf ^G _{G/N} X)}.$$ 
\end{lemma}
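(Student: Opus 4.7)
The plan is to reduce the statement to the case of a single transitive $G/N$-set, and then conclude via the Yoneda lemma applied to the inflation--deflation adjunction.

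First, decompose $X$ as a disjoint union of transitive $G/N$-sets, $X = \bigsqcup_i (G/N)/\overline{K_i}$ with each $K_i \geq N$ and $\overline{K_i} \in \cF_{G/N}$. The functor $Y \mapsto \uR{Y}$ sends disjoint unions of $G/N$-sets to direct sums, and $\Inf^G_{G/N} = \ind_F$ is a left adjoint of $\Def^G_{G/N} = \res_F$, hence preserves direct sums. The functor $F$ also manifestly respects disjoint unions, and sends a single orbit $(G/N)/\overline{K}$ to $G/K$ as a $G$-set. Hence the problem reduces to showing $\Inf^G_{G/N}\uR{(G/N)/\overline{K}} \cong \uRb{G/K}$ for each $K \geq N$ with $K \in \cF_G$.

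For this transitive case, I would argue by the Yoneda lemma. The free module $\uR{(G/N)/\overline{K}}$ represents the functor $M \mapsto M(\overline{K})$ on $R\G_{G/N}$-modules, while $\uRb{G/K}$ represents $N \mapsto N(K)$ on $R\G_G$-modules. For any $R\G_G$-module $N$, the adjunction yields natural isomorphisms
$$\Hom_{R\G_G}\bigl(\Inf^G_{G/N}\uR{(G/N)/\overline{K}},\, N\bigr) \cong \Hom_{R\G_{G/N}}\bigl(\uR{(G/N)/\overline{K}},\, \Def^G_{G/N} N\bigr) \cong \bigl(\Def^G_{G/N} N\bigr)(\overline{K}) = N(K),$$
which matches $\Hom_{R\G_G}(\uRb{G/K}, N) = N(K)$. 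The Yoneda lemma then delivers the required isomorphism.

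The main obstacle will be to make this isomorphism canonical and natural in $X$, so that the decomposition in the first step assembles into a single, well-defined isomorphism of $R\G_G$-modules rather than just an abstract isomorphism of the summands. Since both constructions are functorial in $X$, the naturality check reduces to verifying that a $G/N$-map $(G/N)/\overline{K_i} \to (G/N)/\overline{K_j}$, viewed under $F$ as the corresponding $G$-map $G/K_i \to G/K_j$, induces the expected maps on each side. This amounts to a bookkeeping check using the definition of $F$ as pre-composition with the quotient homomorphism $G \to G/N$, after which the two free $R\G_G$-modules are identified orbit-by-orbit.
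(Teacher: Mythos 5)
Your proof is correct, and it takes a genuinely different route from the paper's. After the standard reduction to a single orbit $X = \oG/\oK$, the paper writes the free module $\uRb{(\oG/\oK)}$ as $E_{\oK}P_{\oK}$ with $P_{\oK} = R[W_{\oG}(\oK)]$, identifies $E_{\oK}$ with induction $\Ind_{F'}$ along the inclusion of the one-object category $W_{\oG}(\oK)$ into $\G_{G/N}$, and then uses transitivity of induction, $\Ind_F \Ind_{F'} = \Ind_{F\circ F'}$, together with the identification of $F\circ F'$ with the inclusion $W_G(K) \hookrightarrow \G_G$ to land on $E_K R[W_G(K)] = \uRb{G/K}$. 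You instead compute the co-represented functor of $\Inf^G_{G/N}\uR{(\oG/\oK)}$ directly: using the adjunction $\ind_F \dashv \res_F$ (a general property of Kan extensions, implicit in the paper's appeal to L\"uck~\cite[9.24]{lueck3}) plus the Yoneda lemma twice, once over $R\G_{G/N}$ and once over $R\G_G$, together with the observation $(\Def^G_{G/N}M)(\oK) = M(K)$. Both arguments are short; the paper's requires familiarity with the $E_H$-construction and its description as $\Ind_{F'}$, whereas yours trades that for the adjunction and Yoneda, which is arguably more transparent and automatically produces the natural isomorphism without any further bookkeeping. Your closing paragraph about assembling the orbit-by-orbit isomorphisms is unnecessary overhead: once you have the representable characterization naturally in $N$, Yoneda already gives you a canonical isomorphism of modules, and both sides of the lemma are manifestly additive in $X$, so the decomposition into orbits assembles for free.
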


\begin{proof} It is enough to show this when $X=\oG/\oK$ for some $K\leq G$ such that $K \geq N$.  In this case, $\uRb{(\oG/\oK)}$ is isomorphic to $E_{\oK}P_{\oK}$ where $P_{\oK}= R[W_{\oG}(\oK)]$. Since $E_{\oK}(-)$ is defined as induction $\Ind _{F'}(-)$ for the functor $F': R[W_{\oG} (\oK)]\to R\G_{G/N}$ (see \cite[9.30]{lueck3}), we have 
$$ \Inf ^G _{G/N} \uRb{(\oG/\oK)}=\Inf ^G _{G/N} E_{\oK} P_{\oK}= \Ind _{F} \Ind _{F'} P_{\oK}=\Ind _{F\circ F'} P_{\oK} $$
where $F: \G_{G/N} \to \G _G $ is the functor defined above. Since $W_{\oG}(\oK)\cong W_G (K)$, after suitable identification, the composition $F\circ F'$ becomes the same as the inclusion functor $i: W_G (K) \to \G_G$, so we have $$\ind _{F \circ F'}P_{\oK}=E_{K} RW_G (K) =\uRb{G/K}$$ as desired.
\end{proof}

By general properties of restriction and induction
functors associated to a functor $F$, the functor $\Def ^G _{G/N}$
is exact and $\Inf ^G _{G/N}$ respects projectives  (see \cite[9.24]{lueck3}). The deflation functor has the following formula for free modules.

\begin{lemma}\label{lem:deflation} Let $X$ be a $G$-set. Then,
we have $$\Def ^G _{G/N} \uR{X}= \uRb{(X^N)}$$
 In particular, if  $H
\in \cF_G$ implies $HN \in \cF_G$, then the functor $\Def ^G _{G/N}$
respects projectives.
\end{lemma}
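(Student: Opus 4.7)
The plan is to establish the main identity $\Def^G_{G/N}\uR{X}=\uRb{(X^N)}$ by a direct pointwise verification, and then deduce the projectivity claim by reducing to the case of free generators.

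For the main identity, I would evaluate both sides at an arbitrary object $\oG/\oK$ of $\G_{G/N}$, where $\oK=K/N$ for some $K\geq N$ with $K\in\cF_G$. By the definition of the restriction functor along $F\colon \G_{G/N}\to \G_G$, the left side becomes
$\uR{X}(F(\oG/\oK))=\uR{X}(G/K)=R[X^K]$.
The right side is $R[(X^N)^{\oK}]$, where $X^N$ is a $G/N$-set in the natural way (since $N$ is normal and acts trivially on $X^N$). Since $K\geq N$, a point of $X^N$ is $\oK$-fixed exactly when it is $K$-fixed, so $(X^N)^{\oK}=X^K$ and the two sides agree. Compatibility with morphisms is automatic from the construction of $F$: any $G/N$-map $\bar g\oK\mapsto \bar g\oL$ lifts under $F$ to the $G$-map $gK\mapsto gL$, and on either side of the identity this induces multiplication by $g$ on the coefficient $R$-modules.

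For the ``in particular'' clause, I would apply the identity to $X=G/H$ with $H\in\cF_G$. A coset $gH$ lies in $(G/H)^N$ iff $g^{-1}Ng\leq H$, which since $N$ is normal in $G$ is equivalent to $N\leq H$. Hence $(G/H)^N$ is empty when $N\not\leq H$, and equals the $G/N$-set $\oG/\oH$ otherwise. By the formula, $\Def^G_{G/N}\uR{G/H}$ is therefore either $0$ or the free module $\uRb{(\oG/\oH)}$; in the second case $\oH=H/N$ lies in $\cF_{G/N}$ because $H\in\cF_G$ and $H\geq N$. The standing hypothesis $H\in\cF_G\Rightarrow HN\in\cF_G$ is the compatibility condition that guarantees $\cF_{G/N}$ is the natural family containing every subgroup that can arise from the deflation, so the output is a bona fide free module in $\Mod\text{-}R\G_{G/N}$. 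Since $\Def^G_{G/N}$ is manifestly exact and additive, this extends: it carries every direct summand of a direct sum of free generators to a direct summand of a direct sum of free generators, and so sends projectives to projectives.

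The only step requiring any care is the identification $(X^N)^{\oK}=X^K$ under the containment $K\geq N$, together with the bookkeeping that relates $\cF_G$ and $\cF_{G/N}$ via the hypothesis; apart from this, the argument is a routine unraveling of the definitions of $F$ and of restriction along $F$.
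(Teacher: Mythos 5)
Your main identity is verified exactly as in the paper: evaluate both sides at $\oG/\oK$, note $(\Def^G_{G/N}\uR{X})(\oK)=R[X^K]$ on one side, and use $K\geq N$ to get $(X^N)^{\oK}=X^K$ on the other, with naturality coming for free from the definition of $F$.

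For the projectivity clause you are more careful than the paper, and this is worth flagging. The paper asserts in one line that $(G/H)^N = G/HN$ as $G/N$-sets, and then invokes the hypothesis $H\in\cF_G\Rightarrow HN\in\cF_G$ to conclude $\overline{HN}\in\cF_{G/N}$ and hence freeness. But since $N$ is \emph{normal}, $g^{-1}Ng=N$ for all $g$, so $(G/H)^N$ is either all of $G/H$ (when $N\leq H$) or empty; for $N\not\leq H$ one has $(G/H)^N=\emptyset$ while $G/HN\neq\emptyset$, so the paper's displayed identity fails in that case (the quotient $(G/H)/N\cong G/HN$ holds but is not what is needed). Your two-case computation is the correct one, and it actually shows that, in this normal-$N$ setting, the hypothesis $H\in\cF_G\Rightarrow HN\in\cF_G$ plays no role: when the deflation is nonzero one has $N\leq H$, hence $HN=H\in\cF_G$ automatically and $\oH\in\cF_{G/N}$ by the very definition of $\cF_{G/N}$. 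Your closing sentence, which tries to attribute a load-bearing role to that hypothesis (``guarantees $\cF_{G/N}$ is the natural family containing every subgroup that can arise''), therefore overstates its function and sits slightly at odds with the case analysis you just gave; you should either drop that gloss or note explicitly that the hypothesis is vacuous once $N$ is normal. Apart from that, the argument is complete.
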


\begin{proof} For every $K\in \cF _G$  such that $K\geq N$, we have
\begin{equation*}
\begin{split}
(\Def ^G _{G/N} \uR{X})(\overline K)&=\uR{X} (K)=R[X^K] 
=R[(X^N)^{K/N}]=\uRb{(X^N)} (\overline K). \\
\end{split}
\end{equation*}
Note that $(G/H)^N= G/HN$ as a 
$G/N$-set. If $H \in \cF_G$
implies $HN \in \cF_G$, then by assumption  $\overline{HN} \in \cF_{G/N}$.
Hence $\uRb{{((G/H)^N)}}$ is free as an $R\G_{G/N}$-module and $\Def ^G_{G/N}$ respects
projectives.
\end{proof}

\section{The Borel-Smith conditions for chain complexes}
\label{sect:Borel-Smith}

Let $G$ be a finite group, and let $X$ be a finite $G$-CW-complex which is a mod-$p$ homology sphere for some prime $p$. Then by Smith theory, the fixed point space $X^H$ is also a mod-$p$ homology sphere (or empty), for every $p$-subgroup $H\leq G$. So if we take $R=\bZ/p$ and $\G_G$ as the orbit category over the family $\cF_p$ of all $p$-subgroups of $G$, then the chain complex $\uCZ{X}$ over $R\G_G$ is a finite free chain complex which is an $R$-homology $\un{n}$-sphere. Here, as before, we take $\un{n}(H)=-1$ when $X^H=\emptyset$. In this case, it is known that
the super class function $\un{n}$ satisfies certain conditions
called the Borel-Smith conditions (see  \cite[Thm. 2.3 in Chapter XIII]{borel-seminar} or \cite[III.5]{tomDieck2}). These conditions are given as
follows:

\begin{definition}\label{def:Borel-Smith} Let $G$ be a finite group and 
let $f \colon  \Sub  \to \bZ$ be super class function, where $\Sub$ denotes the family of all subgroups of $G$.  We say the function $f$ satisfies the Borel-Smith conditions at a prime $p$,  if it has the following properties: 

\begin{enumerate}\addtolength{\itemsep}{0.2\baselineskip}
\item If $L \snor K\leq G$ are such that $K/L \cong \cy{p}$, and $p$ is odd,
then $f(L)-f(K)$ is even.

\item If $L\snor K\leq G$ are such that $K/L \cong \cy{p} \times \cy{p}$, and if
$L_i/L$ denote the subgroups of order $p$ in $K/L$, then $$f(L)-f(K)=\sum
_{i=0} ^p (f(L_i)-f(K)).$$  
\item If $p=2$, and $L\snor K \snor N \leq G$ are such that $L \snor N$, $K/L \cong \cy{2}$, and $N/L \cong \cy 4$,  then
$f(L)-f(K)$ is even.
\item  If $p=2$, and $L\snor K \snor N \leq G$ are such that $L \snor N$,  $K/L \cong \cy{2}$, and $N/L=Q_8$ is
 the quaternion group of order $8$, then  $f(L)-f(K)$ is
divisible by $4$.
\end{enumerate}
\end{definition}

We will show that these conditions are satisfied by the homological dimension
function $\un{n}$ of a finite projective complex $\CC$ over $\RG _G$ which is an $R$-homology $\un{n}$-sphere.  
Recall that $\un{n}(H) = -1$ whenever $H \notin \cF$, by Definition \ref{def:Rhomologysphere}.

\begin{thmc}
Let $G$ be a finite group, 
$R=\bZ/p$, and let $\cF$ be a given family of subgroups of $G$. If $\CC$ is a finite projective chain complex over $\RG _G$, which is an $R$-homology
$\un{n}$-sphere, then the function $\un{n}$ satisfies the Borel-Smith conditions  at the prime $p$.
\end{thmc}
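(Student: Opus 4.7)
The approach is to reduce each Borel--Smith relation to a statement about a finite projective chain complex over $R\G_Q$ for a $p$-group $Q$ arising as one of the distinguished subquotients in the relation (namely $K/L$ in condition (i), or $N/L$ in conditions (ii)--(iv)), and then to verify the reduced statement by Smith-theoretic arguments at the chain level. The reduction is carried out via the deflation functor of Section~\ref{sec:deflation}: given Borel--Smith data with subquotient $Q$, I restrict $\CC$ to the relevant overgroup (say $K$ or $N$) and apply the corresponding deflation to produce a finite projective chain complex $\DD$ over $R\G_Q$, with $\DD(\bar H) = \CC(H)$ for each subgroup $H$ between $L$ and the overgroup, where $\bar H = H/L$. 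Exactness of deflation (Section~\ref{sec:deflation}) then shows that $\DD$ is an $R$-homology $\un{m}$-sphere with $\un{m}(\bar H) = \un{n}(H)$, reducing matters to proving the Borel--Smith identity for $\un{m}$ on the $p$-group $Q$ itself.

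For $Q = \bZ/p$ with $p$ odd, condition (i) follows from a chain-level Lefschetz argument. The $R[Q]$-module $\DD_n(e)$ decomposes as a direct sum of summands $R[Q/H_i]$, on each of which a generator $t \in Q$ acts by permutation, so the trace of $t$ on each summand is $|(Q/H_i)^Q|$ --- namely $1$ if $H_i = Q$ and $0$ if $H_i = \{e\}$. Summing, $\operatorname{tr}(t \mid \DD_n(e)) = \dim_R \DD_n(Q)$, so the global Lefschetz number of $t$ equals $\chi(\DD(Q)) = 1 + (-1)^{\un{m}(Q)}$. On the other hand, on $H_*(\DD(e); R) \cong H_*(S^{\un{m}(e)}; R)$ the element $t$ acts by a $p$-th root of unity in $R = \bZ/p$, which must be $1$; hence the Lefschetz number also equals $1 + (-1)^{\un{m}(e)}$. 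For odd $p$ this forces $\un{m}(e) \equiv \un{m}(Q) \pmod 2$. Condition (ii) for $Q = (\bZ/p)^2$ is handled analogously by summing the Lefschetz identity over the $p+1$ subgroups of $Q$ of order $p$, or equivalently through the Borel spectral sequence
$$E_2^{s,t} = H^s(Q; H_t(\DD(e); R)) \;\Longrightarrow\; H^{s+t}_Q(\DD(e); R)$$
and the localization isomorphism identifying the localized abutment with the analogous sequence for the fixed loci $\DD(\bar L_i)$.

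The principal obstacle is conditions (iii) and (iv), where $p = 2$ and the Lefschetz/Euler characteristic argument gives no information (every sphere has even mod-$2$ Euler characteristic). Here $Q = N/L$ is $\bZ/4$ or $Q_8$ and the distinguished subgroup is $\bar K = K/L \cong \bZ/2$. The plan is to exploit the finer structure of $H^*(Q; \bZ/2)$: for $\bZ/4$, the nontrivial Bockstein pairing between the degree-$1$ exterior generator and the degree-$2$ polynomial generator; for $Q_8$, the $4$-periodicity of the cohomology ring. Running the Borel spectral sequence for $\DD(e)$ and comparing the two nontrivial rows $t = 0$ and $t = \un{m}(e)$, the shape of the differentials forced by these Bockstein/periodicity structures together with the prescribed form of $H_*(\DD(\bar K); R)$ gives the required parity (respectively, divisibility by $4$) of $\un{m}(e) - \un{m}(\bar K)$. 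This is the chain-level transcription of the classical arguments recorded in \cite[III.5]{tomDieck2}, feasible because every operation involved (Bockstein, Steenrod operations, localization in equivariant cohomology) is defined purely algebraically for projective chain complexes over $R\G_Q$.
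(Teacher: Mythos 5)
Your reduction to $p$-group subquotients via the deflation functor is exactly the paper's first move, including the observation that restriction and deflation preserve projectivity (though you should note explicitly that preservation of projectives under $\Def^K_{K/L}$ requires first extending $\cF$ to the family of all subgroups, as otherwise the hypothesis of Lemma~\ref{lem:deflation} may fail). Your Lefschetz/Hopf-trace argument for condition~(i) over $Q=\cy p$ is correct and is a genuine alternative to the paper's argument: the paper instead passes to the quotient $\DD=\widetilde\CC/\widetilde\CC^{(Q)}$, evaluates at the trivial subgroup, and uses the period-$2$ periodicity of a free $R[\cy p]$-resolution sandwiched between two copies of $R$. The trace calculation you give is closer to the classical Smith-theoretic proof for $G$-CW-complexes and is arguably more elementary; the care you take to show that a generator acts trivially on the top homology $R$ (since $\gcd(p,p-1)=1$) is the one point that needs saying, and you say it.

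However, for conditions~(ii)--(iv) there is a genuine gap, and it sits precisely where the paper does its real technical work. Your proposal runs the \emph{Borel} spectral sequence $E_2^{s,t}=H^s(Q;H_t(\DD(e);R))$ for the $R[Q]$-chain complex $\DD(e)$ and then appeals to ``the localization isomorphism'' and, for $p=2$, to Bocksteins and Steenrod operations, asserting that ``every operation involved\ldots is defined purely algebraically for projective chain complexes over $R\G_Q$.'' This is the claim that needs proof, and it is false as stated at the level of generality you invoke: a projective chain complex over $R\G_Q$ has no given $E_\infty$- or cup-product structure, so Steenrod operations on its hypercohomology are not automatic, and the Quillen/Borel localization theorem (an isomorphism $H^*_Q(X)[\,S^{-1}]\cong H^*_Q(X^Q)[\,S^{-1}]$) is a statement about topological $Q$-spaces whose chain-level analogue is not a formality. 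The paper avoids all of this. Its substitute is the hyper-Ext spectral sequence
$$E_2^{s,t}=\Ext^s_{R\G_Q}(H_t(\CC),\,I_1R)\ \Longrightarrow\ H^{s+t}\bigl(\Hom_{R\G_Q}(\CC,I_1R)\bigr)$$
over the \emph{orbit category} (not over $R[Q]$), which vanishes in total degree $>\dim\CC$ because $\CC$ is finite projective; the reduced homology of $\CC$ decomposes into atomic modules $I_HR$, and the paper explicitly computes $\Ext^*_{R\G_Q}(I_HR,I_1R)$ for all relevant $H$ via the auxiliary modules $J_HR$ (Lemmas~\ref{lem:Ext-calc} and~\ref{lem:Ext-calc for S_i} and their analogues in Lemmas~\ref{lem:Case (iii)} and~\ref{lem:Case (iv)}). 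The differentials are Yoneda multiplications, identified with cup products in $H^*(Q;R)$; the forced eventual vanishing of the abutment then yields exact sequences among shifted copies of $H^*(Q;R)$, and the numerical identity in~(ii), respectively the parity/divisibility constraints in~(iii)--(iv), drop out from $\dim_RH^m((\cy p)^2;R)=m+1$ and from the fact that $H^*(\cy 4;\cy 2)$ (resp.\ $H^*(Q_8;\cy 2)$) modulo nilpotents is generated in degree $2$ (resp.\ $4$). You would need to either reproduce this Ext-computation machinery or actually construct the algebraic localization and power-operation structures you are assuming; absent that, your ``summing the Lefschetz identity'' for~(ii) cannot produce a relation among dimensions (Euler characteristics only see parities), and your appeal to the classical arguments for~(iii)--(iv) is not a proof.
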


The rest of the section is devoted to the proof of 
Theorem C. 
As a first step of the proof we extend the given family $\cF$ to the family $\Sub$ of all subgroups of $G$ by taking $\CC (H)=0$ for every $H \not \in \cF$. Over the extended family, $\CC$ is still a finite projective chain complex over $R\Gamma_G$ and an $R$-homology $\un{n}$-sphere.

The Borel-Smith conditions are conditions on subquotients $K/L$ where $L\snor K\leq G$. To show that a Borel-Smith condition holds for a particular subquotient group $K/L$, we  consider the complex $\Def ^K _{K/L} \res ^G _K \CC $ (see Section \ref{sec:deflation}). This is a finite projective complex over 
$R\G_{K/L}$ because both restriction and deflation functors preserve projectives (the condition in Lemma \ref{lem:deflation} is satisfied because we extended our family $\cF$ to the family of all subgroups of $G$). 

Our first observation is the following:

\begin{lemma}\label{lem:monotone2}
Let $G$ be a finite group and let
$R=\bZ/p$.
 If $\CC$ is a finite projective chain complex over $\RG _G$, which is an $R$-homology
$\un{n}$-sphere, then whenever $L\snor K \leq G$ and $K/L$ is a $p$-group, we have $\un{n}(L)\geq \un{n}(K)$.
\end{lemma}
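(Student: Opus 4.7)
My plan is to use the inflation/deflation machinery of Section \ref{sec:deflation} to reduce to the case $K/L = \cy{p}$, and then derive a contradiction in the derived category of $RP$-modules using the perfectness of the cokernel complex.

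First, extend $\cF$ to the family of all subgroups of $G$ by setting $\CC(H) = 0$ for $H \notin \cF$, so that the condition on families in Lemma \ref{lem:deflation} is automatic. The deflated chain complex $\DD := \Def^K_{K/L} \res^G_K \CC$ is then a finite projective chain complex over $\RG_{K/L}$, since both restriction and deflation preserve projectives. Because $\DD(\overline{H}) = \CC(H)$ for $L \leq H \leq K$, the complex $\DD$ is an $R$-homology sphere with dimension function $\overline{\un n}(\overline{H}) = \un n(H)$. Setting $P = K/L$, the lemma reduces to showing $\overline{\un n}(\{1\}) \geq \overline{\un n}(P)$ for every finite projective $\RG_P$-chain complex which is an $R$-homology sphere, whenever $P$ is a $p$-group. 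By induction on $|P|$ using a normal subgroup $N \triangleleft P$ with $P/N \cong \cy{p}$, and applying the base case to the deflation along $N$ together with the inductive hypothesis to $\res^P_N \DD$ (which remains a finite projective $R$-homology sphere chain complex), it suffices to treat $P = \cy{p}$.

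For $P = \cy{p}$, every projective $\RG_P$-module is a summand of a direct sum of copies of $R[P/\{1\}^?]$ (which evaluates to $RP$ at $\{1\}$ and to $0$ at $P$) and $R[P/P^?]$ (trivial $R$'s everywhere). The restriction $r^P_{\{1\}}\colon \DD(P) \to \DD(\{1\})$ identifies $\DD(P)$ with the trivial-summand part of $\DD(\{1\})$, yielding a short exact sequence of augmented $RP$-chain complexes
\[
0 \to \widetilde{\DD(P)} \xrightarrow{f} \widetilde{\DD(\{1\})} \to F \to 0,
\]
in which $F$ is a bounded chain complex of free $RP$-modules.

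The technical heart of the argument, and the step I expect to be the main obstacle, is the final derived-category contradiction. Let $n = \overline{\un n}(P)$ and $m = \overline{\un n}(\{1\})$ and suppose for contradiction that $m < n$. Both $\widetilde{\DD(P)}$ and $\widetilde{\DD(\{1\})}$ have reduced homology $R$ concentrated in a single degree with trivial $P$-action (the only possibility, since $\Aut(\bZ/p)$ has order coprime to $p$), so by truncation $\widetilde{\DD(P)} \simeq R[n]$ and $\widetilde{\DD(\{1\})} \simeq R[m]$ in the derived category $D(RP)$. But $\Hom_{D(RP)}(R[n], R[m]) = \Ext^{m-n}_{RP}(R, R) = 0$ since $m - n < 0$, so $f$ is null in $D(RP)$. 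The corresponding distinguished triangle then splits, giving $F \simeq R[m] \oplus R[n+1]$ in $D(RP)$. Since $F$ is perfect over $RP$ (a bounded complex of finitely generated free modules) and perfect complexes are closed under direct summands, $R[n+1]$ would have to be perfect; this contradicts the fact that the trivial $RP$-module $R$ has infinite projective dimension, witnessed by $\Ext^k_{RP}(R, R) \cong H^k(P; R) \neq 0$ for all $k \geq 0$. Hence $m \geq n$.
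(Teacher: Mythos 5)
Your proof is correct, and the key step — the $P=\cy{p}$ case — is handled by a genuinely different argument than the paper's. After (both you and) the paper reduce to $P=\cy{p}$ via deflation, the paper introduces the atomic modules $I_1R$ and $I_PR$, observes that the reduced homology of $\widetilde\CC$ consists of these two modules in some order, and sets up the hypercohomology spectral sequence $E_2^{s,t}=\Ext^s_{\RG_P}(H_t,I_1R)$. The nontrivial differential would be Yoneda multiplication by a class in $\Ext^{*}_{\RG_P}(I_1R,I_PR)$, which is computed to vanish because $(I_PR)(1)=0$; so if $I_1R$ were the homology in the \emph{lower} degree, the infinite-dimensional bottom line $H^*(\cy{p};R)$ could never be killed. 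You instead exploit the structure theorem for projective $\RG_P$-modules to exhibit the cokernel of $r^P_1:\DD(P)\to\DD(1)$ as a bounded complex $F$ of \emph{free} $RP$-modules (and, since $RP$ is local, the cokernel of each $Q(P)\to Q(1)$ is indeed free). The derived-category argument is then clean: for $m<n$ the map $R[n]\to R[m]$ in $D(RP)$ vanishes because $\Ext^{m-n}_{RP}(R,R)=0$, the triangle splits, and $R[m]$ becomes a retract of the perfect complex $F$; but $R$ has infinite projective dimension over $RP$, so $R[m]$ is not perfect. Both proofs ultimately feed on the same numerical fact, $H^*(\cy{p};\bZ/p)$ is unbounded, but you package it via thickness of the perfect subcategory rather than via orbit-category $\Ext$-calculations and a spectral-sequence collapse argument. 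Your downward induction on $|P|$ is a slight detour — the paper just iterates the $\cy{p}$ case along a normal chain $L=L_0\lhd\cdots\lhd L_r=K$ with $\cy{p}$ subquotients — but it is valid, provided you note (as you implicitly do) that both restriction to a subgroup and deflation preserve finite projective $R$-homology-sphere complexes when the family is taken to be all subgroups.
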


\begin{proof} By the discussion above, it is enough to show that if $G=\cy p$ and $\CC$ is 
a finite projective $\RG _G$-complex which is an
$R$-homology $\un{n}$-sphere, then the inequality $\un{n}(1)\geq \un{n}(G)$ holds. Assume that $\un{n}(1)\neq \un{n}(G)$. Since $\un{R}$ is projective,  we can add $\CC _{-1} =\un{R}$ and consider the homology of the augmented complex $\widetilde \CC$. The complex $\widetilde \CC$ has nontrivial homology only at two dimensions, say $m$ and $k$ with $m> k$, so we get an extension of the form
$$ 0 \to H_m(\widetilde \CC) \to \widetilde\CC_{m}/ \Image \bd _{m+1}  \to \cdots \to
\widetilde \CC_{k+1} \to \ker \bd _{k} \to H_k (\widetilde \CC) \to 0.
$$
where the homology modules are $I_1 R$ and $I_G R$ in some order. 

For $H\in \cF$, the module $I_H M$ denotes the atomic module concentrated at $H$ with the value $(I_H M)(H)=M$ (see \cite[9.29]{lueck3}). We claim that  $H_m (\widetilde\CC)=I_1 R$ and $H_k (\widetilde\CC )=I_GR$, meaning that the module $I_GR$ appears before $I_1R$ in the homology. 
Once we show this,  it will imply that $\un{n}(1) > \un{n}(G)$ as desired.

 Let $\DD$ denote the chain complex obtained by erasing the homology groups $H_m(\widetilde \CC)$ and $H_k (\widetilde \CC)$ from the above exact sequence. Since $\ker \bd_k$ is projective and $\Image \bd _{m+1}$  has a finite projective resolution, the Ext-group $\Ext _{\RG_G} ^* (\DD, M)$ is zero after some fixed dimension, for every $R\G_G$-module $M$. 
 We will take $M = I_1R$ for simplicity. 
 
There is a two-line spectral sequence $E_2 ^{s,t}=\Ext_{R\G_G} ^s (H_t (\DD ), M )$ which converges to $\Ext ^* _{R\G_G} (\DD , M)$. 
Suppose, if possible, that $H_k (\widetilde \CC )=I_1R$. 
The module $I_1 R$ is concentrated at $1$, so its projective resolution is of the form $E_1 P_*$ for some projective resolution $P_*$ of $R$ as an $RG$-module. 
Then the 
bottom line of this spectral sequence $E_2 ^{*, 0} $ would be isomorphic to 
$$\Ext ^* _{R\G_G} (H_k (\widetilde \CC ) , M )=\Ext ^* _{R\G _G } (I_1 R, I_1R) = H^i (\Hom _{R\G_G} (E_1 P_*, I_1R))=H^*(G;R).$$
Since this cohomology ring is not finitely generated, there must be a non-trivial differential from the top line
$$\Ext ^* _{R\G_G} (H_m (\widetilde \CC ) , M )=\Ext ^* _{R\G _G } (I_G R, I_1R)$$
in order for the spectral sequence to converge to a finite dimensional limit.

 The differential of this spectral sequence is given by multiplication with an extension class in $\Ext_{R\G_G } ^{m-k+1} (I_1 R, I_G R)$.
  But, by a similar calculation as above, we see that 
 $$\Ext ^i _{\RG_G} (I_1R, I_GR)=H^i (G , (I_G R) (1)))=0$$ for all $i\geq 0$, because $(I_G R)(1)=0$. This contradiction shows that $H_k (\widetilde \CC )=I_GR$ and $H_m (\widetilde \CC )=I_1R$, as required.
 \end{proof}

The above lemma shows that under the conditions of Theorem C, 
 the dimension function $\un{n}$ is monotone in the sense defined in \cite[p.~211]{tomDieck2}. Now we verify (in separate steps) that the dimension function satisfies the conditions of Definition \ref{def:Borel-Smith}.
These conditions come from the period of
the cohomology of the corresponding subquotient groups. 

\begin{lemma}[Borel-Smith, part (i)]\label{lem:Case (i)} Let $G= \cy p$, for $p$ an odd prime, let $R=\cy p$, and $\CC$ be a finite projective $\RG _G$-complex which is an $R$-homology $\un{n}$-sphere.
Then $\un{n}(1)-\un{n}(G)$ is even. 
\end{lemma}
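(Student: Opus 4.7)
The plan is to prove this case of the Borel--Smith condition by comparing the Euler characteristics of $\CC(1)$ and $\CC(G)$ modulo $p$, and then combining the resulting congruence with the sphere hypothesis to extract the desired parity. Since $\CC$ is a finite projective complex over $\RG_G$, the chain modules $\CC_i(1)$ and $\CC_i(G)$ are finitely generated $R$-modules and the Euler characteristics $\chi(\CC(1))$ and $\chi(\CC(G))$ are well-defined integers.

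For the first step, I would apply the decomposition theorem \cite[Chap.~I, Theorem 11.18]{tomDieck2} (recalled inside the proof of Lemma \ref{lem:monotone}) to each $\CC_i$. Because $\cy p$ has only the two subgroups $1$ and $G$, this yields $\CC_i \cong E_1 P_{1,i} \oplus E_G P_{G,i}$, with $P_{1,i}$ a projective $RW_G(1) = RG$-module and $P_{G,i}$ a projective $R$-module. Since $R = \bbF_p$ and $RG$ is a local ring, $P_{1,i}$ is free of some rank $a_i$, so $\dim_R P_{1,i} = p\,a_i$. A direct computation from the defining formula $E_H P_H(K) = P_H \otimes_{RW_G(H)} R\Map_G(G/K, G/H)$ gives
\begin{equation*}
\CC_i(1) \cong P_{1,i} \oplus P_{G,i}, \qquad \CC_i(G) \cong P_{G,i},
\end{equation*}
and consequently $\dim_R \CC_i(1) - \dim_R \CC_i(G) = p\,a_i$ for each $i$. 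Taking alternating sums over $i$ yields the congruence $\chi(\CC(1)) \equiv \chi(\CC(G)) \pmod{p}$.

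In the second step the sphere hypothesis enters. The augmented complex $\widetilde{\CC}(K)$ has reduced $R$-homology concentrated in degree $\un{n}(K)$ and equal to $R$ there, using the convention that $\CC(K) = 0$ corresponds to $\un{n}(K) = -1$. The Euler--Poincar\'e relation therefore yields the uniform identity $\chi(\CC(K)) = 1 + (-1)^{\un{n}(K)}$ for $K \in \{1, G\}$. Combining this with the previous congruence produces
\begin{equation*}
(-1)^{\un{n}(1)} \equiv (-1)^{\un{n}(G)} \pmod{p}.
\end{equation*}
Since $p \geq 3$ is odd and both sides lie in $\{\pm 1\}$, this congruence forces an equality of integers, hence $\un{n}(1) \equiv \un{n}(G) \pmod 2$, which is the stated assertion.

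The main technical ingredient is the decomposition of projective orbit-category modules into the $RG$-free isotypic piece $E_1 P_{1,i}$ and the constantly-valued piece $E_G P_{G,i}$ carrying trivial $G$-action; this is the algebraic avatar of the classical fact that a free $\cy p$-action contributes a multiple of $p$ to the Euler characteristic. Once that decomposition is in hand, the rest is bookkeeping that works uniformly whether or not the original family $\cF$ contains both $1$ and $G$, since the excluded case simply forces the corresponding summand to vanish while preserving the mod-$p$ congruence.
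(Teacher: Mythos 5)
Your proof is correct, and it takes a genuinely different route from the paper. You compute $\chi(\CC(1)) - \chi(\CC(G)) \equiv 0 \pmod p$ via the decomposition of each $\CC_i$ into the isotypic pieces $E_1 P_{1,i} \oplus E_G P_{G,i}$, using that $R\cy{p}$ is local so $P_{1,i}$ is $RG$-free and contributes a multiple of $p$; then you feed in the sphere hypothesis via $\chi(\CC(K)) = 1 + (-1)^{\un{n}(K)}$ and use $p \geq 3$ to upgrade $(-1)^{\un{n}(1)} \equiv (-1)^{\un{n}(G)} \pmod p$ to equality. This is the algebraic avatar of the classical Smith congruence $\chi(X) \equiv \chi(X^G) \pmod p$, and all the technical points check out: the fixed-point values $E_1 P_1(1) = P_1$, $E_1 P_1(G) = 0$, $E_G P_G(1) = E_G P_G(G) = P_G$ follow directly from the defining tensor formula, and finite generation of the chain modules over $R = \bbF_p$ makes the Euler characteristics and the Euler--Poincar\'e identity valid.

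The paper instead forms the quotient complex $\DD = \widetilde\CC/\widetilde\CC^{(G)}$, evaluates at the trivial subgroup to get a complex of free $RG$-modules with homology $R$ in two degrees, and reads off a truncated projective resolution of $R$ over $RG$ of length $\un{n}(1) - \un{n}(G)$; since $\cy p$ ($p$ odd) has $\bbF_p$-cohomology of period $2$, this length must be even. The Euler characteristic argument is arguably more elementary, bypassing the appeal to periodicity of group cohomology; the trade-off is that it does not propagate to the remaining Borel--Smith conditions (ii)--(iv), where mod-$p$ Euler characteristic comparisons carry no information and the paper's spectral-sequence machinery is essential. As a self-contained proof of condition (i), however, yours is fully rigorous and a pleasant alternative.
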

\begin{proof} 
 Consider
 the subcomplex $\widetilde \CC ^{(G)} $
of $\widetilde \CC$ consisting of all projectives of type $\uR{G/G}$, and
 let $\DD=\widetilde \CC/ \widetilde \CC^{(G)}$ denote the quotient complex. The complex $\DD$ has nontrivial
homology only in dimensions  $m$ and $k+1$, where $m=\un{n}(1)$ and $k=\un{n}(G)$.  
Moreover, all the $R\G_G$-modules in the complex $\DD$ are of the form $\uR{G/1}$. Evaluating at the subgroup $1$, we obtain a chain complex of free $RG$-modules 
$$ 0 \to Q_d \to \dots \to Q_{m+1} \maprt{\bd _{m+1}} Q_{m} \to \dots \to Q_{k+1}\maprt{\bd_{k+1}} Q_{k} \to \dots \to Q_0 \to 0.$$
whose homology is $R$ at dimensions $m$ and $k+1$.
 This gives an exact sequence of the form
$$ 0 \to R \to Q_{m}/ \Image \bd _{m+1}  \to \cdots \to
Q_{k+2} \to \ker \bd _{k+1} \to R \to 0.
$$ Using the fact that free $RG$-modules are both projective and injective, we conclude that all the modules in the above sequence, except the two $R$'s on the both ends, are projective as $RG$-modules, so we have a periodic resolution. Since the group $G=\cy p$ has periodic 
$R$-cohomology with period $2$, we have $m-k = \un{n}(1) - \un{n}(G) \equiv 0 \pmod 2$. 
\end{proof}
\begin{remark}
The $R$-cohomology of the group $G=\cy 2$ is periodic of period 1. 
\end{remark}

For condition (ii), the argument is more involved. As before, after the subquotient reduction we may assume that $G=K/L = \cy p \times \cy p$, and that $\cF$ is the family of all  subgroups of $G$.
Since the complex $\CC$ is a finite complex of projective modules,
for any $\RG_G$-module $M$, we have $$ H^n (\Hom _{\RG_G} (\CC , M )
)=0$$ for $n > d$, where $d$ is the dimension of the chain complex
$\CC$. Consider the
hyper-cohomology spectral sequence for the complex $\CC$. This is a
spectral-sequence with $E_2$-term given by
\eqncount
\begin{equation}\label{eqn:hyperSS}
E_2 ^{s,t} =\Ext ^s _{\RG _G} (H_t (\CC ), M) 
\end{equation}
which converges to $H^{s+t} (\Hom _{\RG _G} (\CC , M ))$. Since
$\un{R}$ is a projective $\RG_G$-module, we can replace $H_t (\CC)$ with the
reduced homology $\widetilde H_t (\CC)$. So, we have nonzero terms
for $E_2 ^{s,t}$ only when $t$ is equal to $n_1=\un{n}(1)$, $n_G=\un{n}(G)$, or $n_{H_i}=\un{n}(H_i)$ 
where $H_i$ are the subgroups of $G$ of order $p$. 
Since $\un{n}$ is monotone, we have $n_1 \geq n_{H_i} \geq n_G$ for all $i \in \{0,\dots, p\}$. The required formula is
$$n_1 - n_G = \sum_{i = 0}^{p} (n_{H_i} - n_G).$$

\begin{remark}
In the proof below we assume $n_1>n_{H_i}>n_G$ for all $i$, to make the argument easy to follow. If for some $i$, we have $n_{H_i}=n_1$ or $n_{H_i}=n_G$, then the argument below can be adjusted easily to include these cases as well. 
\end{remark} 

By adding free summands to the complex $\CC$, we can assume that
all the cohomology between dimensions $n_1$ and $n_G$ is concentrated at the dimension $n_M=\max _i \{ n_{H_i}\}$. Then the homology at this dimension will be an $\RG_G$-module 
which is filtered by Heller shifts of homology groups $H_t(\CC)$ at dimensions
$t=n_{H_i}$ for $i=0,\dots, p$. The homology of the complex $\CC$ at
dimension $n_{H_i}$ is $I_{H_i}R$, where $I_{H_i}R$ denotes the $R\G_G$ module with value $R$ at $H_i$ and zero at all the other subgroups. We have the following lemma. 

\begin{lemma}\label{lem:Ext-calc}
If $i, j \in \{0, \dots, p\}$ are such that $i\neq j$, then $$\Ext ^m _{\RG _G} ( I_{H_i}R , I_{H_j} R)=0 $$ for every $m \geq 0$. 
\end{lemma}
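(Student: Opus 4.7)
The strategy is to construct a projective resolution $Q_\bullet \to I_{H_i}R$ whose summands are all of the form $E_K(-)$ with $K \leq H_i$, and then argue that any such projective admits no nonzero map into $I_{H_j}R$. Since $H_i \neq H_j$ are both subgroups of order $p$ in $G = \cy{p}\times\cy{p}$ they are incomparable, so the only subgroups $K$ with $K \leq H_i$ are $1$ and $H_i$; in particular $I_{H_j}R(K) = 0$ for every such $K$, because the atomic module $I_{H_j}R$ vanishes outside $\{H_j\}$.

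To build the resolution, I start from the explicit formula $(E_{H_i}R)(K) = R$ if $K \leq H_i$ and $0$ otherwise. This shows that the tautological surjection $E_{H_i}R \twoheadrightarrow I_{H_i}R$ (the identity at $H_i$, zero elsewhere) has kernel $I_1 R$, which coincides with $E_1 R$ since $1$ is the only proper subgroup of $H_i$. Taking a projective resolution $P_\bullet \to R$ of $R$ as an $RG$-module and applying the exact functor $E_1$ produces a projective resolution $E_1 P_\bullet \to I_1 R$, and the horseshoe lemma splices these together into a projective resolution of $I_{H_i}R$ with $Q_0 = E_{H_i}R$ and $Q_k = E_1 P_{k-1}$ for $k \geq 1$. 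Every $Q_k$ is therefore of the form $E_K(-)$ with $K \in \{1, H_i\}$.

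I then apply $\Hom_{\RG_G}(-, I_{H_j}R)$ to $Q_\bullet$ and invoke the standard adjunction
$\Hom_{\RG_G}(E_K M, N) \cong \Hom_{RW_G(K)}(M, N(K))$
between the induction functor $E_K$ and evaluation at $G/K$. Each term of the resulting Hom-complex reduces to $\Hom_{RW_G(K)}(-, I_{H_j}R(K))$ with $K \in \{1, H_i\}$, and hence vanishes. Therefore $\Ext^m_{\RG_G}(I_{H_i}R, I_{H_j}R) = 0$ for every $m \geq 0$.

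The only delicate step is the identification of the kernel of $E_{H_i}R \to I_{H_i}R$ as $I_1 R$, but this is immediate from the explicit values of $E_{H_i}R$ enumerated above, so I expect no essential obstacle in carrying out the plan.
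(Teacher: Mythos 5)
Your overall strategy matches the paper's: show that a projective resolution of $I_{H_i}R$ consists of modules of the form $E_K P$ with $K\in\{1,H_i\}$, then use the adjunction $\Hom_{\RG_G}(E_K P, N)\cong\Hom_{RW_G(K)}(P, N(K))$ together with $I_{H_j}R(1)=I_{H_j}R(H_i)=0$ to make the Hom-complex vanish term by term. That part is exactly right, and the adjunction step is the same one the paper invokes.

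However, your explicit construction of the resolution has a genuine error: the module $E_{H_i}R$ (with $R$ the trivial $RW_G(H_i)$-module) is \emph{not} projective over $\RG_G$ in this setting. By the decomposition theorem cited in the paper (Lemma \ref{lem:monotone}), $E_{H}P$ is projective iff $P$ is a projective $RW_G(H)$-module. Here $W_G(H_i)\cong G/H_i\cong\cy p$ and $R=\cy p$, so the trivial module $R$ is not projective over $RW_G(H_i)\cong \bbF_p[\cy p]$. Thus $E_{H_i}R$ is not projective, and the sequence you splice, $0\to I_1 R\to E_{H_i}R\to I_{H_i}R\to 0$, though exact, cannot be the start of a projective resolution. (For the same reason $E_1R$ is not projective, which you do handle by resolving $R$ over $RG$ first; the problem is only with the $E_{H_i}R$ term at the bottom.) The actual minimal projective cover of $I_{H_i}R$ is the free module $R[G/H_i^?]=E_{H_i}(R[W_G(H_i)])$, whose value at $H_i$ is $R[\cy p]$ and at $1$ is $R[G/H_i]$; the kernel of $R[G/H_i^?]\to I_{H_i}R$ has nonzero value $I(\cy p)$ at $H_i$ (the augmentation ideal), so it is not atomic at $1$ and cannot be resolved by $E_1(-)$ alone. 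The correct resolution interleaves $E_1(-)$ and $E_{H_i}(-)$ projectives in a more intricate pattern. Fortunately, once you know the resolution is supported on $\{1,H_i\}$ (which follows because any module $M$ with $M(L)=0$ for $L\not\leq H_i$ admits a projective cover built only from $R[G/K^?]$ with $K\leq H_i$, and the kernel again has this support), the adjunction argument you give finishes the proof without needing an explicit resolution -- this is what the paper does.
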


\begin{proof} The projective resolution of $I_{H_i}$ is formed by projective modules of type $E_H P$ with $H=1$ or $H_i$. Since $$\Hom _{\RG _G} (E_H P, I_{H_j} R)  \cong \Hom _{RW_G (H) } (P, I_{H_j} (H) )=0$$ when $i\neq j$, we obtain the desired result.
\end{proof} 

As a consequence of Lemma \ref{lem:Ext-calc}, we conclude that all the
extensions in this filtration of $H_{n_M} (\CC)$  are split extensions. So, the homology module $H_{n_M} (\CC)$ is isomorphic to a direct sum of Heller shifts of modules $I_{H_i}R$. In particular, we obtain that, for any $\RG_G$-module $M$,
$$ \Ext _{\RG_G } ^{s} ( H_{n_M} (\CC ) , M) \cong \oplus_i \Ext _{\RG_G}
^{s+n_M-n_{H_i}} (I_{H_i} R , M) $$
for every $s \geq 0$.

The spectral sequence given in (\ref{eqn:hyperSS}) converges to zero for total dimension $>d$. It has only three non-zero horizontal lines, so it gives a long exact sequence of the form
$$\cdots \to \Ext _{\RG _G} ^{k +n_1-n_G+1} (I_G R, M) \maprt{\delta} \Ext^k
_{\RG_G} (I_1 R, M) \maprt{\gamma} \oplus_{i=0}^{p} \Ext _{\RG _G}
^{k+n_1-n_{H_i}+1} (I_{H_i} R , M) $$ $$\to \Ext _{\RG _G} ^{k
+n_1-n_G+2} (I_G R, M) \maprt{\delta} \Ext _{\RG _G} ^{k+1} (I_1R, M)
\to \cdots$$ where $k$ is an integer such that $k>d-n_1$ and $M$ is any $R\G_G$-module. If we take $M=I_1 R$, then  $\Ext^k _{\RG_G} (I_1 R, M)\cong H^k (G,
R)$. When $M=I_1 R$, the other Ext-groups in the above exact sequence also reduce to the cohomology of
the group $G$ with some dimension shifts. 

\begin{lemma}\label{lem:Ext-calc for S_i}
For every $i\in \{ 0, \dots, p\}$, we have
$$\Ext _{\RG _G}^{m} (I_{H_i} R , I_1R ) \cong \Ext _{\RG _G}^{m-1} (I_1 R , I_1R
)\cong H^{m-1} (G ; R)$$ for every $m\geq 1$. We also have
$$\Ext _{\RG _G}^{m} (I_{G} R , I_1R ) \cong  \oplus _p \Ext _{\RG _G}^{m-2} (I_1R , I_1R )\cong \oplus_p H^{m-2} (G;
R)$$ for every $m \geq 2$. Here $\oplus_p$ denotes the direct sum of $p$-copies of the same $R$-module.   
\end{lemma}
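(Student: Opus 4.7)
My plan is to use the identifications $I_1 R \cong E_1 R$ and $\un{R} \cong E_G R$ (immediate from the definition of $E_H$), together with two further short exact sequences involving $E_{H_i} R$ and a submodule of $\un{R}$, to reduce both Ext-calculations to ordinary group cohomology of $G$.

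First I would verify by direct computation that $E_{H_i} R$ has value $R$ at both $1$ and $H_i$ with identity restriction map (and value $0$ at all other subgroups), so that the natural projection yields a short exact sequence
\begin{equation*}
0 \to I_1 R \to E_{H_i} R \to I_{H_i} R \to 0.
\end{equation*}
Since $E_H$ is exact and preserves projectives (as a left adjoint to the exact evaluation functor $M \mapsto M(H)$), one has the derived adjunction $\Ext^*_{\RG_G}(E_H P, M) \cong \Ext^*_{RW_G(H)}(P, M(H))$. Applied with $M = I_1 R$, the vanishing of $(I_1 R)(H)$ for $H = H_i$ or $H = G$ yields $\Ext^*(E_{H_i} R, I_1 R) = 0$ and $\Ext^*(\un{R}, I_1 R) = 0$, while for $H = 1$ we obtain $\Ext^*(I_1 R, I_1 R) \cong \Ext^*_{RG}(R, R) = H^*(G; R)$. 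The first isomorphism of the lemma then follows from the long exact sequence of $\Ext^*(-, I_1 R)$ applied to the displayed sequence.

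For the second statement, let $\un{R}|_{<G}$ denote the submodule of $\un{R}$ with value $R$ at each proper subgroup and value $0$ at $G$. This fits into two short exact sequences
\begin{equation*}
0 \to \un{R}|_{<G} \to \un{R} \to I_G R \to 0, \qquad 0 \to I_1 R \to \un{R}|_{<G} \to \bigoplus_{i=0}^p I_{H_i} R \to 0,
\end{equation*}
the second with quotient having trivial restriction maps. Combining the resulting long exact sequences with the vanishing of $\Ext^*(\un{R}, I_1 R)$ and the first part of the lemma reduces the computation to understanding the connecting map
\begin{equation*}
\delta_j \colon H^j(G; R) \longrightarrow \bigoplus_{i=0}^p H^j(G; R)
\end{equation*}
coming from the second short exact sequence.

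The main obstacle is to show that $\delta_j$ is an injection with cokernel $\oplus_p H^j(G; R)$. The map $\delta_j$ is cup product with the extension class, whose $i$-th component corresponds (under the isomorphism $\Ext^1(I_{H_i} R, I_1 R) \cong R$ from the first part of the lemma) to the class of the sub-extension $0 \to I_1 R \to \un{R}|_{\{1, H_i\}} \to I_{H_i} R \to 0$. Since the restriction map in $\un{R}|_{\{1, H_i\}}$ is the identity while in the split extension $I_1 R \oplus I_{H_i} R$ it is zero, this sub-extension is non-split, so its class is a unit in $R = \cy p$. Consequently each component of $\delta_j$ is multiplication by a unit on $H^j(G; R)$, making $\delta_j$ a split injection with cokernel $\oplus_p H^j(G; R)$. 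Feeding this back into the long exact sequences yields $\Ext^m(I_G R, I_1 R) \cong \oplus_p H^{m-2}(G; R)$ for $m \geq 2$, as required.
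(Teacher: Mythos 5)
Your proof is correct; the first isomorphism is essentially the paper's argument, and the second takes a genuinely different (but valid) route. For the first part, the paper introduces the module $J_{H_i}R$ ad hoc (value $R$ at $1$ and $H_i$, identity restriction, zero elsewhere) and notes that its projective resolution consists of summands of type $E_{H_i}P$, so $\Ext^*_{\RG_G}(J_{H_i}R, I_1R) = 0$; you observe that $J_{H_i}R$ is exactly $E_{H_i}R$ and read off the same vanishing from the derived adjunction $\Ext^*_{\RG_G}(E_HP,M)\cong\Ext^*_{RW_G(H)}(P,M(H))$. That adjunction does hold: $\Map_G(G/?,G/H)$ is a free right $W_G(H)$-set, so $E_H$ is exact and preserves projectives, though it is worth flagging this since $R$ is \emph{not} projective over $RW_G(H_i)=R[\cy p]$ and so the adjunction is not an immediate consequence of $E_{H_i}R$ being projective. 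For the second part you diverge: the paper uses $0\to \oplus_p I_1R\to\oplus_{i=0}^p J_{H_i}R\to N\to 0$, which gives $\Ext^{m-1}_{\RG_G}(N,I_1R)\cong\Ext^{m-2}_{\RG_G}(\oplus_pI_1R,I_1R)$ directly from the vanishing of $\Ext^*(J_{H_i}R,I_1R)$, with no connecting-map analysis. You instead use $0\to I_1R\to N\to\oplus_{i=0}^p I_{H_i}R\to 0$ and must then show the boundary map $\delta_j$ is a split injection with cokernel $\oplus_pH^j(G;R)$. Your conclusion is right, but the justification can be sharpened: the $i$-th component of $\delta_j$ is Yoneda (not cup) product with the class $\epsilon_i\in\Ext^1(I_{H_i}R,I_1R)$ of the extension $E_{H_i}R$, and the identification $\Ext^{j+1}(I_{H_i}R,I_1R)\cong H^j(G;R)$ coming from the first part is \emph{itself} Yoneda product with $\epsilon_i$, so under that identification the component is the identity, not merely ``a unit''. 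Both approaches work; the paper's choice of short exact sequence is slightly more economical because it eliminates the need to analyze the connecting map at all.
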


\begin{proof} Since we already observed that $\Ext^k _{\RG_G} (I_1 R, I_1R )\cong H^k (G,
R)$ for every $k \geq 0$, it is enough to show the first isomorphisms. Let $i\in \{0, \dots, p\}$ and $J_{H_i}R $
denote the $\RG _G$ module with value $R$ at subgroups $1$ and $H_i$ and zero at every other subgroup. We assume that the restriction map is an isomorphism. So we have a non-split exact sequence of $\RG _G$-modules of the form $$ 0 \to I_1 R \to J_{H_i} R \to I_{H_i } R \to 0.$$ 
Since the projective resolution of $J_{H_i} R$ will only include projective modules of the form $E_{H_i } P$, we have $\Ext ^m _{\RG _G } (J_{H_i} R , I_1 R ) =0$ for all $m\geq 0$. The long exact Ext-group sequence associated to the above short exact sequence will give the desired isomorphism for the module $I_{H_i} R$.

For the second statement in the lemma, we again only need to show that the isomorphism $$\Ext _{\RG _G}^{m} (I_{G} R , I_1R ) \cong  \oplus _p \Ext _{\RG _G}^{m-2} (I_1R , I_1R )$$ holds for all $m\geq 2$.
Let $N$ denote the $\RG _G$-module defined as the kernel of the map
$\un{R} \to I_G R$ which induces the identity homomorphism at $G$. Since the constant module $\un{R}$ is projective as a $\RG _G$-module, we have  
$$\Ext _{\RG}^{m} (I_{G} R , I_1R ) \cong \Ext _{\RG}^{m-1} (N , I_1R
)$$
for $m\geq 2$. In addition, there is an exact sequence of the form $$ 0 \to \oplus _{p} I_1 R \to 
\oplus _{i=0} ^{p} J_{H_i} R \to N \to 0.$$ Since 
$\Ext ^m _{\RG _G } (J_{H_i} R , I_1 R ) =0$ for all $m\geq 0$, we obtain
$$\Ext _{\RG}^{m} (I_{G} R , I_1R ) \cong \Ext _{\RG}^{m-1} (N , I_1R
)\cong \oplus _p \Ext _{\RG}^{m-2} (I_1R , I_1R )\cong \oplus_p H^{m-2} (G;
R)$$ for every $m\geq 2$. This completes the proof of the lemma.  
\end{proof}

\begin{lemma}[Borel-Smith, part (ii)]\label{lem:Case (ii)} Let $G = \cy p \times \cy p$, let $R = \cy p$, and let   $\CC$ be a finite projective $\RG _G$-complex which is an $R$-homology $\un{n}$-sphere.
Then 
$$ \un{n}(1) - \un{n}(G) = \sum_{i=0}^{p} (\un{n}(H_i) - \un{n}(G))$$
where $H_0, H_1, \dots, H_p$ denote the distinct subgroups of $G$ of order $p$.
\end{lemma}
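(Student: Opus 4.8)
The plan is to read off the asserted identity from the three-line long exact sequence displayed just above, specialized to the coefficient module $M=I_1R$, after the reductions set up in the run-up to~(\ref{eqn:hyperSS}). Extend $\cF$ to the family of all subgroups of $G=\cy p\times\cy p$ by putting $\CC(H)=0$ for $H\notin\cF$; then $\CC$ is still a finite projective $\RG_G$-complex and an $R$-homology $\un n$-sphere, and by Lemma~\ref{lem:monotone2} we have $n_1:=\un n(1)\geq n_{H_i}:=\un n(H_i)\geq n_G:=\un n(G)$ for each $i$. After adding free summands we may assume the reduced homology of $\CC$ is concentrated in the three degrees $n_G<n_M<n_1$ with $n_M=\max_i n_{H_i}$, the middle homology module being a direct sum of Heller shifts of the atomic modules $I_{H_i}R$ (using Lemma~\ref{lem:Ext-calc} to split the filtration, as in the discussion preceding the statement). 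Put $\beta:=n_1-n_G\geq0$ and $\alpha_i:=n_1-n_{H_i}\geq0$; since there are $p+1$ subgroups $H_i$, a one-line rearrangement shows that the assertion of the lemma is equivalent to $\sum_{i=0}^{p}\alpha_i=p\beta$.

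Substituting $M=I_1R$ and applying Lemma~\ref{lem:Ext-calc for S_i} together with $\Ext^k_{\RG_G}(I_1R,I_1R)\cong H^k(G;R)$, every term of the long exact sequence becomes a finite direct sum of degree shifts of $H^*(G;R)$, and the sequence reads, for all $k>\dim\CC-n_1$,
\begin{equation*}
\cdots\to\oplus_p H^{k+\beta-1}(G;R)\to H^k(G;R)\maprt{\gamma}\oplus_{i=0}^{p}H^{k+\alpha_i}(G;R)\to\oplus_p H^{k+\beta}(G;R)\to H^{k+1}(G;R)\to\cdots .
\end{equation*}
I shall use two facts about $S:=H^*((\cy p)^2;\cy p)$: that its Poincar\'e series is $(1-t)^{-2}$, equivalently $\dim_R H^j(G;R)=j+1$ for all $j\geq0$ (true for $p=2$ via $\cy2[x,y]$ and for $p$ odd via $\Lambda(a_1,a_2)\otimes\cy p[b_1,b_2]$); and that $S$ is Cohen--Macaulay of Krull dimension $2$, being free over the polynomial subring generated by its degree-$2$ classes, so that $S$ has no nonzero graded submodule of Krull dimension $<2$. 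Note also that $\gamma$ is $S$-linear, hence multiplication by a tuple of classes $\zeta_i\in H^{\alpha_i}(G;R)$, and that its kernel $\ker\gamma=\bigoplus_k\ker\gamma_k$ is a graded $S$-submodule of $S$.

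Now the bookkeeping. Write $A_j$ for the $j$-th term of the sequence and $Z_j=\ker(A_j\to A_{j+1})$. Summing $(-1)^j\dim A_j$ over a block of complete $3$-periods and applying the telescoping identity $\sum_{j=j_0}^{j_1}(-1)^j\dim A_j=(-1)^{j_0}\dim Z_{j_0}+(-1)^{j_1}\dim Z_{j_1+1}$, the linear-in-$k$ dimension formulas above force $\dim Z_{3k}=pk+\gamma_{k\bmod2}$ for $k$ large, with $\gamma_0+\gamma_1=\sum_i\alpha_i+p\beta$; and then $\dim Z_{3k+1}=\dim\bigl(\oplus_p H^{k+\beta-1}(G;R)\bigr)-\dim Z_{3k}=p\beta-\gamma_{k\bmod2}$. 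In particular $\dim(\ker\gamma)_k=\dim Z_{3k+1}$ is bounded in $k$, so $\ker\gamma$ has Krull dimension $\leq1$, hence $\ker\gamma=0$ by Cohen--Macaulayness of $S$. Therefore $\dim Z_{3k+1}=0$, i.e. $\gamma_0=\gamma_1=p\beta$, and $\gamma_0+\gamma_1=\sum_i\alpha_i+p\beta$ now gives $\sum_{i=0}^p\alpha_i=p\beta$, which is the claim.

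The step I expect to be the real work is organizing the bi-infinite exact sequence of the second display — whose terms have polynomially growing dimensions, so that no naive Euler characteristic is available — into the single finite relation $\gamma_0+\gamma_1=\sum_i\alpha_i+p\beta$: one must track the periods carefully and be sure the dimension formulas are used only in their stable range $k>\dim\CC-n_1$. Once that relation and the boundedness of $\dim\ker\gamma$ are in hand, the Cohen--Macaulay property of $H^*((\cy p)^2;\cy p)$ is the one conceptual input that turns ``$\ker\gamma$ is small'' into ``$\ker\gamma=0$'' and thereby closes the argument.
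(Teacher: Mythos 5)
Your proof is correct, and it takes a genuinely different route from the paper at the crucial step, namely proving that the connecting map $\gamma$ (multiplication by the classes $u_i$, in the paper's notation) is injective. The paper shows injectivity by exploiting the ring structure of $H^*(G;\cy p)$ directly: for $p=2$ the ring is a polynomial algebra and so has no zero divisors, while for $p$ odd the paper invokes part (i) of the Borel--Smith conditions to force $\deg u_i$ even, deduces that a non-injective $\gamma$ would have each $u_i$ a multiple of $a_1a_2$, and then restricts the whole spectral sequence to a cyclic subgroup $H_i$ with $\Res^G_{H_i}(a_1a_2)=0$ to get a contradiction with finite-dimensionality. Your argument instead reads the injectivity off from a dimension count: the telescoping identity for the three-periodic exact sequence forces $\dim(\ker\gamma)_k$ to be bounded in $k$, hence $\ker\gamma$ has Krull dimension at most one as a graded ideal of $S=H^*(G;R)$, and the Cohen--Macaulay property of $S$ (all associated primes are minimal, of dimension two) then forces $\ker\gamma=0$. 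This is cleaner in two respects: it treats $p=2$ and $p$ odd uniformly, and it makes Lemma~\ref{lem:Case (ii)} logically independent of Lemma~\ref{lem:Case (i)}; the price is that the Cohen--Macaulay input replaces the paper's more elementary (if case-ridden) ring-theoretic analysis and restriction-to-subgroup step. The bookkeeping itself is sound: with $\dim_R H^j(G;R)=j+1$ the single-period telescoping identity really does give the recurrence $\dim Z_{3k}+\dim Z_{3(k+1)}=2pk+p\beta+p+\sum_i\alpha_i$, whose solutions are $\dim Z_{3k}=pk+\gamma_{k\bmod 2}$ with $\gamma_0+\gamma_1=p\beta+\sum_i\alpha_i$, and $\dim Z_{3k+1}=p\beta-\gamma_{k\bmod2}$ is bounded as claimed. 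Once $\ker\gamma=0$ you get $\gamma_0=\gamma_1=p\beta$ and hence $\sum_i\alpha_i=p\beta$, which is a restatement of the lemma, just as in the paper's final short-exact-sequence count. Everything else --- the extension of $\cF$, the concentration of the middle homology at $n_M$ via Heller shifts, the splitting via Lemma~\ref{lem:Ext-calc}, and the Ext computations of Lemma~\ref{lem:Ext-calc for S_i} --- matches the paper's own setup.
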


\begin{proof}
Using the $\Ext$-group calculations given in Lemma \ref{lem:Ext-calc for S_i}, we obtain a long exact sequence of the form
$$\cdots \to \oplus _p H ^{k +n_1-n_G-1} (G ; R ) \maprt{\delta} H^k (G ; R)
\maprt{\gamma} \oplus_{i=0}^{p} H ^{k+n_1-n_{H_i}} (G ; R)
$$ $$ \to \oplus _p H ^{k +n_1-n_G} (G; R) \maprt{\delta}
H ^{k+1} (G; R) \to \cdots$$ where $k > d-n_1$. We claim that the map
$\gamma$ is injective. Observe that if $\gamma = \oplus \gamma _i$,
then for each $i$, the map $\gamma _i$ can be defined as
multiplication with some cohomology class $u_i$. To see this observe that $\gamma$ is the map induced by the differential $$d_{n_1-n_M+1} \colon \Ext _{\RG _G } ^k ( H_{n_1} (\CC ) , I_1 R ) \to \Ext _{\RG _G } ^{k +n_1-n_{M} +1} (H_{n_M} (\CC) , I_1 R)$$ on the hypercohomology spectral sequence given at (\ref{eqn:hyperSS}).  This spectral sequence has an $\Ext _{\RG} ^* (I_1 R, I_1 R)$-module structure, where the multiplication is given by the Yoneda product, defined by splicing the corresponding extensions (see \cite[Section 4]{benson-carlson2}).

Under the isomorphisms given in Lemma \ref{lem:Ext-calc for S_i}, the differential $d_{n_1-n_M +1}$ becomes a map $H^k (G, R) \to \oplus _i H^{k + n_1-n_{H_i} } (G, R)$ and the Yoneda product of Ext-groups is the same as the usual cup product multiplication in group cohomology under the canonical isomorphism $\Ext _{\RG_G} ^{m} (I_1 R, I_1 R)\cong H^m (G, R )$ (for comparison of different products on group cohomology see \cite[Proposition 4.3.5]{carlson1}). So we can conclude that $\gamma _i$ is the map defined by multiplication (the usual cup product) 
with a cohomology class $u_i \in H^{n_1-n_{H_i}} (G, R)$.  

 For $p=2$,   the cohomology ring $H^* (G,R)$ is isomorphic to a polynomial algebra $R[t_1, t_2]$ with $\deg t_i=1$ for $i=1,2$. Since there are no nonzero divisors in a polynomial algebra,  the map $\gamma$ is either injective or the zero map.  

For $p$ an odd prime, the cohomology ring $H^* (G,
R)$ is isomorphic to the tensor product of an exterior algebra with a polynomial algebra $$\Lambda _R (a_1, a_2)\otimes R[x_1, x_2]$$
where $\deg a_i=1$ and $\deg x_i=2$, and 
the nonzero divisors of this ring are multiples of $a_i$ or $a_j$. By Lemma \ref{lem:Case (i)},   $\deg u_i = n_1 -n_{H_i}\equiv 0 \pmod 2$, so the map $\gamma$ is either injective, or each $u_i$ must be a multiple of $a_1a_2$.

  The assumption that $\gamma$ is not injective implies that
 the entire spectral sequence restricted to some $H_i\cong \cy p$,
with $\Res_{H_i}^G(a_1a_2) = 0$,  will
result in a spectral sequence which collapses. 
This is because $\Res ^G _{H_i} I_G R=0$ and
$\Res ^G _{H_i} I_{H_j}R=0 $ if $i\neq j$. 
But the cohomology
$H^*(\cy p; R)$ is not finite-dimensional, and the
restriction of $\CC$ to a proper subgroup is still a finite
projective chain complex, so this gives a contradiction. Hence, we can conclude that
$\gamma $ is injective.

The fact that $\gamma$ is injective gives 
 a short exact sequence of the form 
$$0\to  H^k (G ; R)
\maprt{\gamma} \oplus_{i=0}^{p} H ^{k+n_1-n_{H_i}} (G ; R)
 \to \oplus _p H ^{k +n_1-n_G} (G; R) \to 0,$$
 for every $k> d-n_1$. 
Since $\dim _R H^m (G; R)=m+1$, 
we obtain $$ (k+1)+ p (k +
n_1-n_G+1)= \sum _{i=0} ^{p} (k+n_1-n_{H_i}+1).$$ Cancelling the
$(k+1)$'s and grouping the terms in a different way gives the desired
equality. 
\end{proof}

The next part uses the same spectral sequence, but the details are much simpler.
\begin{lemma}[Borel-Smith, part (iii)]\label{lem:Case (iii)} Let $G=\cy 4$, let $R = \cy 2$,  and let $\CC$ be a finite projective $\RG _G$-complex which is an $R$-homology $\un{n}$-sphere.
If $1\snor K \snor G$ with $K \cong \cy{2}$, then
$\un{n}(1)-\un{n}(K)$ is even.
\end{lemma}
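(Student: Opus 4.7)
The plan is to mirror the argument in Lemma \ref{lem:Case (ii)}, using the hyper-cohomology spectral sequence with coefficients in $I_1 R$, and exploiting the mod-$2$ cohomology of $\cy 4$. Write $n_1 = \un n(1)$, $n_K = \un n(K)$, and $n_G = \un n(G)$. By Lemma \ref{lem:monotone2} we have $n_1 \geq n_K \geq n_G$, and we may assume $n_1 > n_K$; otherwise the conclusion is immediate.

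First, I would set up the hyper-cohomology spectral sequence
\[
E_2^{s,t} = \Ext^s_{R\G_G}(H_t(\widetilde\CC), I_1 R) \Longrightarrow H^{s+t}(\Hom_{R\G_G}(\widetilde\CC, I_1 R)),
\]
whose target vanishes in high degrees because $\widetilde\CC$ is a bounded complex of finitely generated projectives. The reduced homology of $\widetilde\CC$ is supported in degrees $n_1, n_K, n_G$ with values $I_1 R$, $I_K R$, $I_G R$. Let $J_K R$ denote the atomic module with value $R$ at $1$ and at $K$, zero at $G$, and identity restriction. Since $J_K R \cong E_K R$, it has a projective resolution built from $\uR{G/K}$-summands, on which $\Hom_{R\G_G}(-, I_1 R)$ vanishes by Yoneda. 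Combining this with the short exact sequences $0 \to I_1 R \to J_K R \to I_K R \to 0$ and $0 \to J_K R \to \un R \to I_G R \to 0$, together with the projectivity of $\un R$, yields
\[
\Ext^s_{R\G_G}(I_1 R, I_1 R) \cong H^s(G; R), \quad \Ext^s_{R\G_G}(I_K R, I_1 R) \cong H^{s-1}(G; R), \quad \Ext^*_{R\G_G}(I_G R, I_1 R) = 0.
\]
Hence only the rows $t = n_1$ and $t = n_K$ contribute, and the only possibly nonzero differential is $d_r$ with $r = n_1 - n_K + 1$, corresponding to a map $H^s(G; R) \to H^{s + n_1 - n_K}(G; R)$.

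As in Lemma \ref{lem:Case (ii)}, the $\Ext^*_{R\G_G}(I_1 R, I_1 R)$-module structure on the spectral sequence (via Yoneda product, which agrees with cup product in $H^*(G; R)$) identifies this differential with multiplication by a fixed class $\omega \in H^{n_1 - n_K}(G; R)$. For convergence to zero in large total degree, multiplication by $\omega$ must be an isomorphism on $H^*(G; R)$ for all sufficiently large $s$. The main obstacle, and the step where the $\cy 4$-structure really enters, is the resulting ring-theoretic argument: $H^*(\cy 4; \cy 2) \cong \cy 2[a,b]/(a^2)$ with $\deg a = 1$ and $\deg b = 2$, so every odd-degree class has the form $ab^k$, and such a class annihilates the infinite family $\{ab^j : j \geq 0\}$. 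Thus $\omega$ cannot have odd degree, so $n_1 - n_K \equiv 0 \pmod 2$, as required.
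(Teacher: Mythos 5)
Your proof is correct and mirrors the paper's argument step by step: the same hyper-cohomology spectral sequence with $M = I_1 R$, the same short exact sequences giving $\Ext^*(I_G R, I_1 R) = 0$ and $\Ext^m(I_K R, I_1 R) \cong H^{m-1}(G;R)$, and the same conclusion that the surviving differential is cup product with a class that must be a non–zero-divisor in high degrees. The paper phrases the final step by saying $H^*(\cy 4;\cy 2)$ modulo nilpotents is generated in degree $2$, while you write out the ring $\cy 2[a,b]/(a^2)$ explicitly — but these are the same observation.
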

\begin{proof} We consider the spectral sequence
$$E_2 ^{s,t} =\Ext ^s _{\RG _G} (H_t (\CC ), M),  $$
with $M = I_1R$, which converges to $H^{s+t} (\Hom _{\RG _G} (\CC , M ))$. 
Write $n_1 = \un{n}(1)$, $n_K = \un{n}(K)$, and $n_G = \un{n}(G)$.
Once again, the fact that 
$H^k(\CC;M)$ is zero in large dimensions $k > d = \dim \CC(1)$ gives rise to a long exact sequence
$$
\cdots \to \Ext _{\RG _G} ^{k +n_1-n_G+1} (I_G R, M) \maprt{\delta} \Ext^k
_{\RG_G} (I_1 R, M) \maprt{\gamma} \Ext _{\RG _G}
^{k+n_1-n_K+1} (I_{K} R , M)$$
$$ \to \Ext _{\RG _G} ^{k
+n_1-n_G+2} (I_G R, M) \maprt{\delta} \Ext _{\RG _G} ^{k+1} (I_1R, M)
\to \cdots
$$
The analogue of Lemma \ref{lem:Ext-calc for S_i} is easier in this case. We obtain
$$\Ext _{\RG _G}^{m} (I_{K} R , I_1R ) \cong \Ext _{\RG _G}^{m-1} (I_1 R , I_1R
)\cong H^{m-1} (G ; R)$$ for every $m\geq 1$, and 
$\Ext _{\RG _G}^{m} (I_{G} R , I_1R ) = 0$ for every $m \geq 0$. 
The vanishing result follows from the short exact sequence 
$$0 \to J_KR \to \un{R} \to I_GR \to 0$$
and the fact that $\Ext _{\RG _G}^{m} (J_K R , I_1R ) = 0$, for $m \geq 0$, since $J_KR$ has a projective resolution consisting of modules of the form $E_K P$.
On substituting these values into the long exact sequence, we obtain an isomorphism
$$\gamma\colon H^k(G; R) \cong H^{k + n_1 - n_K}(G; R)$$
induced by cup product, for all large $k$.
Since the cohomology ring $H^*(G;R)$ modulo nilpotent elements is generated by a $2$-dimensional class, it follows  that $n_1 - n_K$ must be even. 
\end{proof}

\begin{lemma}[Borel-Smith, part (iv)]\label{lem:Case (iv)}
Let $G=Q_8$, let $R = \cy 2$,  and let $\CC$ be a finite projective $\RG _G$-complex which is an $R$-homology $\un{n}$-sphere.
If $1\snor K \snor G$ with $K \cong \cy{2}$, then
$\un{n}(1)-\un{n}(K)$ is divisible by 4.
\end{lemma}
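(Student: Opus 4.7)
My plan is to adapt the hypercohomology spectral sequence argument from the proof of Lemma~\ref{lem:Case (iii)} to $G = Q_8$. Write $n_H = \un{n}(H)$ for short. The proper subgroups of $Q_8$ are the center $K\cong \cy 2$, which is the unique subgroup of order two, together with three normal subgroups $L_1, L_2, L_3 \cong \cy 4$, each containing $K$. The $E_2$-page
$$E_2^{s,t} = \Ext^s_{\RG_G}(H_t(\CC), I_1 R)$$
converges to $H^{s+t}(\Hom_{\RG_G}(\CC, I_1 R))$, which vanishes for $s+t > \dim \CC$. Since the homology modules $H_t(\CC)$ are supported on those subgroups $H$ with $n_H = t$, non-zero rows occur only at $t \in \{n_1, n_K, n_{L_i}, n_G\}$. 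My goal is to eliminate the rows contributed by $L_i$ and $G$, reduce to the two-row situation of Lemma~\ref{lem:Case (iii)}, identify the remaining non-trivial differential as cup product with a class $u \in H^{n_1 - n_K}(G;R)$, and use the period-four structure of $H^*(Q_8;\bbF_2)$ to conclude that $|u|$ is divisible by $4$.

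The key intermediate step is to prove $\Ext^s_{\RG_G}(I_{L_i} R, I_1 R) = 0$ and $\Ext^s_{\RG_G}(I_G R, I_1 R) = 0$ for every $s \geq 0$, by the same technique as in Lemma~\ref{lem:Ext-calc for S_i}. Introducing the auxiliary modules $J_{L_i} R$ (equal to $R$ on every subgroup contained in $L_i$ with identity restriction maps, and zero elsewhere), the submodule $J_{L_i} R \subseteq \un R$ admits a projective resolution whose chain modules are all of the form $E_{L_i} P$; since
$$\Hom_{\RG_G}(E_H P, I_1 R) = \Hom_{RW_G(H)}(P, I_1 R(H)) = 0$$
for any $H \ne 1$, this gives $\Ext^*(J_{L_i} R, I_1 R) = 0$, and similarly $\Ext^*(\un R, I_1 R) = 0$ since $\un R$ is projective. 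Propagating along the short exact sequences
$$0 \to J_K R \to J_{L_i} R \to I_{L_i} R \to 0, \qquad 0 \to N \to \un R \to I_G R \to 0,$$
where $N$ fits into $0 \to J_K R \to N \to \bigoplus_i I_{L_i} R \to 0$, then yields the claimed vanishings for $I_{L_i} R$ and $I_G R$.

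With only two non-zero rows remaining---at $t = n_1$ with entries $H^s(G;R)$ and at $t = n_K$ with entries $H^{s-1}(G;R)$ by Lemma~\ref{lem:Ext-calc for S_i}---the argument of Case~(iii) applies verbatim. The single potentially non-trivial differential $d_{n_1 - n_K + 1}$ must be an isomorphism in each sufficiently large total degree in order for the spectral sequence to converge to zero, and by the $\Ext^*_{\RG_G}(I_1 R, I_1 R) = H^*(G;R)$-module structure on the spectral sequence (together with the agreement between Yoneda and cup products), this differential is cup product with a class $u \in H^{n_1 - n_K}(G; R)$.

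To conclude, I would invoke the structure of the mod-$2$ cohomology ring: $H^*(Q_8; \bbF_2)$ is generated by two one-dimensional classes $x, y$ (both nilpotent, with $x^3 = y^3 = 0$) together with a four-dimensional periodicity class $P$, so $H^*(Q_8; \bbF_2)/\textrm{nil} \cong \bbF_2[P]$. Multiplication by $u$ remains an isomorphism on this quotient in large degrees; since $\bbF_2[P]$ is concentrated in degrees divisible by four, this forces $n_1 - n_K = |u| \equiv 0 \pmod{4}$. The main technical obstacle will be the Ext-vanishing for $I_{L_i} R$: the required projective resolution of $J_{L_i} R$ is necessarily infinite because the augmentation ideal of $R[W_G(L_i)] \cong R[\cy 2]$ is non-projective in characteristic two, and one must carefully match the values of the natural free module $R[G/L_i^?]$ at the intermediate subgroup $K$ against those of $J_{L_i} R$. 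Degenerate cases in which some of the $n_H$ coincide can be treated by the routine adjustment alluded to in the remark preceding Lemma~\ref{lem:Case (ii)}.
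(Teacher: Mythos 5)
Your proposal is correct and takes essentially the same approach as the paper: the same hypercohomology spectral sequence, the same Ext-vanishing results proved via $E_H P$-type projective resolutions of the intermediate modules $J_K R$ and $J_{L_i} R$, and the same use of the period-four structure of $H^*(Q_8;\bbF_2)$ to conclude. The only difference is organizational: to obtain $\Ext^*_{\RG_G}(I_G R, I_1 R)=0$ you resolve $N=\ker(\un R \to I_G R)$ by $0\to J_K R\to N\to \oplus_i I_{L_i}R\to 0$ (having first established the vanishing for $I_{L_i}R$), whereas the paper uses the surjection $\oplus_i J_{H_i}R\to N$ with kernel $(J_K R)^2$; both are valid and equivalent in content.
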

\begin{proof}
This time we have three index 2 
normal subgroups $H_1, H_2, H_3$,  each isomorphic to $\cy 4$. 
Write $n_1 = \un{n}(1)$, $n_K = \un{n}(K)$, $n_{H_i} = \un{n}(H_i)$, for $1\leq i \leq 3$, and $n_G = \un{n}(G)$. We again consider the spectral sequence
$$E_2 ^{s,t} =\Ext ^s _{\RG _G} (H_t (\CC ), M),  $$
with $M = I_1R$, which converges to $H^{s+t} (\Hom _{\RG _G} (\CC , M ))$. 
The exact sequences 
$$0 \to N \to \un{R} \to I_GR \to 0$$
and
$$0 \to (J_KR)^2 \to \oplus_i J_{H_i}R \to N \to 0$$
 lead to the calculation
$$\Ext _{\RG _G}^{m} (I_{G} R , I_1R ) =0$$
for every $m \geq 0$. The exact sequence
 $$0 \to I_1R \to J_KR \to I_K R \to 0$$
 implies that $\Ext _{\RG _G}^{m} (I_{K} R , I_1R ) = H^{m-1}(G;R)$, for $m \geq 1$. Finally, the exact sequences
 $$0 \to J_KR \to J_{H_i}R \to I_{H_i}R \to 0$$
 show that
 $\Ext _{\RG _G}^{m} (I_{H_i} R , I_1R ) = 0$, for $m \geq 0$ and $1\leq i \leq 3$.

 As a result of these calculations, we again obtain a 3-line spectral sequence with corresponding long exact sequence  $$
\cdots \to \Ext _{\RG _G} ^{k +n_1-n_G+1} (I_G R, M) \maprt{\delta} \Ext^k
_{\RG_G} (I_1 R, M) \maprt{\gamma} \Ext _{\RG _G}
^{k+n_1-n_K+1} (I_{K} R , M)$$
$$ \to \Ext _{\RG _G} ^{k
+n_1-n_G+2} (I_G R, M) \maprt{\delta} \Ext _{\RG _G} ^{k+1} (I_1R, M)
\to \cdots
$$
in all large dimensions $k > d$. By the vanishing result above,  the map 
$$\gamma\colon H^k (G;R) \cong H^{k+n_1 -n_K}(G;R)$$
is an isomorphism induced by cup product.
 Since the cohomology ring $H^*(G;R)$ modulo nilpotent elements is generated by a $4$-dimensional class, it follows  that $n_1 - n_K$ is divisible by 4. 
\end{proof}

\begin{remark}
 The fact that the dimension function of  an algebraic $\un{n}$-homology sphere satisfies the Borel-Smith conditions suggests that more of the classical results on finite group actions on spheres might hold for finite projective chain complexes over a suitable orbit category. For example, one could ask for an algebraic version of the results of Dotzel-Hamrick  \cite{dotzel-hamrick1}
on $p$-groups. Other potential applications of algebraic models to finite group actions are outlined in \cite{grodal-smith1}.
\end{remark}

\begin{example}\label{ex:Qdp}
An important test case for groups acting  on spheres, or on products of spheres \cite{adem-smith1}, is the rank two group $\Qd(p)=(\cy p \times \cy
p)\rtimes SL_2(p)$. At present, it is not known whether $\Qd(p)$ can act freely on a product of  two spheres, but  \" Unl\" u  \cite{unlu1} showed that 
$\Qd(p)$ does not act on a finite complex homotopy equivalent to a sphere with rank one isotropy.

We can apply the Borel-Smith conditions prove an algebraic version of this result. 

\begin{proposition}\label{pro:algQdp}
Let $p$ be an odd prime, $G=\Qd(p)$, $R=\bbZ /p$, and $\cF$ be   the family of all subgroups $H\leq G$ such that $\rk_p (H)\leq 1$. Let $\un{n}$ be a super class function with $\un{n}(1) \geq 0$.
Then, there exists no finite projective chain complex $\CC$ over $\RG _G$ which is an $R$-homology $\un{n}$-sphere.
\end{proposition}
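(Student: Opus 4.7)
The plan is to derive a contradiction by applying Theorem~C to subquotients of a Sylow $p$-subgroup of $G=\Qd(p)$. Since $p$ is odd, such a Sylow $p$-subgroup $P\leq G$ is the extra-special group of order $p^3$ and exponent $p$: it fits as $V\rtimes \langle g\rangle$, where $V\cong \cy p\times \cy p$ is the normal rank-two subgroup of $G$ and $g$ generates a Sylow $p$-subgroup of $SL_2(p)$ (acting on $V$ as the standard unipotent matrix). The center $Z:=Z(P)$ is cyclic of order $p$ and coincides with the fixed line $V^g\leq V$. Because $P$ has exponent $p$, the quotient $P/Z$ is isomorphic to $\cy p\times \cy p$, and the $p+1$ subgroups of order $p$ in $P/Z$ pull back to the $p+1$ maximal elementary abelian subgroups of rank two in $P$.

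Suppose, for contradiction, that such a chain complex $\CC$ exists. By Theorem~C, the function $\un{n}$ satisfies the Borel-Smith conditions at $p$. The family $\cF$ contains no subgroup of $p$-rank~$\geq 2$, so $\un{n}(H)=-1$ for every such $H$; in particular $\un{n}(P)=\un{n}(V)=-1$, and the same holds for each of the $p+1$ maximal rank-two subgroups of $P$. The first key step is to apply the Borel-Smith condition (ii) of Definition~\ref{def:Borel-Smith} to the pair $L=Z$, $K=P$, which is legitimate because $K/L\cong \cy p\times \cy p$. The identity
\begin{equation*}
\un{n}(Z)-\un{n}(P)=\sum_{i=0}^{p}\bigl(\un{n}(L_i)-\un{n}(P)\bigr)
\end{equation*}
then collapses, since every term on the right vanishes, forcing $\un{n}(Z)=-1$.

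The second step is a further application of Borel-Smith~(ii), now to $L=1$, $K=V$. The $p+1$ subgroups of order $p$ in $V$ are permuted transitively by $SL_2(p)\leq G$, and $Z=V^g$ is one of them; hence they are all $G$-conjugate to $Z$ and each has $\un{n}$-value $-1$. Combined with $\un{n}(V)=-1$, the identity becomes $\un{n}(1)+1=0$, that is $\un{n}(1)=-1$, which contradicts the hypothesis $\un{n}(1)\geq 0$. The main obstacle is really the first step: it is what exploits the rank-two subquotient $P/Z$ to force the value $-1$ to propagate from the rank-two subgroups (forced by the family restriction) down to the central order-$p$ subgroup $Z\leq V$. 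Once $\un{n}(Z)=-1$ has been achieved, the $SL_2(p)$-conjugacy of the lines of $V$ spreads the $-1$ across every order-$p$ subgroup of $V$, and the second application of Borel-Smith~(ii) closes the argument.
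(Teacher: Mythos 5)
Your proof is correct and follows essentially the same two-step argument as the paper (itself taken from \"Unl\"u's thesis): first apply Borel-Smith condition (ii) to the subquotient $P/Z(P)\cong \cy p\times\cy p$ of a Sylow $p$-subgroup $P$ to force $\un{n}(Z(P))=-1$, then apply Borel-Smith condition (ii) to a rank-two elementary abelian subgroup to force $\un{n}(1)=-1$. The only cosmetic difference is that you work directly with the normal rank-two subgroup $V$ and note that $SL_2(p)$ permutes its $p+1$ lines transitively, whereas the paper exhibits two Sylow $p$-subgroups $P_1,P_2$ with distinct centers inside $E=P_1\cap P_2$ (which is in fact $V$) and argues by conjugacy of non-central lines; both routes establish that every line of $V$ has $\un{n}$-value $-1$.
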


\begin{proof} We can extend the family $\cF$ to the family $\Sub$ of all subgroups of $G$ by taking $\bC (H)=0$ for all subgroups such that $H \not \in \cF$. For these subgroups, we take $\un{n} (H)=-1$. Observe that by Theorem C, 
the dimension function $\un{n}: \Sub \to \bbZ$ satisfies the Borel-Smith conditions at the prime $p$. 

Now the rest of the argument follows as  in \"Unl\"u  \cite[Theorem 3.3]{unlu1}. Let $P$ be a Sylow $p$-subgroup of $\Qd(p)$. The group $P$ is
isomorphic to the extra-special $p$-group of order $p^3$ and
exponent $p$. If $Z(P)$ is the center of $P$, then the quotient
group $P/Z(P)$ is isomorphic to $\cy{p} \times \cy{p}$. Applying the
Borel-Smith condition (ii) for this quotient, we get $\un {n}(Z(P))=-1$. In $G$, it is possible to find two Sylow $p$-subgroups $P_1 $ and
$P_2$ such that $E=P_1 \cap P_2 \cong \cy{p} \times \cy{p}$ and
$Z(P_1)$ and $Z(P_2)$ are distinct subgroups of order $p$ in $E$.
Two such Sylow $p$-subgroups can be given as $ P_i = (\cy{p} \times
\cy{p} ) \rtimes \la A_i \ra $ for $i=1,2$ where
$$ A_1=\left ( \begin{matrix}  1 & 1 \\
0 & 1  \end{matrix}\right )\quad \quad A_2=\left (\begin{matrix}  1 & 0 \\
1 & 1  \end{matrix}\right )
$$
By the above argument,  $\un{n} (Z(P_i))= -1$, and non-central
$p$-subgroups in $E$ are conjugate to each other. So, we obtain that
$\un{n} (K)=-1$ for every subgroup $K$ of order $p$ in $E$. By the
Borel-Smith conditions applied to $E$, we get $\un{n} (1 )=-1$, contradicting our assumption on $\un{n}$.
\end{proof}

\end{example}

\providecommand{\bysame}{\leavevmode\hbox to3em{\hrulefill}\thinspace}
\providecommand{\MR}{\relax\ifhmode\unskip\space\fi MR }
\providecommand{\MRhref}[2]{%
  \href{http://www.ams.org/mathscinet-getitem?mr=#1}{#2}
}
\providecommand{\href}[2]{#2}

\end{document}